\newcommand{\spacedcdot}{{\,\cdot\,}}
\newcommand{\Q}{{\mathbb{Q}}}
\newcommand{\C}{{\mathbb{C}}}
\newcommand{\R}{{\mathbb{R}}}
\newcommand{\Z}{{\mathbb{Z}}}
\newcommand{\Lie}{{\mathrm{Lie}}}
\newcommand{\GL}{{\mathrm{GL}}}
\newcommand{\Sp}{{\mathrm{Sp}}}
\newcommand{\SL}{{\mathrm{SL}}}
\newcommand{\U}{{\mathrm{U}}}
\DeclareMathOperator{\artanh}{{\mathrm{artanh}}}
\DeclareMathOperator{\tr}{{\mathrm{tr}}}
\newcommand{\calD}{{\mathcal{D}}}
\newcommand{\calH}{{\mathcal{H}}}
\providecommand{\abs}[1]{\left\lvert#1\right\rvert}
\providecommand{\norm}[1]{\left\lVert#1\right\rVert}
\providecommand{\scal}[2]{\left<#1,#2\right>}
\newtheorem{theorem}{Theorem}[section]
\newtheorem{lemma}[theorem]{Lemma}
\newtheorem{proposition}[theorem]{Proposition}
\newtheorem{corollary}[theorem]{Corollary}
\theoremstyle{definition}
\theoremstyle{remark}
\newtheorem*{remark*}{Remark}
\newtheoremstyle{named}{}{}{\itshape}{}{\bfseries}{.}{.5em}{#1 \thmnote{#3}}
\theoremstyle{named}
\numberwithin{equation}{section}
\title[Vector-valued Siegel Poincar\'e series]{Construction and non-vanishing of a family of vector-valued Siegel Poincar\'e series}
\author{Sonja \v Zunar}
\address{Faculty of Geodesy,
	University of Zagreb,
	Ka\v ci\'ceva 26,
	10000 Zagreb,
	Croatia}
\email{szunar@geof.hr}
\subjclass[2020]{11F03, 11F46}
\thanks{This work is supported by the Croatian Science Foundation under the project number HRZZ-IP-2022-10-4615.}
\keywords{Poincar\'e series, vector-valued Siegel modular forms, cuspidal automorphic forms}
\begin{document}

\begin{abstract}
	Using Poincar\'e series of $ K $-finite matrix coefficients of integrable antiholomorphic discrete series representations of $ \Sp_{2n}(\R) $, we construct a spanning set for the space $ S_\rho(\Gamma) $ of Siegel cusp forms of weight $ \rho $ for $ \Gamma $, where $ \rho $ is an irreducible polynomial representation of $ \GL_n(\C) $ of highest weight $ \omega\in\Z^n $ with $ \omega_1\geq\ldots\geq\omega_n>2n $, and $ \Gamma $ is a discrete subgroup of $ \Sp_{2n}(\R) $ commensurable with $ \Sp_{2n}(\Z) $. Moreover, using a variant of Mui\'c's integral non-vanishing criterion for Poincar\'e series on unimodular locally compact Hausdorff groups, we prove a result on the non-vanishing of constructed Siegel Poincar\'e series. 
\end{abstract}

\maketitle

\section{Introduction}

	Although Siegel modular forms have been recognized as a natural generalization of elliptic modular forms since the 1930s \cite{siegel35, siegel39}, the elementary theory of Siegel modular forms, especially the vector-valued ones, remains full of open questions (cf.\ \cite{van_der_geer08}). In recent years, there has been significant progress in constructing examples of vector-valued Siegel modular forms \cite{
		ramakrishnan_shahidi07, 
		clery_van_der_geer15, clery_faber_van_der_geer20}, computing dimensions \cite{taibi17} and proving lifting theorems \cite{chenevier_lannes19, atobe19}
		for spaces of vector-valued Siegel modular forms, as well as in studying critical L-values of vector-valued Siegel cusp forms \cite{kozima00, pitale_saha_schmidt21, horinaga_pitale_saha_schmidt22}. 
	
	In this paper, by adapting representation-theoretic methods developed by G.\ Mui\'c in his work on elliptic modular forms \cite{muic10}, we construct a family of Siegel Poincar\'e series that spans the space $ S_\rho(\Gamma) $ of Siegel cusp forms of weight $ \rho $ for $ \Gamma $, where $ (\rho,V) $ is an irreducible polynomial representation of $ \GL_n(\C) $ of highest weight $ \omega\in\Z^n $ with $ \omega_1\geq\ldots\geq\omega_n>2n $, and $ \Gamma $ is a discrete subgroup of $ \Sp_{2n}(\R) $ that is commensurable with $ \Sp_{2n}(\Z) $, i.e., is such that $ \Gamma\cap\Sp_{2n}(\Z) $ is of finite index in both $ \Gamma $ and $ \Sp_{2n}(\Z) $. In this way, we generalize results of \cite{zunar23} from the scalar-valued case (i.e., the case when $ \rho=\det^m $ for some $ m\in\Z_{>2n} $) to the vector-valued case. This generalization is not straightforward: Quite a few subtleties arise from the fact that the structure of a general irreducible polynomial representation $ \rho $ of $ \GL_n(\C) $ is more complicated than that of a character (see \S \ref{subsec:012} and, e.g., the proof of Lem.\,\ref{lem:026}). Moreover, the intricacies arising from lifting vector-valued Siegel cusp forms to cuspidal automorphic forms on $ \Sp_{2n}(\R) $ are more involved than in the scalar-valued case.

	Let us briefly present our main results. Let $ \calH_n $ denote the Siegel upper half-space of degree $ n\in\Z_{>0} $, i.e., the set of symmetric matrices $ z=x+iy\in M_n(\C) $ whose imaginary part $ y $ is positive definite. Let $ (\rho,V) $ and $ \Gamma $ be as above. We fix a Hermitian inner product $ \scal\spacedcdot\spacedcdot $ on $ V $ with respect to which the restriction $ \rho\big|_{\mathrm{U}(n)} $ is unitary and denote by $ \norm\spacedcdot $ the corresponding norm on $ V $. The space $ S_\rho(\Gamma) $ of Siegel cusp forms for $ \Gamma $ of weight $ \rho $ can be described as the finite-dimensional complex vector space of holomorphic functions $ f:\calH_n\to V $ such that $ \sup_{z\in\calH_n}\norm{\rho\Big(y^{\frac12}\Big)f(z)}<\infty $ and
	\[ f\left((Az+B)(Cz+D)^{-1}\right)=\rho(Cz+D)f(z) \]
	for all $ \begin{pmatrix}A&B\\C&D\end{pmatrix}\in\Gamma $ and $ z\in\calH_n $ (see Lem.\,\ref{lem:026}).
	Let $ \C[X_{r,s}:1\leq r,s\leq n] $ denote the ring of polynomials with complex coefficients in the variables $ X_{r,s} $, where $ r,s\in\left\{1,2,\ldots,n\right\} $. We give a representation-theoretic proof of the following theorem (see Thm.\,\ref{thm:062}).
	
	\begin{theorem}\label{thm:100}
		Let $ n\in\Z_{>0} $. Let $ \Gamma $ be a subgroup of $ \Sp_{2n}(\R) $ commensurable with $ \Sp_{2n}(\Z) $. Let $ (\rho,V) $ be an irreducible polynomial representation of $ \GL_n(\C) $ of highest weight $ \omega\in\Z^n $, where $ \omega_1\geq\ldots\geq\omega_n>2n $. 
		Given $ \mu\in\C[X_{r,s}:1\leq r,s\leq n] $ and $ v\in V $, the Poincar\'e series
		\[ \begin{aligned}
			\left(P_{\Gamma,\rho}f_{\mu,v}\right)(z)=\sum_{\left(\begin{smallmatrix}A&B\\C&D\end{smallmatrix}\right)\in\Gamma}\Bigg(&\mu\bigg(\Big(Az+B-i(Cz+D)\Big)\Big(Az+B+i(Cz+D)\Big)^{-1}\bigg)\\
			&\cdot\rho\left(\frac1{2i}\Big(Az+B+i(Cz+D)\Big)\right)^{-1}v\,\Bigg)
		\end{aligned} \]
		converges absolutely and uniformly on compact subsets of $ \calH_n $. Moreover, we have
		\[ S_\rho(\Gamma)=\mathrm{span}_\C\left\{P_{\Gamma,\rho}f_{\mu,v}:\mu\in\C[X_{r,s}:1\leq r,s\leq n],\,v\in V\right\}. \]
	\end{theorem}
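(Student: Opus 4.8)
The plan is to realize $S_\rho(\Gamma)$ as a space of cuspidal automorphic forms on $\Sp_{2n}(\R)$ and to recognize each Poincaré series $P_{\Gamma,\rho}f_{\mu,v}$ as the image, under this correspondence, of a genuine Poincaré series built from $K$-finite matrix coefficients of an integrable antiholomorphic discrete series representation. Concretely, I would first pass from the upper-half-space picture to the group picture: using the standard lift that sends a Siegel cusp form $f\in S_\rho(\Gamma)$ to a $V$-valued (or, after pairing, scalar) function on $\Sp_{2n}(\R)$ transforming under the maximal compact $K\cong\U(n)$ via $\rho$, one identifies $S_\rho(\Gamma)$ with a space of $\Gamma$-left-invariant, $K$-finite, $\frakz$-finite functions lying in the lowest $K$-type of copies of the antiholomorphic discrete series $\pi$ of Harish--Chandra parameter determined by $\omega$. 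The hypothesis $\omega_n>2n$ is exactly what guarantees that $\pi$ is \emph{integrable}, so that its $K$-finite matrix coefficients are in $L^1(\Sp_{2n}(\R))$; this integrability is what makes the group-level Poincaré series $P_\Gamma\varphi=\sum_{\gamma\in\Gamma}\gamma\cdot\varphi$ converge absolutely for $\varphi$ a $K$-finite matrix coefficient.

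Next I would show that the functions $f_{\mu,v}$ appearing in the statement are precisely (the upper-half-space incarnations of) these $K$-finite matrix coefficients. The key computation is to expand a matrix coefficient $g\mapsto\scal{\pi(g)w}{w'}$ for suitable $K$-finite vectors $w,w'$ in terms of the Harish--Chandra/Cartan coordinates on $\Sp_{2n}(\R)$, and to check that under the lift it becomes exactly the product of a polynomial $\mu$ evaluated at the Cayley-type expression $(Az+B-i(Cz+D))(Az+B+i(Cz+D))^{-1}$ with the automorphy factor $\rho\big(\tfrac{1}{2i}(Az+B+i(Cz+D))\big)^{-1}v$. Here the polynomials $\mu\in\C[X_{r,s}]$ index a spanning set for the relevant $K$-finite vectors, because the lowest $K$-type and its translates under the universal enveloping algebra are captured by polynomial functions on the bounded-domain realization; matching the two parametrizations term-by-term is where one uses the explicit structure of $\rho$ (cf.\ \S\ref{subsec:012}) rather than just a character. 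The absolute and locally uniform convergence on $\calH_n$ then follows from the $L^1$-bound on matrix coefficients together with the properness of the $\Gamma$-action.

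For the spanning statement I would invoke the general principle, going back to the representation-theoretic reformulation of Poincaré series, that projection onto the $\pi$-isotypic component of $L^2(\Gamma\backslash\Sp_{2n}(\R))$ is realized by summing matrix coefficients over $\Gamma$: the map $\varphi\mapsto P_\Gamma\varphi$ intertwines the regular representation and surjects onto the space of cusp forms generating copies of $\pi$. Since $S_\rho(\Gamma)$ corresponds under the lift exactly to the lowest-$K$-type vectors in the $\pi$-isotypic part of the cuspidal spectrum, and since the vectors $\{w_{\mu,v}\}$ span the space of $K$-finite vectors whose matrix coefficients we are summing, the images $\{P_{\Gamma,\rho}f_{\mu,v}\}$ must span all of $S_\rho(\Gamma)$. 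The surjectivity here rests on a density/non-degeneracy argument: the $L^2$-pairing of a cusp form with a Poincaré series of a matrix coefficient reduces (by unfolding) to the pairing of the form with the matrix coefficient itself, and non-vanishing of these pairings for a spanning family of $w_{\mu,v}$ forces any form orthogonal to all $P_{\Gamma,\rho}f_{\mu,v}$ to vanish.

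The main obstacle, I expect, is the second step — establishing the precise dictionary between abstract $K$-finite matrix coefficients and the explicit polynomial-times-automorphy-factor functions $f_{\mu,v}$, and in particular verifying that the polynomials $\mu\in\C[X_{r,s}:1\leq r,s\leq n]$ together with $v\in V$ exhaust (a spanning set of) the needed $K$-finite vectors. In the scalar case of \cite{zunar23} the lowest $K$-type is one-dimensional and this bookkeeping is light, but for general $\rho$ one must track how $\rho$ decomposes the relevant $\U(n)$-isotypic spaces and confirm that no $K$-finite direction is missed; this is the technical heart flagged in the introduction (cf.\ the proof of Lem.\,\ref{lem:026}) and is where the delicate representation theory of $\GL_n(\C)$, as opposed to that of a character, genuinely enters.
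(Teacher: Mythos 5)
Your proposal follows essentially the same route as the paper: lift $S_\rho(\Gamma)$ to $\frakp_\C^-$-annihilated, lowest-$K$-type cuspidal automorphic forms inside the $\pi_\rho^*$-isotypic component of $L^2_{cusp}(\Gamma\backslash\Sp_{2n}(\R))$, invoke integrability (guaranteed by $\omega_n>2n$) together with the Mili\v ci\'c--Mui\'c spanning theorem for Poincar\'e series of $K$-finite matrix coefficients, and then identify those matrix coefficients explicitly with the pairings $v^*F_{f_{\mu,v}}$ of the classical lifts --- which is exactly what the paper does via Lem.\,\ref{lem:058}, Cor.\,\ref{cor:051}, Lem.\,\ref{lem:111}, Lem.\,\ref{lem:040}, and Prop.\,\ref{prop:049}. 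The technical heart you flag (the dictionary between abstract matrix coefficients and the functions $f_{\mu,v}$, and the $K$-type bookkeeping for general $\rho$) is precisely what the paper resolves through the multiplicity-one statement Lem.\,\ref{lem:037}, the highest-weight-vector reduction Cor.\,\ref{cor:106}, and the Schur-lemma computation in Prop.\,\ref{prop:049}.
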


	To prove Thm.\,\ref{thm:100}, we use the classical lift that identifies each function $ f:\calH_n\to V $ with the function $ F_f:\Sp_{2n}(\R)\to V $,
	\[ F_f\left(\begin{pmatrix}A&B\\C&D\end{pmatrix}\right)=\rho(iC+D)^{-1}f\Big((iA+B)(iC+D)^{-1}\Big) \]
	(see also \eqref{eq:047}). Fixing a highest weight vector $ v^{top} $ for $ \rho $, we show that the assignment
	\begin{equation}\label{eq:102}
		f\mapsto \scal{F_f(\spacedcdot)}{v^{top}}
	\end{equation}
	defines a linear isomorphism $ \Lambda_{\rho,\Gamma} $ from $ S_\rho(\Gamma) $ onto a finite-dimensional subspace $ L_\rho(\Gamma) $ of the Hilbert space $ L^2(\Gamma\backslash\Sp_{2n}(\R)) $. More precisely, $ L_\rho(\Gamma) $ is contained in the subspace of $ K $-finite vectors of the $ \pi_\rho^* $-isotypic component $ L^2(\Gamma\backslash\Sp_{2n}(\R))_{[\pi_\rho^*]} $ of the right regular representation of $ \Sp_{2n}(\R) $ on $ L^2(\Gamma\backslash\Sp_{2n}(\R)) $, where $ \left(\pi_\rho^*,\calH^2(\rho)^*\right) $ is a certain integrable antiholomorphic discrete series representation of $ \Sp_{2n}(\R) $. Next, as the chief ingredient of our proof, we apply Mili\v ci\'c's result \cite[Lem.\,6.6]{muic19} on Poincar\'e series of $ K $-finite matrix coefficients of integrable discrete series representations of connected semisimple Lie groups with finite center. By this result, the subspace of $ K $-finite vectors in $ L^2(\Gamma\backslash\Sp_{2n}(\R))_{[\pi_\rho^*]} $ is spanned by the (absolutely and locally uniformly convergent) Poincar\'e series $ P_\Gamma c_{h,h'}:\Sp_{2n}(\R)\to\C $, 
	\[ \left(P_\Gamma c_{h,h'}\right)(g)=\sum_{\gamma\in\Gamma}c_{h,h'}(\gamma g), \]
	where $ h $ and $ h' $ run over the $ K $-finite vectors in $ \calH^2(\rho)^* $, and $ c_{h,h'} $ denotes the $ K $-finite matrix coefficient of $ \pi_\rho^* $ given by 
	\[ c_{h,h'}(g)=\scal{\pi_\rho^*(g)h}{h'}_{\calH^2(\rho)^*},\qquad g\in\Sp_{2n}(\R). \]
	In Cor.\,\ref{cor:106}, we single out a subfamily of matrix coefficients $ c_{h,h'} $ whose Poincar\'e series span $ L_\rho(\Gamma) $, and compute those matrix coefficients in Prop.\,\ref{prop:049} by following the idea of \cite[proof of Lem.\,3-5]{muic10} and employing a few classical Harish-Chandra's results in a suitable way. Thm.\,\ref{thm:100} follows by applying the inverse of the linear isomorphism $ \Lambda_{\rho,\Gamma} $ to the constructed spanning set for $ L_\rho(\Gamma) $.
	
	In Sec.\ \ref{sec:007}, we note that some of the Siegel cusp forms constructed in Thm.\,\ref{thm:100} are related to the well-known reproducing kernel function for $ S_\rho(\Gamma) $. More precisely, equipping the space $ S_\rho(\Gamma) $ with the Petersson inner product
	\[ \scal{f_1}{f_2}_{S_\rho(\Gamma)}=\frac1{\abs{\Gamma\cap\left\{\pm I_{2n}\right\}}}\int_{\Gamma\backslash\calH_n}\scal{\rho\Big(y^{\frac12}\Big)f_1(z)}{\rho\Big(y^{\frac12}\Big)f_2(z)}\,d\mathsf v(z), \]
	where $ d\mathsf v(x+iy)=\det y^{-n-1}\prod_{1\leq r\leq s\leq n}dx_{r,s}\,dy_{r,s}  $,
	we have the following lemma.
	
	\begin{lemma}\label{lem:107}
		Let $ n $, $ \Gamma $, and $ \rho $ be as in Thm.\,\ref{thm:100}. Then, there exists a constant $ c_\rho\in\C^\times $, depending only on $ \rho $, such that 
		\[ \scal{f(iI_n)}v=c_\rho\scal f{P_{\Gamma,\rho}f_{1,v}}_{S_\rho(\Gamma)},\qquad f\in S_\rho(\Gamma),\ v\in V.  \] 
	\end{lemma}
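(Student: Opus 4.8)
The plan is to establish the reproducing property of $P_{\Gamma,\rho}f_{1,v}$ by relating the Petersson pairing on $S_\rho(\Gamma)$ to the $L^2$-inner product on $\Gamma\backslash\Sp_{2n}(\R)$ via the isomorphism $\Lambda_{\rho,\Gamma}$ from \eqref{eq:102}. First I would unwind the definition of $P_{\Gamma,\rho}f_{1,v}$ in the case $\mu=1$: by the discussion following Thm.\,\ref{thm:100}, this Poincar\'e series is $\Lambda_{\rho,\Gamma}^{-1}$ applied to $P_\Gamma c_{h,h'}$ for an appropriate choice of $K$-finite vectors $h,h'\in\calH^2(\rho)^*$, where taking $\mu=1$ should correspond to pairing against the highest weight vector $v^{top}$ (so that $h'$ is essentially the vector representing the functional $\scal\spacedcdot{v^{top}}$, and $h$ encodes the dependence on $v$). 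The key computation of these matrix coefficients is already supplied by Prop.\,\ref{prop:049}.

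Next I would translate the Petersson inner product into the $L^2$-inner product. Using the classical lift $f\mapsto F_f$ and the fact that $d\mathsf v$ is the $\Sp_{2n}(\R)$-invariant measure on $\calH_n\cong\Sp_{2n}(\R)/K$ (up to normalization), one checks that for $f_1,f_2\in S_\rho(\Gamma)$ the integrand $\scal{\rho(y^{1/2})f_1(z)}{\rho(y^{1/2})f_2(z)}$ equals $\scal{F_{f_1}(g)}{F_{f_2}(g)}_V$ at $g$ with $g\cdot iI_n=z$, by the unitarity of $\rho\big|_{\U(n)}$ and the cocycle relation defining $F_f$. Since $\Lambda_{\rho,\Gamma}f=\scal{F_f(\spacedcdot)}{v^{top}}$ picks out a single matrix-coefficient component, I would invoke Schur orthogonality for the discrete series $\pi_\rho^*$ to relate $\scal{F_{f_1}}{F_{f_2}}_V$-type integrals to the scalar $L^2$-pairing of the components $\Lambda_{\rho,\Gamma}f_i$, the proportionality constant being governed by the formal degree of $\pi_\rho^*$; this is the origin of the constant $c_\rho$.

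The decisive step is then the reproducing identity
\[
	\scal{\Lambda_{\rho,\Gamma}f}{P_\Gamma c_{h,h'}}_{L^2(\Gamma\backslash\Sp_{2n}(\R))}=\overline{\scal{(\Lambda_{\rho,\Gamma}f)^{\vee}}{h}}\quad(\text{up to an explicit constant}),
\]
which is the standard unfolding of a Poincar\'e series against an $L^2$-function: one writes the pairing as an integral over $\Gamma\backslash\Sp_{2n}(\R)$ of $\overline{\Lambda_{\rho,\Gamma}f(g)}\,\sum_{\gamma}c_{h,h'}(\gamma g)$, unfolds the sum to an integral of $\overline{\Lambda_{\rho,\Gamma}f(g)}\,c_{h,h'}(g)$ over all of $\Sp_{2n}(\R)$, and evaluates this using that $\Lambda_{\rho,\Gamma}f$ lies in the $\pi_\rho^*$-isotypic component together with the reproducing property of matrix coefficients of discrete series. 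Tracking which $K$-type the vector $h$ occupies, this integral evaluates to a scalar multiple of $\scal{f(iI_n)}{v}$, since the lift $F_f$ at the identity recovers $f(iI_n)$ and the highest weight vector $v^{top}$ implements the pairing with $v$.

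The main obstacle I anticipate is bookkeeping the several normalizing constants so that they coalesce into a single $c_\rho\in\C^\times$ depending only on $\rho$: the formal degree of $\pi_\rho^*$, the normalization of Haar measure on $\Sp_{2n}(\R)$ relative to $d\mathsf v$ on $\calH_n$, and the norm of $v^{top}$ all contribute, and one must verify none of them vanishes and that they are independent of $\Gamma$, $f$, and $v$. The factor $\tfrac1{\abs{\Gamma\cap\{\pm I_{2n}\}}}$ in the Petersson product is exactly what is needed to make the unfolding consistent with the quotient measure on $\Gamma\backslash\Sp_{2n}(\R)$, so I would check that this combinatorial factor cancels correctly. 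Establishing that the resulting constant is nonzero reduces to the non-vanishing of the formal degree, which is automatic for a discrete series representation.
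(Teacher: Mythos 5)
Your overall strategy (transfer the Petersson pairing to $L^2(\Gamma\backslash\Sp_{2n}(\R))$ via the lift, unfold the Poincar\'e series against the cusp form, and evaluate the resulting integral over the whole group) is natural, and its first stages are sound: the lift is unitary (Lem.\,\ref{lem:058}), $F_{P_{\Gamma,\rho}f_{1,v}}=P_\Gamma F_{f_{1,v}}$ (Lem.\,\ref{lem:055}), and unfolding is legitimate because $F_{f_{1,v}}$ is built from integrable matrix coefficients (Prop.\,\ref{prop:049}, using $\omega_n>2n$) while $F_f$ is bounded. The genuine gap is in your decisive step. After unfolding you must evaluate
\begin{equation*}
\int_{\Sp_{2n}(\R)}\scal{F_f(g)}{F_{f_{1,v}}(g)}\,dg=\int_{\calH_n}\scal{\rho(y)f(z)}{f_{1,v}(z)}\,d\mathsf v(z),
\end{equation*}
and you propose to do this by ``Schur orthogonality for $\pi_\rho^*$'' plus ``the reproducing property of matrix coefficients.'' But the orthogonality relations (and formal degrees) concern pairs of matrix coefficients, i.e.\ functions lying in $L^2(\Sp_{2n}(\R))$; the function $F_f$ (equivalently $\Lambda_{\rho,\Gamma}f$) is left $\Gamma$-invariant, hence lies in no $L^p(\Sp_{2n}(\R))$ with $p<\infty$ and is not a matrix coefficient, so Schur orthogonality says nothing about this integral. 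One can instead write it as $\bigl(R\bigl(\overline{c_{h,h'}}\bigr)\Lambda_{\rho,\Gamma}f\bigr)\bigl(1_{\Sp_{2n}(\R)}\bigr)$ and use that convolution by an integrable coefficient acts on the $\pi_\rho^*$-isotypic component by rank-one operators scaled by the formal degree, but then one must still evaluate the resulting automorphic function \emph{at the identity}, which requires explicit knowledge of the intertwiners; in effect, what is needed is the reproducing property of the kernel $f_{1,v}$ for holomorphic $f$ with $\rho\bigl(y^{\frac12}\bigr)f$ bounded --- not merely for $f\in\calH^2(\rho)$ --- and that is precisely the statement being proved, so as written the decisive step assumes the conclusion. (Two secondary slips: the constant relating $\int\scal{F_{f_1}}{F_{f_2}}$ to the scalar pairing of the components $\Lambda_{\rho,\Gamma}f_i$ comes from Schur orthogonality on the compact group $K$ applied to $\sigma_K$, not from the formal degree of $\pi_\rho^*$; and the displayed identity $\scal{\Lambda_{\rho,\Gamma}f}{P_\Gamma c_{h,h'}}=\overline{\scal{(\Lambda_{\rho,\Gamma}f)^{\vee}}{h}}$ does not typecheck, since $(\Lambda_{\rho,\Gamma}f)^{\vee}$ is a function on the group while $h\in\calH^2(\rho)^*$.)

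For comparison, the paper's proof is a two-step citation: by \cite[p.\,10-30]{godement57_10} the reproducing kernel of $S_\rho(\Gamma)$ equals $c_\rho P_{\Gamma,\rho}K_{\rho,z,v}$ with $K_{\rho,z,v}(\zeta)=\rho\bigl(\frac1{2i}(\zeta-\overline z)\bigr)^{-1}v$, and by \eqref{eq:048} one has $K_{\rho,iI_n,v}=f_{1,v}$, so the lemma is the case $z=iI_n$ of the kernel's defining property. Your unfolding computation is essentially how Godement's result is proved, and it can be repaired inside the paper's own framework: Prop.\,\ref{prop:049}\ref{prop:049:1} evaluated at $g=1_{\Sp_{2n}(\R)}$ gives exactly the reproducing identity $\scal{F(iI_n)}{v}=C_\rho^{-1}\scal{F}{f_{1,v}}_{\calH^2(\rho)}$ for $F\in\calH^2(\rho)_K$; writing $f=P_{\Gamma,\rho}f_0$ with $f_0\in\calH^2(\rho)_K$ (Thm.\,\ref{thm:062}) and unfolding so that each term of the $\Gamma$-sum becomes an honest inner product in $\calH^2(\rho)$, one obtains the lemma with $c_\rho=C_\rho^{-1}$. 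But this route uses Thm.\,\ref{thm:062} as input and still requires justifying the interchange of the $\Gamma$-sum with the integral, an absolute-convergence argument your sketch does not address.
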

	
	Finally, in Sec.\ \ref{sec:008}, we turn our attention to the question of non-vanishing of the constructed Poincar\'e series and apply to the vector-valued automorphic forms $ F_{P_{\Gamma,\rho}f_{\mu,v}} $ on $ \Sp_{2n}(\R) $ a vector-valued variant, Prop.\,\ref{prop:063}, of Mui\'c's integral non-vanishing criterion \cite[Thm.\,4.1]{muic09} for Poincar\'e series on unimodular locally compact Hausdorff groups. As a result, in Thm.\,\ref{thm:101}, given a representation $ (\rho,V) $ of $ \GL_n(\C) $ as in Thm.\,\ref{thm:100}, a polynomial $ \mu\in\C[X_{r,s}:1\leq r,s\leq n]\setminus\left\{0\right\} $, and a vector $ v\in V\setminus\left\{0\right\} $, we find an integer $ N_0=N_0(\rho,\mu,v)\in\Z_{\geq3} $ such that for every $ N\in\Z_{\geq N_0} $ and for every subgroup $ \Gamma $ of finite index in $ \Gamma_n(N) $, we have that
	\[ F_{P_{\Gamma,\rho}f_{\mu,v}}\not\equiv0\qquad\text{and}\qquad P_{\Gamma,\rho}f_{\mu,v}\not\equiv0. \]
	Here, $ \Gamma_n(N) $ denotes the principal level $ N $ congruence subgroup of $ \Sp_{2n}(\Z) $, defined by
	\[ \Gamma_n(N)=\left\{\gamma\in\Sp_{2n}(\Z):\gamma\equiv I_{2n}\pmod N\right\}. \]

\section{Preliminaries}

\subsection{Basic notation}\label{subsec:110}

Throughout the paper, let us fix $ n\in\Z_{>0} $. Given a complex square matrix $ A $, we denote by $ \mathrm{tr}(A) $, $ A^\top $, $ \overline A $, and $ A^* $ the trace, the transpose, the conjugate, and the conjugate transpose of $ A $, respectively. Given $ S\subseteq\C $, we denote by $ \mathrm{Diag}_n(S) $ the set of diagonal matrices in $ M_n(\C) $ whose diagonal coefficients belong to $ S $. Let $ i\in\C $ be the imaginary unit.  Given a real Lie algebra $ \mathfrak g $, let $ \mathfrak g_\C $ denote the complex Lie algebra $ \mathfrak g\otimes_\R\C $, and let $ \mathcal Z(\mathfrak g) $ denote the center of the universal enveloping algebra $ \mathcal U(\mathfrak g) $ of $ \mathfrak g_\C $. 

Let $ G $ be a Lie group with Lie algebra $ \mathfrak g $. Given a subspace $ W $ of a left $ \mathfrak g_\C $-module $ V $ and a subset $ S $ of $ \mathfrak g_\C $, we denote by $ W^S $ the subspace of $ S $-invariants in $ W $, i.e.,
\begin{equation}\label{eq:113}
	 W^S=\left\{w\in W:s.w=0\text{ for all }s\in S\right\}.
\end{equation}
Given a maximal compact subgroup $ K $ of $ G $ and a unitary representation $ (\pi,H) $ of $ G $ on a complex Hilbert space $ H $, we denote by $ H_K $ the $ (\mathfrak g,K) $-module of smooth vectors $ v\in H $ that are $ K $-finite, i.e., satisfy $ \dim_\C\mathrm{span}_\C\pi(K)v<\infty $. Moreover, for every irreducible unitary representation $ \tau $ of $ G $, we denote by $ H_{[\tau]} $ the $ \tau $-isotypic component of $ H $, i.e., the closure of the sum of irreducible closed $ G $-invariant subspaces of $ H $ that are equivalent to $ \tau $. Finally, we write $ H^* $ for the dual Hilbert space of $ H $ and let $ \left(\pi^*,H^*\right) $ denote the contragredient representation of $ \pi $, defined by
\[ \pi^*(g)v^*=\left(\pi(g)v\right)^*,\qquad g\in G,\ v\in H, \]
where we use the notation $ v^*=\scal\spacedcdot v_H\in H^* $ for $ v\in H $.  For details on these representation-theoretic notions, see, e.g., \cite[Ch.\,I, \S3]{knapp86} or \cite[Ch.\,1 and \S3.3.5]{wallach88}.

\subsection{Real symplectic group} Let $ J_n=\begin{pmatrix}0&I_n\\-I_n&0\end{pmatrix}\in\GL_{2n}(\C) $. The real symplectic group
\[ \Sp_{2n}(\R)=\left\{g\in\GL_{2n}(\R):g^\top J_ng=J_n\right\} \]
acts transitively on the Siegel upper half-space 
\[ \calH_n=\left\{z=x+iy\in M_n(\C):z^\top=z\text{ and }y>0 \right\} \] 
by
\begin{equation}\label{eq:018}
	 g.z=(Az+B)(Cz+D)^{-1},\qquad g=\begin{pmatrix}A&B\\C&D\end{pmatrix}\in\Sp_{2n}(\R),\ z\in\calH_n. 
\end{equation}
For $ g $ and $ z $ as above, we have
\begin{equation}\label{eq:019}
	\Im(g.z)=(Cz+D)^{-*}y(Cz+D)^{-1}.
\end{equation}

The stabilizer of $ iI_n $ with respect to the action \eqref{eq:018} is the maximal compact subgroup 
\[ K=\Sp_{2n}(\R)\cap \U(2n)=\left\{k_{A+iB}\coloneqq\begin{pmatrix}A&B\\-B&A\end{pmatrix}\in\GL_{2n}(\R):A+iB\in\U(n)\right\} \]
of $ \Sp_{2n}(\R) $. The assignment $ u\mapsto k_u $ defines a Lie group isomorphism $ \U(n)\cong K $. Every $ g\in\Sp_{2n}(\R) $ has the following factorization in $ \Sp_{2n}(\R) $:
\begin{equation}\label{eq:001}
	g=\begin{pmatrix}I_n&x\\0&I_n\end{pmatrix}\begin{pmatrix}y^{\frac12}&0\\0&y^{-\frac12}\end{pmatrix}k,
\end{equation}
where $ x+iy=g.(iI_n) $ and $ k\in K $. 

Let us denote the first two factors on the right-hand side of \eqref{eq:001}, from left to right, by $ n_x $ and $ a_y $. We fix the following Haar measure on $ \Sp_{2n}(\R) $:
\[ \int_{\Sp_{2n}(\R)}f(g)\,dg=\int_{\calH_n}\int_Kf(n_xa_yk)\,dk\,d\mathsf v(x+iy),\qquad f\in C_c(\Sp_{2n}(\R)), \]
where  $ \mathsf v $ is the $ \Sp_{2n}(\R) $-invariant Radon measure on $ \calH_n $ given by 
\[ d\mathsf v(x+iy)=\det y^{-n-1}\prod_{1\leq r\leq s\leq n}dx_{r,s}\,dy_{r,s}, \]
while $ dk $ is the Haar measure on $ K $ such that $ \int_Kdk=1 $. Given a discrete subgroup $ \Gamma $ of $ \Sp_{2n}(\R) $, there exists a unique Radon measure on $ \Gamma\backslash\Sp_{2n}(\R) $ such that
\[ \int_{\Gamma\backslash\Sp_{2n}(\R)}\sum_{\gamma\in\Gamma}f(\gamma g)\,dg=\int_{\Sp_{2n}(\R)}f(g)\,dg,\qquad f\in C_c(\Sp_{2n}(\R)). \]
With respect to this measure, for any finite-dimensional complex Hilbert space $ V $ and $ p\in\R_{\geq1} $, we define in a standard way the Banach space $ L^p(\Gamma\backslash\Sp_{2n}(\R),V) $ of (equivalence classes of) $ p $-integrable functions $ \Gamma\backslash\Sp_{2n}(\R)\to V $.
For all $ f\in C_c(\Gamma\backslash\Sp_{2n}(\R)) $, we have
\begin{equation}\label{eq:025}
	\int_{\Gamma\backslash\Sp_{2n}(\R)}f(g)\,dg=\frac1{\abs{\Gamma\cap\left\{\pm I_{2n}\right\}}}\int_{\Gamma\backslash\calH_n}\int_Kf(n_xa_y k)\,dk\,d\mathsf v(x+iy).
\end{equation}

Let $ \ell=\frac1{2i}\begin{pmatrix}I_n&-iI_n\\I_n&iI_n\end{pmatrix}\in\GL_{2n}(\C) $. The assignments
\[ z\mapsto\ell.z=(z-iI_n)(z+iI_n)^{-1}\qquad\text{and}\qquad w\mapsto\ell^{-1}.w=i(I_n+w)(I_n-w)^{-1} \]
define mutually inverse biholomorphisms between $ \calH_n $ and the bounded symmetric domain
\[ \calD_n=\left\{w\in M_n(\C):w^\top=w\text{ and }I_n-w^*w>0\right\}. \]
For every $ f\in C_c(\calH_n) $, we have
\[ \int_{\calH_n}f(z)\,d\mathsf v(z)=\int_{\calD_n}f(\ell^{-1}.w)d\mathsf v_\calD(w), \]
where, writing $ w=u+iv $ with $ u,v\in M_n(\R) $, $ \mathsf v_\calD $ is the ($ \ell\,\Sp_{2n}(\R)\ell^{-1} $)-invariant Radon measure on $ \calD_n $ given by
\begin{equation}\label{eq:007}
	 d\mathsf v_\calD(w)=2^{n(n+1)}\det(I_n-w^*w)^{-n-1}\prod_{1\leq r\leq s\leq n}du_{r,s}\,dv_{r,s}. 
\end{equation}

For $ r,s\in\left\{1,\ldots,n\right\} $, let $ E_{r,s}\in M_n(\C) $ be the matrix unit with $ (r,s) $th entry $ 1 $, and let $ T_r=-i\begin{pmatrix}&E_{r,r}\\-E_{r,r}\end{pmatrix}\in\mathfrak{gl}_{2n}(\C) $. The abelian Lie algebra
$ \mathfrak h=\sum_{r=1}^ni\R T_r \subseteq\mathfrak{gl}_{2n}(\R) $
 is a Cartan subalgebra of both $ \mathfrak k=\Lie(K) $ and $ \mathfrak g=\Lie(\Sp_{2n}(\R)) $. For every $ r $, we define a linear functional $ e_r\in\left(\mathfrak h_\C\right)^* $ by requiring that $ e_r(T_s)=\delta_{r,s} $ for all $ s $. Moreover, we introduce a strict total order $ \succ $ on $ \sum_{r=1}^n\R e_r $ by requiring that $ \sum_{r=1}^na_re_r\succ\sum_{r=1}^nb_re_r $ if there exists $ s\in\left\{1,\ldots,n\right\} $ such that
 \[ a_r=b_r\text{ for }r<s\qquad\text{and}\qquad a_s>b_s. \]      
With respect to $ \succ $, the set $ \Delta_K^+ $ of positive compact roots and the set $ \Delta_n^+ $ of positive non-compact roots in the root system $ \Delta=\Delta(\mathfrak h_\C:\mathfrak g_\C) $ are given by
\[ \Delta_K^+=\left\{e_r-e_s:1\leq r<s\leq n\right\}\quad\text{and}\quad \Delta_n^+=\left\{e_r+e_s:1\leq r\leq s\leq n\right\} \]
(cf.\ \cite[Ch.\,IX, \S7]{knapp86}). Denoting by $ \mathfrak g_\alpha $ the root subspace of $ \mathfrak g_\C $ corresponding to a root $ \alpha\in \Delta $, we define
\[ \mathfrak k_\C^+=\sum_{\alpha\in\Delta_K^+}\mathfrak g_\alpha,\quad\mathfrak p_\C^+=\sum_{\alpha\in\Delta_n^+}\mathfrak g_\alpha,\quad\mathfrak k_\C^-=\sum_{\alpha\in\Delta_K^+}\mathfrak g_{-\alpha},\quad\text{and}\quad \mathfrak p_\C^-=\sum_{\alpha\in\Delta_n^+}\mathfrak g_{-\alpha}. \]
                                                                                                                                                                                                                                                                                                             
\subsection{Irreducible representations of $ K $}\label{subsec:008}
We recall that, given an irreducible (continuous) representation $ \tau $ of $ K $ on a finite-dimensional complex vector space $ V $, a vector $ v\in V\setminus\left\{0\right\} $ is said to be a weight vector of weight $ \lambda\in\sum_{r=1}^n\Z e_r $  for $ \tau $ if $ \tau(H)v=\lambda(H)v $ for all $ H\in\mathfrak h_\C $. The weight vectors for $ \tau $ span $ V $, and those corresponding to the highest  (with respect to $ \succ $) weight $ \omega $ of $ \tau $ span the one-dimensional subspace $ V^{\mathfrak k_\C^+}=\left\{v\in V:\tau(\mathfrak k_\C^+)v=0 \right\} $. The equivalence classes of irreducible representations $ (\tau,V) $ of $ K $ are bijectively parametrized by their highest  weights $ \omega=\sum_{r=1}^n\omega_re_r $, where $ \omega_1,\ldots,\omega_n\in\Z $ and $ \omega_1\geq\ldots\geq\omega_n $. (See, e.g., \cite[(4.4)]{knapp86} and \cite[Thm.\,4.28]{knapp86}.)

\subsection{Irreducible polynomial representations of $ \GL_n(\C) $}\label{subsec:012}
Given an irreducible polynomial representation  $ \rho $  of $ \GL_n(\C) $ on a finite-dimensional complex vector space $ V $, we will always, applying \cite[Prop.\,1.6]{knapp86}, assume that $ V $ is equipped with a Hermitian inner product  $ \scal\spacedcdot\spacedcdot $  with respect to which the restriction $ \rho\big|_{\U(n)} $ is unitary and denote by $ \norm{\spacedcdot} $ the norm on $ V $. By \cite[\S2]{godement57_5}, we have the following lemma.
\begin{lemma}\label{lem:005}
	Let $ g\in\GL_n(\C) $. Then, we have 
	\begin{equation}\label{eq:002}
		\scal{\rho(g)v_1}{v_2}=\scal{v_1}{\rho\left(g^*\right)v_2},\qquad v_1,v_2\in V.
	\end{equation}
	Moreover, if the matrix $ g $ is positive definite, then so is the operator $ \rho(g) $, and we have $ \rho\big(g^{\frac12}\big)=\rho(g)^{\frac12} $.
\end{lemma}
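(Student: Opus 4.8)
The plan is to reduce the first claim to the operator identity $\rho(g)^*=\rho(g^*)$, where $(\spacedcdot)^*$ on operators denotes the adjoint with respect to $\scal\spacedcdot\spacedcdot$, since \eqref{eq:002} says exactly that $\scal{\rho(g)v_1}{v_2}=\scal{v_1}{\rho(g)^*v_2}=\scal{v_1}{\rho(g^*)v_2}$. I would first record the identity on the compact group: for $u\in\U(n)$ we have $u^*=u^{-1}$, and since $\rho\big|_{\U(n)}$ is unitary, $\rho(u)^*=\rho(u)^{-1}=\rho(u^{-1})=\rho(u^*)$. To propagate this to all of $\GL_n(\C)$, I would introduce the map $\tilde\rho\colon\GL_n(\C)\to\GL(V)$, $\tilde\rho(g)=\rho(g^*)^*$. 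A direct check using $(gh)^*=h^*g^*$ shows that $\tilde\rho$ is a representation, and, because $g\mapsto g^*$ is conjugate-linear and the adjoint on $\mathrm{End}(V)$ is conjugate-linear while $\rho$ is holomorphic (its matrix coefficients are polynomials in the entries of $g$), the composite $\tilde\rho$ is again holomorphic. The computation on $\U(n)$ above says precisely that $\tilde\rho=\rho$ on $\U(n)$.

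The crux is then to upgrade agreement on $\U(n)$ to agreement on all of $\GL_n(\C)$, and I expect this to be the only real obstacle. Here one must be careful: the equality \eqref{eq:002} directly compares the holomorphic function $g\mapsto\scal{\rho(g)v_1}{v_2}$ with the anti-holomorphic function $g\mapsto\scal{v_1}{\rho(g^*)v_2}$, so a naive analytic continuation of \eqref{eq:002} is not available; recasting the claim through the \emph{holomorphic} representation $\tilde\rho$ is exactly what circumvents this. Since $\U(n)$ is a real form of $\GL_n(\C)$ (so that $\frakgl_n(\C)=\fraku(n)\oplus i\,\fraku(n)$), the equality of the holomorphic representations $\rho$ and $\tilde\rho$ on $\U(n)$ forces the equality of their complex-linear differentials $d\rho$ and $d\tilde\rho$ on all of $\frakgl_n(\C)$, whence $\rho$ and $\tilde\rho$ agree on a neighbourhood of $I_n$; by the identity principle and the connectedness of $\GL_n(\C)$ they agree everywhere. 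Taking adjoints in $\rho(g^*)^*=\rho(g)$ then gives $\rho(g^*)=\rho(g)^*$, which is \eqref{eq:002}.

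For the second assertion, suppose that $g$ is positive definite and set $Y=\log g$, the unique Hermitian logarithm, so that $g=\exp Y$ and $g^{\frac12}=\exp\big(\frac12 Y\big)$. Differentiating \eqref{eq:002} at $I_n$ along one-parameter subgroups yields $d\rho(X)^*=d\rho(X^*)$ for all $X\in\frakgl_n(\C)$; in particular $d\rho(Y)$ is self-adjoint because $Y=Y^*$. Consequently $\rho(g)=\exp\big(d\rho(Y)\big)$ is positive definite as the exponential of a self-adjoint operator, and $\rho\big(g^{\frac12}\big)=\exp\big(\frac12 d\rho(Y)\big)=\big(\exp d\rho(Y)\big)^{\frac12}=\rho(g)^{\frac12}$, where the middle equality uses that $\exp\big(\frac12 A\big)$ is the unique positive-definite square root of $\exp A$ for self-adjoint $A$. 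Once the adjoint identity \eqref{eq:002} and its differential are in hand, these last manipulations are routine.
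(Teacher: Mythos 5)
Your proof is correct. It is worth noting, though, that the paper does not prove this lemma at all: it simply cites Godement \cite[\S2]{godement57_5}, so your argument supplies a self-contained proof where the paper defers to a reference. Your route is the standard unitary-trick argument: the key observation that $\tilde\rho(g)=\rho(g^*)^*$ is again a \emph{holomorphic} (indeed polynomial) representation is exactly what makes the extension from $\U(n)$ to $\GL_n(\C)$ legitimate, and your warning that one cannot naively ``analytically continue'' \eqref{eq:002} itself (the right-hand side is antiholomorphic in $g$) identifies precisely the point where a careless argument would fail. The passage via equality of complex-linear differentials on $\fraku(n)\oplus i\,\fraku(n)=\frakgl_n(\C)$, plus connectedness, is sound; one could equally invoke that $\U(n)$ is a maximal-dimensional totally real submanifold of $\GL_n(\C)$ and apply the identity principle directly. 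For the second assertion, your $\exp$/$\log$ argument works, but once \eqref{eq:002} is available there is a shortcut avoiding Lie-algebra differentials altogether: for $g$ positive definite, $g^{\frac12}$ is Hermitian, so $\rho\big(g^{\frac12}\big)$ is self-adjoint by \eqref{eq:002}, and writing $\rho\big(g^{\frac12}\big)=\rho\big(g^{\frac14}\big)^*\rho\big(g^{\frac14}\big)$ shows it is positive semidefinite, hence (being invertible) positive definite; since its square is $\rho(g)$, uniqueness of the positive-definite square root gives both remaining claims at once.
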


Following \cite[\S8.2]{fulton97}, we say that a vector $ v\in V\setminus\left\{0\right\} $ is a weight vector of weight $ \lambda=(\lambda_1,\ldots,\lambda_n)\in\Z^n $ for $ \rho $  if 
\[ \rho(h)v=\left(\prod_{r=1}^nh_r^{\lambda_r}\right)v,\qquad h=\mathrm{diag}(h_1,\ldots,h_n)\in\mathrm{Diag}_n\big(\C^\times\big). \]
There exists an (up to scalar multiples unique) weight vector $ v^{top} $ for $ \rho $ such that $ \rho(B)v^{top}\subseteq\C v^{top} $, where $ B $ denotes the subgroup of upper-triangular matrices in $ \GL_n(\C) $. We say that $ v^{top} $ is a highest weight vector for $ \rho $ and call its weight the highest weight of $ \rho $. 
The equivalence classes of irreducible polynomial representations $ (\rho,V) $ of $ \GL_n(\C) $ are bijectively parametrized by their highest weights $ \omega=(\omega_1,\ldots,\omega_n)\in\Z^n $ such that $ \omega_1\geq\ldots\geq\omega_n\geq0 $ \cite[\S8.2, Thm.\,2(1)]{fulton97}. Moreover, by  \cite[\S8.1, Ex.\ 3]{fulton97} we have the following lemma.

\begin{lemma}\label{lem:004}
	Let $ \omega $ be the highest weight of an irreducible polynomial representation $ (\rho,V) $ of $ \GL_n(\C) $. Then, there exists a basis $ (e_T)_{T} $ of the vector space $ V $, indexed by the Young tableaux $ T $ of shape $ \omega $ with entries in $ \left\{1,2,\ldots,n\right\} $, with the following property: for each $ T $, $ e_T $ is a weight vector for $ \rho $ of weight $ \lambda=(\lambda_1,\ldots,\lambda_n)\in\left(\Z_{\geq\omega_n}\right)^n $, where $ \lambda_r $ is the number of times the integer $ r $ occurs in $ T $. 
\end{lemma}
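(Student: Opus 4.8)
The plan is to realize $V$ via Fulton's Schur-module construction and read the claimed weight basis off the combinatorics of tableaux; equivalently—and more economically—one can deduce the existence of such a basis from the character of $\rho$. First I would recall that, since $\omega_1\geq\cdots\geq\omega_n\geq0$, the highest weight $\omega$ is a partition with at most $n$ parts, and that the formal character of $\rho$ on the diagonal torus $\Diag_n(\C^\times)$, namely the function $\diag(h_1,\ldots,h_n)\mapsto\tr\rho(\diag(h_1,\ldots,h_n))$ viewed as a polynomial in $h_1,\ldots,h_n$, equals the Schur polynomial $s_\omega$. By the tableau definition of the Schur polynomial,
\[ s_\omega(h_1,\ldots,h_n)=\sum_T\prod_{r=1}^nh_r^{\lambda_r(T)}, \]
where $T$ runs over the Young tableaux of shape $\omega$ with entries in $\left\{1,\ldots,n\right\}$ and $\lambda_r(T)$ denotes the number of times $r$ occurs in $T$.

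Comparing, for a fixed $\lambda=(\lambda_1,\ldots,\lambda_n)$, the coefficient of the monomial $\prod_rh_r^{\lambda_r}$ on both sides shows that the dimension of the weight space $V_\lambda=\left\{v\in V:\rho(h)v=(\prod_rh_r^{\lambda_r})v\text{ for all }h\in\Diag_n(\C^\times)\right\}$ equals the number of tableaux $T$ of content $\lambda$ (the Kostka number). Hence, choosing for each occurring weight $\lambda$ an arbitrary basis of $V_\lambda$ and putting it in bijection with the set of tableaux of content $\lambda$, then taking the union over $\lambda$, yields a weight basis $(e_T)_T$ of $V$ indexed precisely by the tableaux of shape $\omega$, with $e_T$ of weight $\lambda(T)$. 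Alternatively—and this is what \cite[\S8.1, Ex.\ 3]{fulton97} supplies directly—Fulton's explicit realization of $V$ inside $(\C^n)^{\otimes|\omega|}$ produces a canonical such basis, each $e_T$ being assembled from the standard basis vectors of $\C^n$ according to the entries of $T$ and hence visibly an eigenvector for $\Diag_n(\C^\times)$ with eigencharacter $\prod_rh_r^{\lambda_r(T)}$.

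It remains to verify the bound $\lambda(T)\in\left(\Z_{\geq\omega_n}\right)^n$. If $\omega_n=0$ this is vacuous, so assume $\omega_n\geq1$; then the Young diagram of $\omega$ has exactly $n$ rows, and each of its first $\omega_n$ columns has length $n$. Since the entries of $T$ strictly increase down each column (by the definition of a tableau) and lie in $\left\{1,\ldots,n\right\}$, every such column must equal $(1,2,\ldots,n)^\top$; consequently each integer $r\in\left\{1,\ldots,n\right\}$ occurs at least once in each of these $\omega_n$ columns, so $\lambda_r(T)\geq\omega_n$ for every $r$.

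The main obstacle is the first step: justifying that the character of $\rho$ is the Schur polynomial $s_\omega$ and, if one wants the canonical tableau-indexed basis rather than an arbitrary basis of each weight space, that Fulton's construction genuinely yields torus eigenvectors with the contents of the indexing tableaux as eigencharacters. These are classical facts recorded in \cite[\S8.1, Ex.\ 3]{fulton97}; granting them, the passage to a weight basis and the verification of the weight bound are elementary.
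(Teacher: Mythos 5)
Your proposal is correct, but it is worth noting that the paper does not really ``prove'' this lemma at all: it simply cites \cite[\S8.1, Ex.\ 3]{fulton97}, i.e.\ Fulton's explicit realization of the Schur module, whose canonical basis $(e_T)_T$ is indexed by tableaux and consists of torus eigenvectors with eigencharacter given by the content of $T$ --- this is exactly your ``alternative'' route. Your principal argument is genuinely different and more self-contained: you only use the classical fact that the character of $\rho$ restricted to $\Diag_n(\C^\times)$ is the Schur polynomial $s_\omega$, expand $s_\omega$ as a sum over tableaux, and match weight-space dimensions with Kostka numbers to manufacture a (non-canonical) tableau-indexed weight basis. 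This buys independence from the explicit Schur-module construction at the cost of producing an arbitrary rather than canonical basis --- which is harmless, since the lemma only asserts existence --- and it implicitly uses that the torus acts semisimply on $V$ (standard here, e.g.\ because $\rho\big|_{\U(n)}$ is unitary and $\rho$ is polynomial, so compact-torus weight decompositions extend to $\Diag_n(\C^\times)$ by Zariski density). You also supply a detail the paper leaves buried in the citation: the verification that every content $\lambda(T)$ lies in $\left(\Z_{\geq\omega_n}\right)^n$, via the observation that when $\omega_n\geq1$ each of the first $\omega_n$ columns has length $n$ and strictly increasing entries in $\left\{1,\ldots,n\right\}$, hence equals $(1,2,\ldots,n)^\top$; this column argument is exactly right and makes the statement's weight bound transparent rather than an act of faith in the reference.
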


	In the following lemma, we associate with a $ \GL_n(\C) $-representation $ (\rho,V) $ as above two $ K $-representations, $ \rho_K $ and $ \sigma_K $.

\begin{lemma}\label{lem:012}
	Let $ (\rho,V) $ be an irreducible polynomial representation of $ \GL_n(\C) $, and let $ \omega $ denote its highest weight. 
	\begin{enumerate}[label=\textup{(\roman*)}]
		\item\label{lem:012:1} The representation $ (\rho_K, V) $ of $ K $ defined by
		\[ \rho_K(k_u)=\rho(u),\qquad u\in\U(n), \]
		is an irreducible unitary representation of $ K $ of highest weight $ \sum_{r=1}^n\omega_re_r $.
		\item\label{lem:012:2} The representation $ (\sigma_K,V) $ of $ K $ defined by
		\begin{equation}\label{eq:024}
			\sigma_K(k_u)=\rho(\overline u),\qquad u\in\U(n),
		\end{equation}
		 is an irreducible unitary representation of $ K $ of highest weight $ -\sum_{r=1}^n\omega_{n+1-r}e_r $ and is equivalent to the contragredient representation $ (\rho_K^*,V^*) $.
		 \item\label{lem:012:3} If $ v^{top}\in V $ is a highest weight vector for $ \rho $, then $ v^{top} $ is also a highest weight vector for $ \rho_K $, and $ \left(v^{top}\right)^*=\scal\spacedcdot{v^{top}}\in V^* $ is a highest weight vector for $ \sigma_K^* $. 
	\end{enumerate}
\end{lemma}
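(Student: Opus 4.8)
The plan is to reduce everything to a single Lie-algebra computation that identifies the Cartan and the positive compact root spaces of $\frakk_\C$ with the diagonal and the strictly upper-triangular matrices in $\frakgl_n(\C)$. First I would differentiate the isomorphism $u\mapsto k_u$ to obtain a Lie-algebra isomorphism $\phi\colon\fraku(n)\xrightarrow{\sim}\frakk$, and check by direct computation that $\phi(iE_{r,r})=iT_r$; complexifying gives $\phi_\C(E_{r,r})=T_r$. Since $\rho=\rho_K\circ(u\mapsto k_u)$, this yields $d\rho_K(T_r)=d\rho(E_{r,r})$, so a $\GL_n(\C)$-weight vector of weight $\lambda=(\lambda_1,\dots,\lambda_n)$ is a $K$-weight vector of weight $\sum_r\lambda_re_r$ for $\rho_K$, and, by the same computation applied to $\overline u$, of weight $-\sum_r\lambda_re_r$ for $\sigma_K$. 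Matching $e_r\leftrightarrow\epsilon_r$, where $\epsilon_r$ is dual to $E_{r,r}$, the compact positive roots $e_r-e_s$ ($r<s$) correspond to $\epsilon_r-\epsilon_s$, so $\frakk_\C^+$ is carried onto the strictly upper-triangular subalgebra $\frakn^+=\sum_{r<s}\C E_{r,s}$ of $\frakgl_n(\C)$.

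With this dictionary, part \ref{lem:012:1} is immediate: $\rho_K$ is unitary because $\rho\big|_{\U(n)}$ is, and it is irreducible because any $\U(n)$-invariant subspace of $V$ is invariant under $\fraku(n)$, hence under $\frakgl_n(\C)=\fraku(n)\oplus i\,\fraku(n)$, and therefore under the connected group $\GL_n(\C)$. For the highest weight and the first assertion of \ref{lem:012:3}, I would note that the defining property $\rho(B)v^{top}\subseteq\C v^{top}$ gives $d\rho(\frakn^+)v^{top}\subseteq\C v^{top}$; since $d\rho(E_{r,s})v^{top}$ has weight $\omega+\epsilon_r-\epsilon_s\neq\omega$ for $r<s$, this forces $d\rho(\frakn^+)v^{top}=0$, which under the dictionary reads $d\rho_K(\frakk_\C^+)v^{top}=0$. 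Thus $v^{top}$ is a $\rho_K$-highest weight vector, of weight $\sum_r\omega_re_r$.

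For part \ref{lem:012:2} I would first observe that $u\mapsto\overline u$ is an automorphism of $\U(n)$, so $\sigma_K$ is a well-defined representation, unitary because $\overline u\in\U(n)$, and irreducible because it is the irreducible $\rho\big|_{\U(n)}$ precomposed with that automorphism. Its weights are the negatives of the weights of $\rho$, so its highest weight comes from the lowest $\GL_n(\C)$-weight of $\rho$, namely $(\omega_n,\dots,\omega_1)$, giving highest $\sigma_K$-weight $-\sum_r\omega_{n+1-r}e_r$. The equivalence $\sigma_K\cong\rho_K^*$ then follows from the classification recalled in \S\ref{subsec:008}: the contragredient of an irreducible $K$-representation of highest weight $\sum_r\omega_re_r$ has highest weight $-w_0\!\left(\sum_r\omega_re_r\right)=-\sum_r\omega_{n+1-r}e_r$, where $w_0$ reverses the coordinates, which matches $\sigma_K$.

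Finally, for the second assertion of \ref{lem:012:3}, I would use the paper's convention $\sigma_K^*(k_u)(v^{top})^*=\left(\sigma_K(k_u)v^{top}\right)^*$ together with the anti-linearity of $v\mapsto v^*$: since $v^{top}$ has $\sigma_K$-weight $-\sum_r\omega_re_r$, conjugation reverses the sign and $(v^{top})^*$ has $\sigma_K^*$-weight $\sum_r\omega_re_r$. As $\sigma_K^*\cong\rho_K$ is irreducible of that highest weight, the nonzero weight vector $(v^{top})^*$ spans the one-dimensional top weight space and is therefore a highest weight vector. The only real obstacle here is bookkeeping: keeping straight the three sign reversals—conjugation in $\sigma_K$, the longest Weyl element $w_0$ in the contragredient, and the anti-linear $*$—and nailing down once and for all the normalization $\phi(iE_{r,r})=iT_r$ so that the $\frakgl_n(\C)$- and $K$-weight labelings are mutually consistent.
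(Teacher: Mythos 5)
Your proof is correct and follows essentially the same route as the paper's: transfer the $\GL_n(\C)$-weights to $K$-weights for $\rho_K$, $\sigma_K$, and their contragredients, then conclude via the classification of irreducible $K$-representations by their highest weights. The only cosmetic differences are that where the paper reads off the relevant extreme weight $(\omega_n,\dots,\omega_1)$ from the Young-tableau description of the weights (Lem.~\ref{lem:004}), you invoke the standard facts that the lowest weight is $w_0\omega$ and that the contragredient has highest weight $-w_0\lambda$, and your explicit Lie-algebra dictionary and unitary-trick irreducibility argument spell out what the paper compresses into ``direct computation.''
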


\begin{proof}
	The representations $ \rho_K $, $ \sigma_K $, $ \rho_K^* $, and $ \sigma_K^* $ are irreducible and unitary since $ \rho\big|_{\mathrm U(n)} $ is. 
	Moreover, one shows by direct computation that if $ v\in V $ is a weight vector for $ \rho $  of weight $ \lambda\in\big(\Z_{\geq\omega_n}\big)^n $, then $ v $ is a weight vector for $ \rho_K $  of weight $ \sum_{r=1}^n\lambda_re_r $  and a weight vector for $ \sigma_K $ of weight $ -\sum_{r=1}^n\lambda_re_r $, and $ v^*=\scal\spacedcdot v\in V^* $ is a weight vector for $ \rho_K^* $ of weight $ -\sum_{r=1}^n\lambda_re_r $ and a weight vector for $ \sigma_K^* $ of weight $ \sum_{r=1}^n\lambda_re_r $. This implies the lemma by the description of weights of $ \rho $ given in Lem.\,\ref{lem:004} and the characterization of equivalence classes of irreducible representations of $ K $ by their highest weights (see \S\ref{subsec:008}).
\end{proof}

\section{Holomorphic and antiholomorphic discrete series of $ \Sp_{2n}(\R) $}

Let $ (\rho,V) $ be an irreducible polynomial representation of $ \GL_n(\C) $ of highest weight $ \omega=(\omega_1,\ldots,\omega_n)\in\Z^n $, where $ \omega_1\geq\ldots\geq\omega_n\geq0 $. As always, we assume that the restriction $ \rho\big|_{\mathrm U(n)} $ is unitary (see \S\ref{subsec:012}).

The group $ \Sp_{2n}(\R) $ acts on the right on the space $ V^{\calH_n} $ of functions $ \calH_n\to V $ by
\[ \left(f\big|_\rho g\right)(z)=\rho(Cz+D)^{-1}f(g.z) \]
for all $ f\in V^{\calH_n} $, $ g=\begin{pmatrix}A&B\\C&D\end{pmatrix}\in\Sp_{2n}(\R) $, and $ z\in\calH_n $.
It follows from \eqref{eq:019} and \eqref{eq:002} that
\begin{equation}\label{eq:020}
	\norm{\rho\Big(y^{\frac12}\Big)\left(f\big|_\rho g\right)(z)}=\norm{\rho\left(\Im(g.z)^{\frac12}\right)f(g.z)}
\end{equation}
for all $ f\in V^{\calH_n} $, $ g\in\Sp_{2n}(\R) $, and $ z=x+iy\in\calH_n $ (see, e.g., \cite[\S3]{weissauer83}).

Suppose that $ \omega_n>n $. Let $ \calH^2(\rho) $ be the Hilbert space of holomorphic functions $ f:\calH_n\to V $ such that
\[ \int_{\calH_n}\norm{\rho\Big(y^{\frac12}\Big)f(z)}^2\,d\mathsf v(z)<\infty, \]
with the inner product
\[ \begin{aligned}
	\scal{f_1}{f_2}_{\calH^2(\rho)}&\overset{\phantom{\eqref{eq:002}}}=\int_{\calH_n}\scal{\rho\Big(y^{\frac12}\Big)f_1(z)}{\rho\Big(y^{\frac12}\Big)f_2(z)}\,d\mathsf v(z)\\
	&\overset{\eqref{eq:002}}=\int_{\calH_n}\scal{\rho(y)f_1(z)}{f_2(z)}\,d\mathsf v(z).
\end{aligned} \]
Let $ (\pi_\rho,\calH^2(\rho)) $ be the unitary representation of $ \Sp_{2n}(\R) $ defined by 
\[ \pi_\rho(g)f=f\big|_\rho g^{-1},\qquad g\in\Sp_{2n}(\R),\ f\in\calH^2(\rho). \] 
The representations $ (\pi_\rho,\calH^2(\rho)) $ comprise the holomorphic discrete series of $ \Sp_{2n}(\R) $ (see, e.g., \cite[\S3]{gelbart73}).
 
The assignment $ f\mapsto f\big|_\rho\ell^{-1} $ defines a unitary equivalence of $ \Sp_{2n}(\R) $-representations $ (\pi_\rho,\mathcal H^2(\rho)) $ and $ (\pi_\rho^{\ell},\calD^2(\rho)) $, where $ \calD^2(\rho) $ is the Hilbert space of holomorphic functions $ f:\calD_n\to V $ such that
\begin{equation}\label{eq:004}
	\int_{\calD_n}\norm{\rho\left(I_n-w^*w\right)^{\frac12}f(w)}^2\,d\mathsf v_{\calD}(w)<\infty, 
\end{equation}
equipped with the inner product
\begin{equation}\label{eq:117}
	\scal{f_1}{f_2}_{\calD^2(\rho)}=\int_{\calD_n}\scal{\rho\left(I_n-w^*w\right)^{\frac12}f_1(w)}{\rho\left(I_n-w^*w\right)^{\frac12}f_2(w)}\,d\mathsf v_{\calD}(w),
\end{equation}
and
\[ \pi_\rho^\ell(g)f=f\big|_\rho \ell\,g^{-1}\ell^{-1},\qquad g\in\Sp_{2n}(\R),\ f\in\calD^2(\rho). \]

Let $ \C[X_{r,s}:1\leq r,s\leq n] $ denote the ring of polynomials with complex coefficients in the $ n^2 $ variables $ X_{r,s} $, where $ r,s\in\left\{1,2,\ldots,n\right\} $. Every $ \mu\in\C[X_{r,s}:1\leq r,s\leq n] $ can be evaluated at any matrix $ z\in M_n(\C) $ in an obvious way; in particular, for every $ v\in V $, we can define a function $ p_{\mu,v}:\calD_n\to V $,
\begin{equation}\label{eq:003}
	 p_{\mu,v}(w)=\mu(w)v. 
\end{equation}
For each $ d\in\Z_{\geq0} $, let $ \calD^2(\rho)_d $ denote the linear span of functions $ p_{\mu,v} $, where $ \mu $ runs over the degree $ d $ homogeneous polynomials in $ \C[X_{r,s}:1\leq r,s\leq n] $ and $ v $ runs through $ V $. 

\begin{lemma}\label{lem:010}
	\begin{enumerate}[label=\textup{(\roman*)}]
		\item\label{lem:010:1} The $ (\mathfrak g,K) $-module $ \calD^2(\rho)_K $ of $ K $-finite vectors in $ \calD^2(\rho) $ has the following decomposition into a direct sum of mutually orthogonal finite-dimensional $ K $-invariant subspaces:
		\begin{equation}\label{eq:009}
			\calD^2(\rho)_K=\bigoplus_{d=0}^\infty\calD^2(\rho)_d. 
		\end{equation}
		\item\label{lem:010:2} Denoting $ \calD^2(\rho)_{-1}=0 $, we have
		\[ \pi_\rho^\ell\left(\mathfrak p_\C^{\pm}\right)\calD^2(\rho)_r\subseteq\calD^2(\rho)_{r\mp1},\qquad r\in\Z_{\geq0}. \] 
		\item\label{lem:010:3} The assignment $ v\mapsto p_{1,v} $ defines an equivalence of $ K $-representations $ \left(\sigma_K,V\right) $ and $ \calD^2(\rho)_0 $.
	\end{enumerate}
	
\end{lemma}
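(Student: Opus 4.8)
The plan is to reduce the entire lemma to one explicit computation of the $ K $-action on $ \calD^2(\rho) $. Writing $ \abs\omega=\omega_1+\dots+\omega_n $, I would first conjugate a generic $ k_u $ ($ u\in\U(n) $) by the Cayley element, obtaining $ \ell k_u\ell^{-1}=\left(\begin{smallmatrix}u&0\\0&\overline u\end{smallmatrix}\right) $; since the lower-left block vanishes, the automorphy factor is read off immediately and
\[ \left(\pi_\rho^\ell(k_u)f\right)(w)=\rho(\overline u)\,f\!\left(u^{-1}w\,\overline u\right),\qquad u\in\U(n),\ f\in\calD^2(\rho),\ w\in\calD_n. \]
Specializing to the central circle $ u=\zeta I_n $ ($ \abs\zeta=1 $) and using that the scalar matrix $ \zeta I_n $ acts on $ V $ by $ \zeta^{\abs\omega} $ gives the key formula $ \left(\pi_\rho^\ell(k_{\zeta I_n})f\right)(w)=\overline\zeta^{\,\abs\omega}f\!\left(\overline\zeta^{\,2}w\right) $. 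Part \ref{lem:010:3} is then immediate: $ \calD^2(\rho)_0 $ is the space of constant functions $ w\mapsto v $, the map $ v\mapsto p_{1,v} $ is a linear bijection onto it, and the displayed formula yields $ \pi_\rho^\ell(k_u)p_{1,v}=p_{1,\rho(\overline u)v}=p_{1,\sigma_K(k_u)v} $, so it intertwines $ \sigma_K $ with $ \pi_\rho^\ell\big|_{\calD^2(\rho)_0} $.

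For \ref{lem:010:1} I would argue in two directions. Because a linear substitution $ w\mapsto u^{-1}w\,\overline u $ preserves the degree of a homogeneous polynomial, the displayed $ K $-action shows that each $ \calD^2(\rho)_d $ is $ K $-invariant; being finite-dimensional, it lies in $ \calD^2(\rho)_K $. The central formula shows that $ k_{\zeta I_n} $ acts on $ \calD^2(\rho)_d $ by the scalar $ \overline\zeta^{\,2d+\abs\omega} $, and these characters of $ \U(1) $ are pairwise distinct for distinct $ d $; unitarity of $ \pi_\rho^\ell $ then forces the subspaces $ \calD^2(\rho)_d $ to be mutually orthogonal, so their sum is direct.

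For the reverse inclusion I would expand a holomorphic $ f\in\calD^2(\rho) $ in its Taylor series about $ 0\in\calD_n $: its degree-$ d $ homogeneous part is precisely the $ \U(1) $-isotypic projection $ f_d=\int_{\U(1)}\zeta^{\,2d+\abs\omega}\,\pi_\rho^\ell(k_{\zeta I_n})f\,d\zeta $, which lies in $ \calD^2(\rho) $ and, being holomorphic and homogeneous of degree $ d $, is a $ V $-valued polynomial, hence an element of $ \calD^2(\rho)_d $. A $ K $-finite vector is $ \U(1) $-finite, so only finitely many $ f_d $ are nonzero, giving $ f\in\bigoplus_d\calD^2(\rho)_d $. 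The main obstacle is exactly here: one must know that each homogeneous component $ f_d $ is itself square-integrable against the weight $ \rho(I_n-w^*w) $—so that it genuinely belongs to $ \calD^2(\rho)_d $ rather than being a merely formal homogeneous piece—which is where the hypothesis $ \omega_n>n $ enters. Realizing $ f_d $ as the bounded isotypic projection of $ f $ is what makes this rigorous while simultaneously identifying it with the homogeneous Taylor part.

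For \ref{lem:010:2} I would use the central generator $ Z=J_n=i\sum_{r=1}^nT_r\in\frakh $, for which $ k_{e^{i\theta}I_n}=\exp(\theta Z) $. By the central formula, $ d\pi_\rho^\ell(Z) $ acts on $ \calD^2(\rho)_d $ by $ -i\!\left(2d+\abs\omega\right) $, while $ \frakp_\C^{\pm}=\sum_{\alpha\in\Delta_n^+}\frakg_{\pm\alpha} $ with $ \alpha(Z)=2i $ for $ \alpha\in\Delta_n^+ $, so $ [Z,X]=\pm2i\,X $ for $ X\in\frakp_\C^{\pm} $. For $ f\in\calD^2(\rho)_r $, the relation $ d\pi_\rho^\ell(Z)d\pi_\rho^\ell(X)f=d\pi_\rho^\ell(X)d\pi_\rho^\ell(Z)f+d\pi_\rho^\ell([Z,X])f $ then exhibits $ d\pi_\rho^\ell(X)f $ as a $ Z $-eigenvector with eigenvalue $ -i\!\left(2(r\mp1)+\abs\omega\right) $; since the $ K $-finite vectors form a $ (\frakg,K) $-module, $ d\pi_\rho^\ell(X)f\in\calD^2(\rho)_K=\bigoplus_d\calD^2(\rho)_d $, and matching this eigenvalue against part \ref{lem:010:1} places it in $ \calD^2(\rho)_{r\mp1} $, as claimed.
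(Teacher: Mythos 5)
Your proof is correct, and for parts (i) and (iii) it follows essentially the same route as the paper: the same key formula $\left(\pi_\rho^\ell(k_u)f\right)(w)=\rho(\overline u)\,f\left(u^{-1}w\,\overline u\right)$ (the paper writes $\rho\left(u^{-\top}\right)$, which equals $\rho(\overline u)$ for unitary $u$), orthogonality via the circle $\left\{k_{\zeta I_n}\right\}$, and completeness via the Taylor/isotypic decomposition --- which is exactly the generalization of Lang's $n=1$ argument that the paper invokes; your appeal to the boundedness of the isotypic projection plays the role of the convergence results from Klingen that the paper cites, though you still implicitly use that norm convergence in $\calD^2(\rho)$ controls pointwise values in order to identify the Bochner integral with the homogeneous Taylor part. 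The genuine difference is part (ii): the paper disposes of it as a ``direct computation'' (i.e., computing how $\frakp_\C^\pm$ acts by differential operators and checking that this shifts polynomial degree by $\mp1$), whereas you deduce it structurally from eigenvalue bookkeeping for the central element $Z=J_n$ of $\frakk$, using $[Z,X]=\pm2iX$ for $X\in\frakp_\C^\pm$. This avoids any differential-operator computation, at the cost of invoking that $\calD^2(\rho)_K$ is stable under $\frakg_\C$ (so that $d\pi_\rho^\ell(X)f$ lands back in $\bigoplus_d\calD^2(\rho)_d$), which is legitimate since $\pi_\rho^\ell$ is irreducible unitary, hence admissible. One small imprecision: the hypothesis $\omega_n>n$ is not what makes your components $f_d$ square-integrable --- that is automatic from the bounded-projection realization; rather, it is what makes the spaces $\calD^2(\rho)_d$ (i.e., all polynomials) lie in $\calD^2(\rho)$ in the first place, so it is needed for the inclusion $\supseteq$ in \eqref{eq:009}, via the finiteness of the relevant Hua-type integrals.
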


\begin{proof}
	This is well-known (cf.\ \cite[Ch.\,VIII, \S2]{knapp86}). A proof of \ref{lem:010:1} in the case when $ n=1 $ can be found in \cite[Ch.\,IX, \S2, Thm.\,3]{lang85} and  generalized easily to the case when $ n>1 $ using the convergence properties of power series of holomorphic functions $ \calD_n\to\C $ described in \cite[Ch.\,III, \S6, (6)]{klingen90}.
	For example, to prove that the subspaces $ \calD^2(\rho)_d $ are $ K $-invariant, it suffices to note that for all $ \mu\in\C[X_{r,s}:1\leq r,s,\leq n] $, $ v\in V $, and $ u\in\mathrm U(n) $, we have
	\[ \left(\pi_\rho^\ell(k_u)p_{\mu,v}\right)(w)=\mu\left(u^{-1}wu^{-\top}\right)\rho\left(u^{-\top}\right)v,\qquad w\in\calD_n. \]
	On the other hand, the mutual orthogonality of subspaces $ \calD^2(\rho)_d $ is easily proved using the invariance of the measure $ \mathsf v_\calD $ under $ \ell\,k_{e^{it}I_n}\ell^{-1} $ for $ t\in\R $. The claims \ref{lem:010:2}--\ref{lem:010:3} are proved by direct computation. We leave the details to the reader. 
\end{proof}
	
	By \eqref{eq:009}, we have
	\begin{equation}\label{eq:061}
		\calH^2(\rho)_K=\mathrm{span}_\C\left\{f_{\mu,v}:\mu\in\C[X_{r,s}:1\leq r,s\leq n],\ v\in V\right\},
	\end{equation}
	where the functions $ f_{\mu,v}:\calH_n\to V $ are given by
	\begin{equation}\label{eq:048}
		f_{\mu,v}(z)=\left(p_{\mu,v}\big|_\rho\ell\right)(z)=\mu\left((z-iI_n)(z+iI_n)^{-1}\right)\rho\left(\frac1{2i}(z+iI_n)\right)^{-1}v. 
	\end{equation}
	
	In Sec.\,\ref{sec:042}, we will need the following characterization of representations $ \pi_\rho $ and $ \pi_\rho^* $.
	
	\begin{lemma}\label{lem:011}
		Let $ \rho $ be an irreducible polynomial representation of $ \GL_n(\C) $ of highest weight $ \omega=(\omega_1,\ldots,\omega_n)\in\Z^n $, where $ \omega_1\geq\ldots\geq\omega_n>n $. Then, we have the following:
		\begin{enumerate}[label=\textup{(\roman*)}]
			\item\label{lem:011:1} The representation $ \pi_\rho $ is the up to unitary equivalence unique irreducible unitary representation $ (\pi,H) $ of $ \Sp_{2n}(\R)  $ containing a $ K $-invariant subspace $ U $ that is $ K $-equivalent to $ \rho_K^* $ and annihilated by $ \mathfrak p_\C^+ $.
			\item\label{lem:011:2} The representation $ \pi_\rho^* $ is the up to unitary equivalence unique irreducible unitary representation $ (\pi,H) $ of $ \Sp_{2n}(\R)  $ containing a $ K $-invariant subspace $ U $ that is $ K $-equivalent to $ \rho_K $ and annihilated by $ \mathfrak p_\C^- $.
		\end{enumerate}
	\end{lemma}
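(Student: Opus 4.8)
My plan is to prove \ref{lem:011:1} by exhibiting the required subspace inside $\pi_\rho$ and then establishing uniqueness through the theory of generalized Verma (lowest weight) modules, and afterwards to deduce \ref{lem:011:2} from \ref{lem:011:1} by passing to the contragredient.

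\emph{Existence in \ref{lem:011:1}.} I would transport the question to the realization $\pi_\rho^\ell$ on $\calD^2(\rho)$ via the unitary equivalence $f\mapsto f\big|_\rho\ell^{-1}$ and take $U$ to be $\calD^2(\rho)_0$. By Lemma~\ref{lem:010}\ref{lem:010:3} the space $\calD^2(\rho)_0$ is $K$-equivalent to $\sigma_K$, which by Lemma~\ref{lem:012}\ref{lem:012:2} is $K$-equivalent to $\rho_K^*$; and by Lemma~\ref{lem:010}\ref{lem:010:2} with $r=0$ (and $\calD^2(\rho)_{-1}=0$) we have $\pi_\rho^\ell(\mathfrak p_\C^+)\calD^2(\rho)_0=0$. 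Since $\pi_\rho^\ell\cong\pi_\rho$, this shows that $\pi_\rho$ contains a $K$-invariant subspace that is $K$-equivalent to $\rho_K^*$ and annihilated by $\mathfrak p_\C^+$.

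\emph{Uniqueness in \ref{lem:011:1}.} Let $(\pi,H)$ be any irreducible unitary representation containing such a $U$. Because $\mathfrak p_\C^+$ is abelian and $\mathfrak k_\C$-stable, $\frakq=\mathfrak k_\C\oplus\mathfrak p_\C^+$ is a subalgebra of $\frakg_\C$, and $U$ is a $\frakq$-module on which $\mathfrak p_\C^+$ acts by $0$ and $\mathfrak k_\C$ acts via $\rho_K^*$. The inclusion $U\hookrightarrow H_K$ is a map of $(\frakq,K)$-modules, so by the universal property of $N(U)=\calU(\frakg_\C)\otimes_{\calU(\frakq)}U$ it extends to a nonzero $(\frakg,K)$-homomorphism $N(U)\to H_K$; irreducibility of $H_K$ forces this to be surjective, so $H_K$ is an irreducible quotient of $N(U)$. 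By the Poincar\'e--Birkhoff--Witt theorem $N(U)\cong\calU(\mathfrak p_\C^-)\otimes_\C U$ carries a grading by degree in $\mathfrak p_\C^-$ with $U$ in degree $0$; a standard argument then shows $N(U)$ has a unique maximal proper submodule, hence a unique irreducible quotient $L(U)$, depending only on the $K$-type of $U$. Thus $H_K\cong L(\rho_K^*)$. Applying this to $\pi_\rho$ itself gives $(\pi_\rho)_K\cong L(\rho_K^*)$, and since irreducible unitary representations with isomorphic underlying $(\frakg,K)$-modules are unitarily equivalent, $\pi\cong\pi_\rho$. I expect the uniqueness of the irreducible quotient of $N(U)$ to be the main point; everything else is formal.

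\emph{Reduction of \ref{lem:011:2} to \ref{lem:011:1}.} Suppose $(\pi,H)$ is irreducible unitary and contains a $K$-invariant $U$ that is $K$-equivalent to $\rho_K$ and annihilated by $\mathfrak p_\C^-$. I pass to the contragredient $(\pi^*,H^*)$ and set $U^*=\{v^*:v\in U\}$. The conjugate-linear $K$-equivariant bijection $v\mapsto v^*$ makes $U^*$ $K$-equivalent to $\rho_K^*$. Since $v\mapsto v^*$ intertwines $\pi$ and $\pi^*$ at the group level, differentiating and using antilinearity yields $\pi^*(X)v^*=\left(\pi(\overline X)v\right)^*$ for $X\in\frakg_\C$; as $\overline{\mathfrak p_\C^+}=\mathfrak p_\C^-$, taking $X\in\mathfrak p_\C^+$ and $v\in U$ gives $\pi^*(\mathfrak p_\C^+)U^*=0$. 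Hence $(\pi^*,H^*)$ meets the hypotheses of \ref{lem:011:1}, so $\pi^*\cong\pi_\rho$ and therefore $\pi\cong(\pi^*)^*\cong\pi_\rho^*$. The same computation applied to $\calD^2(\rho)_0\subseteq\pi_\rho$ (which is annihilated by $\mathfrak p_\C^+$) shows that its image in $\pi_\rho^*$ is a copy of $\rho_K$ annihilated by $\mathfrak p_\C^-$, giving existence in \ref{lem:011:2}.
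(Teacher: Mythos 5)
Your proposal is correct, and its skeleton matches the paper's: existence is established exactly as in the paper (pass to $\calD^2(\rho)$, take $U=\calD^2(\rho)_0$, invoke Lem.\,\ref{lem:010}\ref{lem:010:2}--\ref{lem:010:3} and Lem.\,\ref{lem:012}\ref{lem:012:2}), and \ref{lem:011:2} is deduced from \ref{lem:011:1} by the same duality $\overline{\mathfrak p_\C^+}=\mathfrak p_\C^-$; your computation $\pi^*(X)v^*=\left(\pi(\overline X)v\right)^*$ is precisely what lies behind the paper's one-line ``by duality.'' The one place you genuinely diverge is the uniqueness step. The paper fixes a highest weight vector $v^{top}\in U$, observes that it is a maximal vector (killed by $\mathfrak k_\C^+$ and $\mathfrak p_\C^+$) generating $H_K$, so that $H_K$ is an irreducible standard cyclic $\mathfrak g_\C$-module of known highest weight, and quotes \cite[\S20.3, Thm.\,A]{humphreys72}; you instead induct the whole $K$-type from the parabolic $\frakq=\mathfrak k_\C\oplus\mathfrak p_\C^+$ and identify $H_K$ with the unique irreducible quotient $L(U)$ of the generalized Verma module $N(U)=\calU(\frakg_\C)\otimes_{\calU(\frakq)}U$. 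Both are valid, and they are close to the same argument in different packaging: the ``standard argument'' you defer to (unique maximal proper submodule of $N(U)$) is itself run by noting that the highest weight of $U$ occurs with multiplicity one in $N(U)$ and that no proper submodule can meet that weight line --- i.e., exactly the vector the paper starts from. What your route buys is structural clarity (Frobenius reciprocity, no choice of vector, no appeal to a classification theorem); what it costs is that the key uniqueness property of $N(U)$ remains an assertion, and you use twice, without justification, the fact (cited in the paper as \cite[Thm.\,3.4.11]{wallach88}) that irreducibility of the unitary representation $(\pi,H)$ makes $H_K$ an irreducible $(\mathfrak g,K)$-module and that irreducible unitary representations with isomorphic underlying $(\mathfrak g,K)$-modules are unitarily equivalent; these citations should be added, but they are standard and do not affect correctness.
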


	\begin{proof}
		In the case when $ \rho $ is a character, the lemma follows from \cite[Lem.\,3.1]{zunar23}.
		
		In the general case, we note that the representation $ \pi_\rho $ has the properties stated in \ref{lem:011:1} since the subspace $ \calD^2(\rho)_0 $ of $ (\pi_\rho^\ell,\calD^2(\rho))\cong(\pi_\rho,\calH^2(\rho)) $ is annihilated by $ \mathfrak p_\C^+ $ by Lem.\,\ref{lem:010}\ref{lem:010:2} and is $ K $-equivalent to $ \sigma_K\cong \rho_K^* $ by Lem.\,\ref{lem:010}\ref{lem:010:3} and Lem.\,\ref{lem:012}\ref{lem:012:2}. 
		
		On the other hand, given an $ \Sp_{2n}(\R) $-representation $ (\pi,H) $ and a subspace $ U\subseteq H $ as in \ref{lem:011:1}, let us fix a weight vector $ v^{top}\in U $ of (the highest) weight $ -\sum_{r=1}^n\omega_{n+1-r}e_r $ (see Lem.\,\ref{lem:012}\ref{lem:012:2}). The vector $ v^{top} $ is annihilated by both $ \mathfrak k_\C^+ $ (by \S\ref{subsec:008}) and $\mathfrak p_\C^+ $ (by the assumption on $ U $) and generates the irreducible (by \cite[Thm.\,3.4.11]{wallach88} and the connectedness of $ K $) $ \mathfrak g_\C $-module $ H_K $ of $ K $-finite vectors in $ H $. Thus, $ H_K $ is, in the sense of \cite[\S20.2]{humphreys72}, an irreducible standard cyclic $ \mathfrak g_\C $-module of highest weight $ -\sum_{r=1}^n\omega_{n+1-r}e_r $. By \cite[\S20.3, Thm.\,A]{humphreys72}, this property uniquely determines the $ \mathfrak g_\C $-module isomorphism class of $ H_K $ and hence, by the connectedness of $ K $, the $ (\mathfrak g,K) $-module isomorphism class of $ H_K $ or equivalently, by \cite[Thm.\,3.4.11]{wallach88}, the unitary $ \Sp_{2n}(\R) $-equivalence class of $ (\pi,H) $. 
		
		This proves \ref{lem:011:1}, which implies \ref{lem:011:2} by duality since $ \mathfrak p_\C^-=\overline{\mathfrak p_\C^+} $.
	\end{proof}

	With reference to \cite[Thm.\,9.20]{knapp86}, $ \pi_\rho $ is a discrete series representation of $ \Sp_{2n}(\R) $ with Harish-Chandra parameter $ -\sum_{r=1}^n(\omega_r-r)e_r $, Blattner parameter $ -\sum_{r=1}^n\omega_re_r $, and lowest $ K $-type $ \rho_K^* $. On the other hand, $ \pi_\rho^* $ is a discrete series representation of $ \Sp_{2n}(\R) $ with Harish-Chandra parameter $ \sum_{r=1}^n(\omega_r-r)e_r $, Blattner parameter $ \sum_{r=1}^n\omega_re_r $, and lowest $ K $-type $ \rho_K $. In particular, we have the following lemma.
	
	\begin{lemma}\label{lem:037}
		The $ K $-type $ \rho_K^* $ (resp., $ \rho_K $) occurs in $ \pi_\rho $ (resp., $ \pi_\rho^* $) with multiplicity one.
	\end{lemma}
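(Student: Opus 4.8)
The plan is to prove the multiplicity-one statement for $ \pi_\rho $ directly from the explicit model $ (\pi_\rho^\ell,\calD^2(\rho))\cong(\pi_\rho,\calH^2(\rho)) $ of Lem.\,\ref{lem:010}, and then to deduce the corresponding statement for $ \pi_\rho^* $ by duality. Concretely, I would use the $ K $-stable decomposition $ \calD^2(\rho)_K=\bigoplus_{d=0}^\infty\calD^2(\rho)_d $ from Lem.\,\ref{lem:010}\ref{lem:010:1}, together with the fact, recorded in Lem.\,\ref{lem:010}\ref{lem:010:3} and Lem.\,\ref{lem:012}\ref{lem:012:2}, that the bottom piece $ \calD^2(\rho)_0 $ is $ K $-equivalent to $ \sigma_K\cong\rho_K^* $ and in particular is irreducible. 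Thus $ \rho_K^* $ certainly occurs in $ \pi_\rho $, realised inside $ \calD^2(\rho)_0 $, and the whole task reduces to showing that $ \rho_K^* $ does \emph{not} occur in any of the higher summands $ \calD^2(\rho)_d $ with $ d\geq1 $.

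To separate the summands I would compute the scalar by which the centre of $ K $ acts on each $ \calD^2(\rho)_d $. Writing $ u=e^{i\theta}I_n $, so that $ k_u $ generates the centre of $ K\cong\U(n) $, the formula $ \left(\pi_\rho^\ell(k_u)p_{\mu,v}\right)(w)=\mu\left(u^{-1}wu^{-\top}\right)\rho\left(u^{-\top}\right)v $ from the proof of Lem.\,\ref{lem:010} gives, for $ \mu $ homogeneous of degree $ d $,
\[ \left(\pi_\rho^\ell\left(k_{e^{i\theta}I_n}\right)p_{\mu,v}\right)(w)=\mu\left(e^{-2i\theta}w\right)\rho\left(e^{-i\theta}I_n\right)v=e^{-i\theta\left(2d+\left|\omega\right|\right)}p_{\mu,v}(w), \]
where $ \left|\omega\right|=\sum_{r=1}^n\omega_r $ and we use that $ \rho\left(e^{-i\theta}I_n\right)=e^{-i\theta\left|\omega\right|}\,\mathrm{Id}_V $ because $ \rho $ is polynomial of total degree $ \left|\omega\right| $. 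Hence $ k_{e^{i\theta}I_n} $ acts on $ \calD^2(\rho)_d $ by the scalar $ e^{-i\theta(2d+\left|\omega\right|)} $, and these central characters are pairwise distinct for distinct $ d $, since $ 2d+\left|\omega\right| $ is strictly increasing in $ d $. As the centre of $ K $ must act by a single scalar on any irreducible $ K $-subrepresentation (Schur's lemma), a copy of $ \rho_K^*\cong\calD^2(\rho)_0 $ — on which the centre acts by $ e^{-i\theta\left|\omega\right|} $ — can occur only in the summand with $ 2d+\left|\omega\right|=\left|\omega\right| $, that is, only in $ \calD^2(\rho)_0 $. Since $ \calD^2(\rho)_0 $ is irreducible, $ \rho_K^* $ occurs in $ \pi_\rho $ with multiplicity exactly one.

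Finally, the statement for $ \pi_\rho^* $ follows formally: for a unitary representation the multiplicity of an irreducible $ K $-type $ \tau $ in the contragredient equals the multiplicity of $ \tau^* $ in the original representation, so taking $ \tau=\rho_K $ and using $ \tau^*=\rho_K^* $ we get
\[ \left[\pi_\rho^*:\rho_K\right]=\left[\pi_\rho:\rho_K^*\right]=1. \]
The only point requiring care is the verification that $ \rho_K^* $ cannot reappear in the higher, reducible summands $ \calD^2(\rho)_d $ with $ d\geq1 $; the central-character computation is precisely what lets us avoid decomposing those summands explicitly. (Alternatively, the whole lemma may be read off from the general theory of discrete series, since the lowest $ K $-type — here $ \rho_K^* $ for $ \pi_\rho $ and $ \rho_K $ for $ \pi_\rho^* $, as identified via \cite[Thm.\,9.20]{knapp86} — always occurs with multiplicity one; the argument above gives a self-contained proof within the present setup.)
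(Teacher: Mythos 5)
Your proof is correct, but it follows a genuinely different route from the paper's. The paper does not argue inside the model $\calD^2(\rho)$ at all: immediately before the lemma it identifies $\pi_\rho$ (resp.\ $\pi_\rho^*$) as a discrete series representation with Blattner parameter $-\sum_{r=1}^n\omega_re_r$ (resp.\ $\sum_{r=1}^n\omega_re_r$) and lowest $K$-type $\rho_K^*$ (resp.\ $\rho_K$), and then reads the multiplicity-one statement off from the general theory of discrete series \cite[Thm.\,9.20]{knapp86}, by which the lowest $K$-type always occurs with multiplicity one --- i.e., exactly the alternative you mention in your closing parenthesis is the paper's actual proof. Your argument instead stays within the explicit realization of Lem.\,\ref{lem:010}: the center of $K$ acts on each graded piece $\calD^2(\rho)_d$ by the character $k_{e^{i\theta}I_n}\mapsto e^{-i\theta(2d+\abs{\omega})}$, these characters separate the degrees $d$, so any copy of $\rho_K^*$ (whose central character is the one of degree $0$) must lie in the irreducible bottom piece $\calD^2(\rho)_0\cong\sigma_K\cong\rho_K^*$; the statement for $\pi_\rho^*$ then follows from the standard duality $[\pi_\rho^*:\tau]=[\pi_\rho:\tau^*]$. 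Both arguments are sound, and your central-character computation checks out against the formula for $\pi_\rho^\ell(k_u)p_{\mu,v}$ in the proof of Lem.\,\ref{lem:010}. What the paper's citation buys is brevity and economy: the discrete series parameters are being recorded anyway, and the lemma drops out as a by-product. What your argument buys is self-containedness: it needs only facts already established in the paper (Lem.\,\ref{lem:010}, Lem.\,\ref{lem:012}) plus Schur's lemma, and avoids appealing to the Blattner-type multiplicity result entirely. One small point you assert without proof is that $\rho\left(e^{-i\theta}I_n\right)=e^{-i\theta\abs{\omega}}\,\mathrm{Id}_V$; this is true and can be justified either by Schur's lemma (scalar matrices are central in $\GL_n(\C)$, and the scalar is read off on a highest weight vector) or by Lem.\,\ref{lem:004}, since every weight of $\rho$ has coordinate sum equal to the number $\abs{\omega}$ of boxes of the Young diagram.
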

	
	Let us note that by Lem.\ \ref{lem:012}\ref{lem:012:2}, Lem.\,\ref{lem:010}\ref{lem:010:3}, and Lem.\,\ref{lem:037}, we have the following lemma.
	\begin{lemma}\label{lem:116}
		\begin{enumerate}[label=\textup{(\roman*)}]
			\item\label{lem:116:1} The assignment $ v\mapsto f_{1,v} $ defines a $ K $-equivalence of $ \left(\sigma_K,V\right) $ with 
			\[ \calH^2(\rho)_{[\sigma_K]}=\left\{f_{1,v}:v\in V\right\}. \]
			\item\label{lem:116:2} The assignment $ v^*\mapsto f_{1,v}^* $ defines a $ K $-equivalence of $ \left(\sigma_K^*,V^*\right) $ with 
			\begin{equation}\label{eq:112}
				\left(\calH^2(\rho)^*\right)_{[\rho_K]}=\left\{f_{1,v}^*:v\in V\right\}. 
			\end{equation}
		\end{enumerate}
	\end{lemma}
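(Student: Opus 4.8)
The plan is to deduce both claims from the chain of identifications already assembled, using the multiplicity-one statement of Lemma \ref{lem:037} to pin down the two isotypic components.

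First I would establish \ref{lem:116:1}. Recall that $f\mapsto f\big|_\rho\ell^{-1}$ is a unitary equivalence of $(\pi_\rho,\calH^2(\rho))$ with $(\pi_\rho^\ell,\calD^2(\rho))$, so its inverse $g\mapsto g\big|_\rho\ell$ is an $\Sp_{2n}(\R)$-equivariant, hence in particular $K$-equivariant, isomorphism $\calD^2(\rho)\to\calH^2(\rho)$. By \eqref{eq:048}, this isomorphism carries $p_{1,v}$ to $f_{1,v}$. Composing it with the $K$-equivalence $v\mapsto p_{1,v}$ of $(\sigma_K,V)$ with $\calD^2(\rho)_0$ from Lemma \ref{lem:010}\ref{lem:010:3}, I obtain that $v\mapsto f_{1,v}$ is a $K$-equivariant linear isomorphism of $(\sigma_K,V)$ onto the $K$-invariant subspace $\left\{f_{1,v}:v\in V\right\}$ of $\calH^2(\rho)$. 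In particular, this subspace is $K$-equivalent to $\sigma_K$, hence by Lemma \ref{lem:012}\ref{lem:012:2} to $\rho_K^*$. Since, by Lemma \ref{lem:037}, the $K$-type $\rho_K^*$ occurs in $\pi_\rho$ with multiplicity one, the isotypic component $\calH^2(\rho)_{[\sigma_K]}$ is a single copy of $\sigma_K$, of dimension $\dim_\C V$; being a $K$-invariant subspace of that component of the same dimension, $\left\{f_{1,v}:v\in V\right\}$ must coincide with it.

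For \ref{lem:116:2} I would argue by duality. The map $v^*\mapsto f_{1,v}^*$ is $\C$-linear and injective, being the composition of the conjugate-linear bijection $v^*\mapsto v$, the linear isomorphism $v\mapsto f_{1,v}$ of \ref{lem:116:1}, and the conjugate-linear map $f\mapsto f^*$. Using the definition of the contragredient representation together with the $K$-equivariance of $v\mapsto f_{1,v}$, one checks directly that for $k\in K$ one has $f_{1,\sigma_K(k)v}^*=\left(\pi_\rho(k)f_{1,v}\right)^*=\pi_\rho^*(k)f_{1,v}^*$, and since $\sigma_K^*(k)v^*=\left(\sigma_K(k)v\right)^*$, this says precisely that $v^*\mapsto f_{1,v}^*$ intertwines $\sigma_K^*$ with $\pi_\rho^*\big|_K$. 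Hence $\left\{f_{1,v}^*:v\in V\right\}$ is a $K$-invariant subspace of $\calH^2(\rho)^*$ that is $K$-equivalent to $\sigma_K^*\cong\rho_K$. As in the previous paragraph, the multiplicity-one statement of Lemma \ref{lem:037}, now for $\rho_K$ in $\pi_\rho^*$, forces this subspace to equal the isotypic component $\left(\calH^2(\rho)^*\right)_{[\rho_K]}$.

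I do not expect a genuine obstacle here, as the lemma is essentially a corollary of the cited results; the only care needed is bookkeeping. Concretely, I must track that the chosen unitary equivalence is $K$-equivariant in the right direction and sends $p_{1,v}$ to $f_{1,v}$, and keep the conjugate-linearity of the duality $v\mapsto v^*$ straight so that the composite map in \ref{lem:116:2} comes out $\C$-linear. No input beyond Lemmas \ref{lem:010}, \ref{lem:012}, and \ref{lem:037} is required.
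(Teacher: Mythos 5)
Your proof is correct and follows essentially the same route as the paper, which derives the lemma directly from Lem.\,\ref{lem:012}\ref{lem:012:2}, Lem.\,\ref{lem:010}\ref{lem:010:3}, and the multiplicity-one statement of Lem.\,\ref{lem:037}; your transfer via $f\mapsto f\big|_\rho\ell$ and the dimension-count against the isotypic component is exactly the intended argument, with the conjugate-linearity bookkeeping for part \ref{lem:116:2} made explicit.
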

	
 	An irreducible unitary representation $ (\pi,H) $ of $ \Sp_{2n}(\R) $ is said to be integrable if for every choice of $ K $-finite vectors $ h,h'\in H_K $, the matrix coefficient $ c_{h,h'}:\Sp_{2n}(\R)\to\C $,
	\begin{equation}\label{eq:039}
		c_{h,h'}(g)=\scal{\pi(g)h}{h'}_H,
	\end{equation}
 	belongs to $ L^1(\Sp_{2n}(\R)) $. By applying the criterion for integrability of discrete series representations given in \cite{hecht_schmid76} (see also \cite{milicic77}), we obtain the following lemma.
 	
 	\begin{lemma}\label{lem:041}
 		The representation $ \pi_\rho $ (resp., $ \pi_\rho^* $) is integrable if and only if $ \omega_n>2n $.
 	\end{lemma}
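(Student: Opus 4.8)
The plan is to apply the integrability criterion of Hecht--Schmid \cite{hecht_schmid76} to the discrete series representations $\pi_\rho$ and $\pi_\rho^*$, whose Harish-Chandra parameters were just recorded above the statement. Recall that the Hecht--Schmid criterion characterizes integrability of a discrete series representation in terms of its Harish-Chandra parameter $\Lambda$: the representation is integrable precisely when $\Lambda$ is sufficiently regular, concretely when the inner product $\scal{\Lambda}{\alpha}$ exceeds a threshold determined by the noncompact positive roots $\Delta_n^+$ for every such $\alpha$. Since integrability is preserved under passing to the contragredient (the matrix coefficient $c_{h,h'}$ of $\pi_\rho^*$ is, up to complex conjugation and relabeling of vectors, a matrix coefficient of $\pi_\rho$, so the two lie in $L^1$ simultaneously), it suffices to analyze $\pi_\rho$.

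First I would substitute the explicit data. By the discussion preceding Lem.\,\ref{lem:037}, the Harish-Chandra parameter of $\pi_\rho$ is $\Lambda=-\sum_{r=1}^n(\omega_r-r)e_r$, and the positive noncompact roots are $\Delta_n^+=\left\{e_r+e_s:1\leq r\leq s\leq n\right\}$. Using the standard inner product on $\sum_r\R e_r$ in which the $e_r$ are orthonormal, for a root $\alpha=e_r+e_s$ with $r\leq s$ one computes $\scal{\Lambda}{\alpha}=-(\omega_r-r)-(\omega_s-s)$. The key observation is that the extremal (least favorable) case for the integrability inequality occurs at the root $\alpha=2e_n=e_n+e_n$, the longest noncompact root, because the condition $\omega_1\geq\ldots\geq\omega_n$ forces $\omega_r-r\leq\omega_n-n$ to fail in the wrong direction — more precisely, among all $\scal{\Lambda}{\alpha}$ the binding constraint is the one involving the smallest $\omega_r$, namely $\omega_n$, paired with itself.

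Next I would translate the Hecht--Schmid threshold into the explicit numerical inequality. After unwinding the normalization of the criterion in \cite{hecht_schmid76} (equivalently via the reformulation in \cite{milicic77}), the integrability condition becomes a lower bound on $\abs{\scal{\Lambda}{\alpha}}$ for the short noncompact roots, and evaluating at $\alpha=2e_n$ yields a condition equivalent to $\omega_n>2n$. One checks that once this holds for the extremal root, the inequalities for all other $\alpha=e_r+e_s$ hold automatically, since $\omega_r\geq\omega_n$ makes $\abs{\scal{\Lambda}{e_r+e_s}}=(\omega_r-r)+(\omega_s-s)$ only larger. Conversely, if $\omega_n\leq 2n$, the inequality fails at $\alpha=2e_n$ and the representation is not integrable, giving the ``only if'' direction.

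The main obstacle I expect is purely bookkeeping rather than conceptual: correctly pinning down the constant in the Hecht--Schmid integrability threshold for $\Sp_{2n}(\R)$ and verifying that it converts exactly to the clean bound $\omega_n>2n$ (and not $\omega_n>2n-1$ or a similar off-by-one variant). This requires care with the half-sum of roots and with the convention (sign and normalization) under which the Harish-Chandra parameter $\Lambda$ was defined above; the factor $2n$ should emerge as twice the value $n$ appearing in the root $2e_n$ together with the contribution of the shift by $r$ at $r=n$. I would therefore carry out the root-system computation explicitly for the extremal root, confirm the threshold against the scalar case $\rho=\det^m$ treated in \cite{zunar23} (where the condition $m>2n$ is already known) as a consistency check, and then invoke monotonicity in $\omega_r$ to dispatch the remaining roots.
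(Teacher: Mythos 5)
Your proposal is correct and takes essentially the same approach as the paper: the paper's proof of this lemma consists entirely of invoking the Hecht--Schmid integrability criterion (citing Mili\v ci\'c as an alternative reference) applied to the discrete series data recorded just before the statement. Your sketch supplies the bookkeeping the paper leaves implicit, and it does come out right --- the binding constraint is indeed at the long noncompact root $2e_n$, where the Hecht--Schmid threshold $\tfrac12\sum_{\alpha\in\Delta^+}\abs{\scal{\alpha}{2e_n}}=2n$ against $\abs{\scal{\Lambda}{2e_n}}=2(\omega_n-n)$ gives exactly $\omega_n>2n$, with the conditions at the remaining roots $e_r+e_s$ following by the monotonicity $\omega_r-r\geq\omega_n-n$.
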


	\section{Vector-valued Siegel cusp forms}\label{sec:104}
 	
 	From now until the end of the paper, let:
 	\begin{enumerate}[label=\textup{(\arabic*)}]
 		\item $ \Gamma $ be a subgroup of $ \Sp_{2n}(\R) $ that is commensurable with the Siegel modular group $ \Sp_{2n}(\Z) $, i.e., is such that $ \Gamma\cap\Sp_{2n}(\Z) $ is of finite index in both $ \Gamma $ and $ \Sp_{2n}(\Z) $. 
 		\item $ (\rho,V) $ be an irreducible polynomial representation of $ \GL_n(\C) $ of highest weight $ \omega=(\omega_1,\ldots,\omega_n)\in\Z^n $, where $ \omega_1\geq\ldots\geq\omega_n\geq0 $ (see \S\ref{subsec:012}).
 	\end{enumerate}
 	
 	There exists an integer $ N\in\Z_{>0} $ such that 
 	\[ \left\{\begin{pmatrix}I_n&NS\\&I_n\end{pmatrix}:S\in M_n(\Z)\text{ and } S^\top=S\right\}\subseteq\bigcap_{\delta\in\Sp_{2n}(\Z)}\delta\Gamma\delta^{-1}. \]
 	We will denote the smallest such integer $ N $ by $ N_\Gamma $ and call it the level of $ \Gamma $. An important example of a group $ \Gamma $ of level $ N\in\Z_{>0} $ is the principal level $ N $ congruence subgroup
 	\[ \Gamma_n(N)=\left\{\gamma\in\Sp_{2n}(\Z):\gamma\cong I_{2n}\pmod N\right\} \]
 	of $ \Sp_{2n}(\Z) $.
 	 
	Let $ f:\calH_n\to V $ be a holomorphic function such that
	\[ f\big|_\rho\gamma=f,\qquad \gamma\in\Gamma. \]
	It follows from \cite[Ch.\,I, \S0.12]{freitag83} that for every $ \delta\in\Sp_{2n}(\Z) $, we have an absolutely and locally uniformly convergent Fourier expansion
	\begin{equation}\label{eq:015}
		\left(f\big|_\rho\delta\right)(z)=\sum_{T}e^{\frac{2\pi i}{N_\Gamma}\tr(Tz)}a_T(\delta),\qquad z\in\calH_n,
	\end{equation}
	where the sum goes over the symmetric matrices $ T=(t_{r,s})\in M_n(\Q) $ that are half-integral, i.e., satisfy $ t_{r,r}\in\Z $ and $ 2t_{r,s}\in\Z $ for all $ r,s\in\left\{1,\ldots,n\right\} $. The vectors $ a_T(\delta)\in V $ are given, using the notation $ z=x+iy $ for elements of $ \calH_n $ and writing $ m=\frac{n(n+1)}2 $, by the formula
	\begin{equation}\label{eq:017}
		a_T(\delta)=N_\Gamma^{-m}\int_{[0,N_\Gamma]^m}e^{-\frac{2\pi i}{N_\Gamma}\tr(Tz)}\left(f\big|_\rho \delta\right)(z)\,\prod_{1\leq r\leq s\leq n}dx_{r,s}
	\end{equation}
 	for every positive definite matrix $ y\in\GL_n(\R) $. One shows as in \textup{\cite[Ch.\,2, proof of Thm.\,3.13]{andrianov_zhuravlev95}} that
 	\begin{equation}\label{eq:022}
 		a_T(\delta)=\rho\left(U^{-\top}\right)a_{U^\top TU}\left(\delta \begin{pmatrix}U&\\&U^{-\top}\end{pmatrix}\right),\qquad U\in\GL_n(\Z).
 	\end{equation}

	A Siegel cusp form of weight $ \rho $ for $ \Gamma $ is a holomorphic function $ f:\calH_n\to V $ with the following properties:
	 \begin{enumerate}[label=\textrm{(S\arabic*)}]
	 	\item\label{def:016:1} $ f\big|_\rho\gamma=f $ for all $ \gamma\in\Gamma $.
	 	\item\label{def:016:2} For every $ \delta\in\Sp_{2n}(\Z) $, the coefficients $ a_T(\delta) $ in the Fourier expansion \eqref{eq:015} of $ f\big|_\rho\delta $ satisfy the implication
	 		\[ a_T(\delta)\neq0\quad\Rightarrow\quad T>0.  \]
	 \end{enumerate} 
  	We will denote by $ S_\rho(\Gamma) $ the finite-dimensional complex Hilbert space of Siegel cusp forms of weight $ \rho $ for $ \Gamma $, equipped with the Petersson inner product
  	\begin{equation}\label{eq:026}
  		\scal{f_1}{f_2}_{S_\rho(\Gamma)}=\frac1{\abs{\Gamma\cap\left\{\pm I_{2n}\right\}}}\int_{\Gamma\backslash\calH_n}\scal{\rho\Big(y^{\frac12}\Big)f_1(z)}{\rho\Big(y^{\frac12}\Big)f_2(z)}\,d\mathsf v(z)
  	\end{equation}
  	(cf.\ \cite[Ch.\,I, Def.\,4.6]{freitag91} and \cite[\S1 and \S3]{weissauer83}).

  	It is well-known that $ S_\rho(\Gamma)=0 $ if $ \omega_n=0 $ (for $ n=1 $, see \cite[Thm.\,2.5.2]{miyake06}; for $ n>1 $, see\cite[Ch.\,I, Prop.\,4.7 and Cor.\ of Thm.\,1.4]{freitag91}). If $ \omega_n>0 $, we have the following characterization of $ S_\rho(\Gamma) $, whose proof in the case when $ \Gamma=\Sp_{2n}(\Z) $ can be found in \cite[\S3, Cor.\,1 and (3.9)--(3.11)]{godement57_7}.
  	
  	\begin{lemma}\label{lem:026}
 		Suppose that $ \omega_n>0 $. Let $ f:\calH_n\to V $ be a holomorphic function satisfying \ref{def:016:1}. Then, $ f $ belongs to $ S_\rho(\Gamma) $ if and only if 
 		\begin{equation}\label{eq:016}
 			\sup_{z\in\calH_n}\norm{\rho\Big(y^{\frac12}\Big)f(z)}<\infty.
 		\end{equation}
 	\end{lemma}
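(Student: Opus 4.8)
The plan is to study the non‑negative function $\phi(z)=\norm{\rho\bigl(y^{\frac12}\bigr)f(z)}$, $z=x+iy\in\calH_n$, which is $\Gamma$‑invariant: by \eqref{eq:020} and \ref{def:016:1} one has $\phi(z)=\norm{\rho\bigl(y^{\frac12}\bigr)\bigl(f\big|_\rho\gamma\bigr)(z)}=\phi(\gamma.z)$ for $\gamma\in\Gamma$. Condition \eqref{eq:016} is precisely $\sup_{\calH_n}\phi<\infty$, so, granting \ref{def:016:1}, it remains to prove that the cusp condition \ref{def:016:2} is equivalent to the boundedness of $\phi$. I will treat the two implications separately; the vector‑valued substance lies in \emph{bounded $\Rightarrow$ cuspidal}.

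For that implication, fix $\delta\in\Sp_{2n}(\Z)$ and a half‑integral symmetric $T$. Multiplying \eqref{eq:017} by the ($x$‑independent) operator $\rho\bigl(y^{\frac12}\bigr)$, taking norms, and using $\abs{e^{-\frac{2\pi i}{N_\Gamma}\tr(Tz)}}=e^{\frac{2\pi}{N_\Gamma}\tr(Ty)}$ together with \eqref{eq:020} (which turns $\norm{\rho(y^{\frac12})(f\big|_\rho\delta)(z)}$ into $\phi(\delta.z)\leq\sup\phi=:M$), I obtain
\[ \norm{\rho\Big(y^{\frac12}\Big)a_T(\delta)}\leq M\,e^{\frac{2\pi}{N_\Gamma}\tr(Ty)},\qquad y>0. \]
Now suppose $T\not>0$; I must show $a_T(\delta)=0$. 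There is a primitive $\xi\in\Z^n$ with $\xi^\top T\xi\leq0$ (if $T$ is indefinite, pick $\xi$ in the open rational cone $\{\xi^\top T\xi<0\}$; if $T\geq0$ is singular, pick $\xi$ in the rational subspace $\ker T$). Completing $\xi$ to $U\in\GL_n(\Z)$ with first column $\xi$ and invoking \eqref{eq:022} together with the invertibility of $\rho(U^{-\top})$, I may replace $(T,\delta)$ by $(U^\top TU,\,\delta\,\diag(U,U^{-\top}))$ and thus assume $T_{11}=\xi^\top T\xi\leq0$. Taking $y=\diag(t,1,\dots,1)$ with $t\to\infty$ keeps the right‑hand side bounded, since $\tr(Ty)=t\,T_{11}+\sum_{r\geq2}T_{rr}$ with $T_{11}\leq0$. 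For the left‑hand side I decompose $a_T(\delta)=\sum_\lambda a_T(\delta)_\lambda$ into weight components, which are mutually orthogonal because $\rho\big|_{\U(n)}$ is unitary; on the weight‑$\lambda$ space $\rho\bigl(y^{\frac12}\bigr)$ acts as the scalar $t^{\lambda_1/2}$, so $\norm{\rho(y^{\frac12})a_T(\delta)}^2=\sum_\lambda t^{\lambda_1}\norm{a_T(\delta)_\lambda}^2$. By Lem.\,\ref{lem:004} every weight satisfies $\lambda_1\geq\omega_n>0$, so if $a_T(\delta)\neq0$ the left‑hand side tends to $\infty$, contradicting the bound. Hence $a_T(\delta)=0$ whenever $T\not>0$, which is \ref{def:016:2}.

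For the converse I again use the $\Gamma$‑invariance of $\phi$. By reduction theory $\calH_n=\Sp_{2n}(\Z)\cdot\mathcal S$ for a Siegel domain $\mathcal S$, and writing $\Sp_{2n}(\Z)=\bigsqcup_{\delta\in R}\bigl(\Gamma\cap\Sp_{2n}(\Z)\bigr)\delta$ with $R$ finite, every $w\in\calH_n$ equals $\gamma.(\delta.z)$ with $\gamma\in\Gamma$, $\delta\in R$, $z=x+iy\in\mathcal S$; by invariance and \eqref{eq:020}, $\phi(w)=\norm{\rho(y^{\frac12})(f\big|_\rho\delta)(z)}$. Thus it suffices to bound each such quantity on $\mathcal S$. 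By \ref{def:016:2} the expansion \eqref{eq:015} of $f\big|_\rho\delta$ runs only over $T>0$, so the norm is at most $\sum_{T>0}\norm{\rho(y^{\frac12})}\,\norm{a_T(\delta)}\,e^{-\frac{2\pi}{N_\Gamma}\tr(Ty)}$. Here $\norm{\rho(y^{\frac12})}=\lambda_{\max}(\rho(y))^{1/2}$ is polynomially bounded in the entries of $y$ (the entries of $\rho(y)$ are polynomials in those of $y$, and $\rho(y^{\frac12})=\rho(y)^{1/2}$ by Lem.\,\ref{lem:005}), the coefficients $\norm{a_T(\delta)}$ grow at most polynomially in $T$, and for $T>0$ half‑integral and $y$ Minkowski‑reduced with $y\geq t_0I_n$ one has $\tr(Ty)\geq c_n\sum_rT_{rr}y_{rr}$ with integral minima $\geq1$; hence exponential decay dominates the polynomial factors uniformly on $\mathcal S$ and the series is bounded. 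This is the vector‑valued analogue of the estimate of \cite[\S3]{godement57_7} for $\Gamma=\Sp_{2n}(\Z)$.

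The main obstacle is the lower bound in the first implication. In the scalar case $\rho=\det^m$ one has $\rho\bigl(y^{\frac12}\bigr)=\det(y)^{m/2}$, which grows in every direction $y\to\infty$, so boundedness along a ray where $\tr(Ty)$ stays controlled is immediate. For a general $\rho$ one instead needs growth of $\norm{\rho(y^{\frac12})a_T(\delta)}$ along a single well‑chosen direction; the resolution is to pass via \eqref{eq:022} to a form with $T_{11}\leq0$ and to read off the growth from the weight‑space decomposition of $V$, where the hypothesis $\omega_n>0$ — through Lem.\,\ref{lem:004} — furnishes $\lambda_1\geq\omega_n>0$ for every weight.
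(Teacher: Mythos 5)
Your proof of the implication \eqref{eq:016} $\Rightarrow$ \ref{def:016:2} --- the direction the paper proves in detail --- is correct but takes a genuinely different route. The paper first invokes the Koecher principle (which requires $n>1$; the case $n=1$ is delegated to \cite{miyake06}) to dispose of $T\not\geq0$, then uses \cite[Ch.\,2, Lem.\,3.14]{andrianov_zhuravlev95} to bring a singular $T\geq0$ into the block form $\left(\begin{smallmatrix}S&0\\0&0\end{smallmatrix}\right)$ with $U\in\SL_n(\Z)$, and finally takes $y_0=\diag(I_{n-1},t^{-2})$ with $t\searrow0$, so that $\norm{a_T(\delta)}\leq e^{\frac{2\pi}{N_\Gamma}\tr(S)}\,t^{\omega_n}\sup_{z\in\calH_n}\norm{\rho\big(y^{\frac12}\big)f(z)}\to0$, the decay rate $t^{\omega_n}$ of the operator norm coming from Lem.\,\ref{lem:004}. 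You instead treat all $T\not>0$ uniformly: a primitive $\xi\in\Z^n$ with $\xi^\top T\xi\leq0$ always exists (an open negative cone contains rational points; a rational kernel contains primitive vectors), it extends to $U\in\GL_n(\Z)$, and \eqref{eq:022} reduces everything to $T_{11}\leq0$; then $y=\diag(t,1,\dots,1)$ with $t\to\infty$ keeps $M\,e^{\frac{2\pi}{N_\Gamma}\tr(Ty)}$ bounded, while $\norm{\rho\big(y^{\frac12}\big)a_T(\delta)}^2=\sum_\lambda t^{\lambda_1}\norm{a_T(\delta)_\lambda}^2$ blows up unless $a_T(\delta)=0$, again by Lem.\,\ref{lem:004} and the orthogonality of weight spaces. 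This buys you independence from the Koecher principle and from the Andrianov--Zhuravlev reduction lemma, and your argument works verbatim for $n=1$; the paper's variant is shorter given its citations, at the price of a case distinction and of restricting this step to $n>1$.

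In the other direction (\ref{def:016:2} $\Rightarrow$ \eqref{eq:016}), which the paper settles by citing \cite{andrianov_zhuravlev95} and \cite{freitag91}, your sketch follows the same classical reduction-theory argument, but one step is unjustified as stated: the claim that $\norm{a_T(\delta)}$ grows at most polynomially in $T$. The Hecke-type polynomial bound for cusp forms is normally \emph{derived from} the boundedness \eqref{eq:016} that you are in the middle of proving, and the a priori bound obtainable from \eqref{eq:017} at a fixed $y_0$ is only exponential in $T$, so this step is circular (or at least unsupported). It is easily repaired at no cost: the expansion \eqref{eq:015} converges absolutely at any fixed point, so $\sum_{T>0}\norm{a_T(\delta)}\,e^{-\frac{\pi t_0}{N_\Gamma}\tr(T)}<\infty$; writing $\tr(Ty)\geq\frac12\tr(Ty)+\frac{t_0}{2}\tr(T)$ for $y\geq t_0I_n$ and using your inequality $\tr(Ty)\geq c_n\sum_rT_{rr}y_{rr}\geq c_n\sum_ry_{rr}$ on the reduced domain, the series is dominated by $\norm{\rho\big(y^{\frac12}\big)}_{\mathrm{op}}\,e^{-\frac{\pi c_n}{N_\Gamma}\sum_ry_{rr}}$ times a convergent sum, which is bounded on the Siegel domain since $\norm{\rho\big(y^{\frac12}\big)}_{\mathrm{op}}$ is polynomially bounded there. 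With this substitution your proof of the lemma is complete.
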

 
 	\begin{proof}
		In the case when $ n=1 $, the lemma follows from \cite[Thm.\,2.1.5]{miyake06}. Suppose that $ n>1 $. In the case when $ \dim_\C V=1 $ and $ \Gamma $ is a congruence subgroup of $ \Sp_{2n}(\Z) $, the left-to-right implication is proved in \cite[Ch.\,2, proof of Thm.\,3.13(2)]{andrianov_zhuravlev95}, and the proof generalizes straightforwardly to our more general case using \cite[Ch.\,I, Cor.\ of Thm\,1.4]{freitag91}. 
 			
 			Conversely, assuming that \eqref{eq:016} holds and that $ n>1 $, let us show that $ f $ satisfies \ref{def:016:2}.
 			Let $ \delta\in\Sp_{2n}(\Z) $. Since $ n>1 $, by the Koecher principle, we have that $ a_T(\delta)=0 $ if $ T\not\geq0 $ (see, e.g., \cite[Ch.\,I, \S4]{freitag91}). If $ T\geq0 $ and $ \det T=0 $, then by \textup{\cite[Ch.\,2, Lem.\,3.14]{andrianov_zhuravlev95}} there exists $ U\in\SL_n(\Z) $ such that $ U^\top TU $ is of the form
 			\begin{equation}\label{eq:023}
 				\begin{pmatrix}S&0\\0&0\end{pmatrix} 
 			\end{equation}
 			for some positive semidefinite, half-integral matrix $ S\in M_{n-1}(\Q) $,
 			hence by \eqref{eq:022} it suffices to prove that $ a_T(\delta)=0 $ for $ T\geq0 $ of the form \eqref{eq:023}. Given such $ T $ and any positive definite matrix $ y_0\in \GL_n(\R) $, we have the estimate
 			\begin{equation}\label{eq:021}
 				\begin{aligned}
 				\norm{a_T(\delta)}
 				&\overset{\eqref{eq:017}}\leq \sup_{\substack{z\in\calH_n\\\Im(z)=y_0}}\abs{e^{-\frac{2\pi i}{N_\Gamma}\tr(Tz)}}\,\norm{\left(f\big|_\rho\delta\right)(z)}\\
 				&\overset{\phantom{\eqref{eq:020}}}\leq e^{\frac{2\pi}{N_\Gamma}\tr(Ty_0)}\,\norm{\rho\Big(y_0^{-\frac12}\Big)}_{\mathrm{op}}\,\sup_{\substack{z\in\calH_n\\\Im(z)=y_0}}\norm{\rho\left(y_0^{\frac12}\right)\left(f\big|_\rho\delta\right)(z)}\\
 				&\overset{\eqref{eq:020}}\leq e^{\frac{2\pi}{N_\Gamma}\tr(Ty_0)}\,\norm{\rho\Big(y_0^{-\frac12}\Big)}_{\mathrm{op}}\,\sup_{z\in\calH_n}\norm{\rho\Big(y^{\frac12}\Big)f(z)},
 			\end{aligned}
 			\end{equation}
 			where $ \norm{\spacedcdot}_{\mathrm{op}} $ denotes the operator norm. Let
 			\[ y_0=\begin{pmatrix}I_{n-1}&0\\0&t^{-2}\end{pmatrix},\qquad\text{i.e.,}\qquad y_0^{-\frac12}=\begin{pmatrix}I_{n-1}&0\\0&t\end{pmatrix}, \] 
 			for some $ t\in\left]0,1\right] $. We have $ \mathrm{tr}(Ty_0)=\mathrm{tr}(S) $, and the operator $ \rho\Big(y_0^{-\frac12}\Big) $ is positive definite with eigenvalues $ t^{\lambda_n} $, where $ \lambda $ runs over the weights of $ \rho $, hence
 			\[ \norm{\rho\Big(y_0^{-\frac12}\Big)}_{\mathrm{op}}=\max_{\lambda\text{ a weight of }\rho}t^{\lambda_n}=t^{\omega_n} \]
 			by Lem.\,\ref{lem:004}. Thus, the estimate \eqref{eq:021} implies that
 			\[ \norm{a_T(\delta)}
 			\leq e^{\frac{2\pi}{N_\Gamma}\tr(S)}\,t^{\omega_n}\,\sup_{z\in\calH_n}\norm{\rho\Big(y^{\frac12}\Big)f(z)}. \]
 			Since $ \omega_n>0 $ and \eqref{eq:016} holds, the right-hand side tends to $ 0 $ as $ t\searrow0 $, hence $ a_T(\delta)=0 $. 
 	\end{proof}
 
	Let us recall the classical lift of a function $ f:\calH_n\to V $ to a function $ F_f:\Sp_{2n}(\R)\to V $ given by
	\begin{equation}\label{eq:047}
		 F_f(g)=\left(f\big|_\rho g\right)(iI_n)=\sigma_K(k)^{-1}\rho\Big(y^{\frac12}\Big)f(x+iy) 
	\end{equation}
	for all $  g=n_xa_yk\in\Sp_{2n}(\R) $ (see \eqref{eq:001}), where $ \sigma_K $ is defined by \eqref{eq:024}.
	
	\begin{lemma}\label{lem:021}
		Let $ f:\calH_n\to V $. Then, we have the following:
		\begin{enumerate}[label=\textup{(\roman*)}]
			\item\label{lem:021:1} For every $ g\in\Sp_{2n}(\R) $, the following equivalence holds:
			\[ f\big|_\rho g=f\quad\Leftrightarrow\quad F_f(g\spacedcdot)=F_f. \]
			\item\label{lem:021:2} The function $ f $ is holomorphic if and only if the function $ F_f $ is smooth and annihilated by $ \mathfrak p_\C^- $, where $ \mathfrak p_\C^- $ acts by left-invariant differential operators.
			\item\label{lem:021:3} $ F_f(gk)=\sigma_K(k)^{-1}F_f(g) $ for all $ g\in\Sp_{2n}(\R) $ and $ k\in K $.
			\item\label{lem:021:4} $ \norm{F_f(g)}=\norm{\rho\Big(y^{\frac12}\Big)f(x+iy)} $ for all $ g=n_xa_yk\in\Sp_{2n}(\R) $.
		\end{enumerate}
	\end{lemma}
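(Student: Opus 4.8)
The plan is to read off the formal assertions (i), (iii), (iv) directly from the definition \eqref{eq:047}, and to reduce (ii) to a single derivative computation at the identity. Parts (iii) and (iv) are immediate. Writing $ g=n_xa_yk $ as in \eqref{eq:001} and using that $ \sigma_K $ is a group homomorphism, we have $ gk'=n_xa_y(kk') $ with $ kk'\in K $, so \eqref{eq:047} gives $ F_f(gk')=\sigma_K(kk')^{-1}\rho\big(y^{\frac12}\big)f(x+iy)=\sigma_K(k')^{-1}F_f(g) $, which is (iii); and since $ \sigma_K $ is unitary by Lem.\,\ref{lem:012}\ref{lem:012:2}, the operator $ \sigma_K(k)^{-1} $ preserves $ \norm{\spacedcdot} $, so applying it to \eqref{eq:047} yields (iv). For (i), the key is the cocycle identity expressing that $ \big|_\rho $ is a right action: for all $ g,g'\in\Sp_{2n}(\R) $,
\[ F_f(gg')=\big(f\big|_\rho(gg')\big)(iI_n)=\big((f\big|_\rho g)\big|_\rho g'\big)(iI_n)=F_{f|_\rho g}(g'), \]
i.e.\ $ F_f(g\spacedcdot)=F_{f|_\rho g} $. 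If $ f\big|_\rho g=f $, this gives $ F_f(g\spacedcdot)=F_f $ at once. Conversely, $ f\mapsto F_f $ is injective, because setting $ k=I_{2n} $ in \eqref{eq:047} recovers $ f(x+iy)=\rho\big(y^{\frac12}\big)^{-1}F_f(n_xa_y) $ for every $ x+iy\in\calH_n $; hence $ F_f(g\spacedcdot)=F_f $ forces $ F_{f|_\rho g}=F_f $, and therefore $ f\big|_\rho g=f $.

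The substance is (ii). The smoothness equivalence is routine: the coordinates $ (x,y,k) $ depend smoothly on $ g $ and conversely, while $ \rho\big(y^{\frac12}\big) $ and $ \sigma_K(k) $ together with their inverses are smooth in their arguments, so $ f $ is smooth iff $ F_f $ is. For the holomorphy condition, suppose $ f $ is smooth and let $ X\in\mathfrak p_\C^- $. By the cocycle identity above, $ (X.F_f)(g)=(X.F_{f|_\rho g})(I_{2n}) $, so it suffices to understand the left-invariant derivative at the identity. Moreover $ f\big|_\rho g $ is holomorphic iff $ f $ is, since $ z\mapsto g.z $ is biholomorphic and $ z\mapsto\rho(Cz+D)^{-1} $ is holomorphic.

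The \emph{main obstacle} is the computation at the identity, which I expect to be the only delicate step. From the description $ \mathfrak p_\C^-=\sum_{\alpha\in\Delta_n^+}\mathfrak g_{-\alpha} $ one finds that every $ X\in\mathfrak p_\C^- $ has the shape $ \left(\begin{smallmatrix}W&-iW\\-iW&-W\end{smallmatrix}\right) $ with $ W $ a complex symmetric matrix. Decomposing $ X=X_1+iX_2 $ into its real and imaginary parts in $ \mathfrak g $, differentiating $ F_h\big(\exp(tX_j)\big)=\rho\big(iC_j(t)+D_j(t)\big)^{-1}h\big(\exp(tX_j).iI_n\big) $ along the two real one-parameter subgroups, and recombining, one checks that the contributions of the automorphy factor (the terms in $ h(iI_n) $) and of the holomorphic partials $ \partial_{z_{r,s}}h(iI_n) $ cancel, leaving a fixed nonzero scalar multiple of an antiholomorphic derivative $ \sum_{r\leq s}W_{r,s}\,\partial_{\bar z_{r,s}}h(iI_n) $. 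As $ W $ ranges over all complex symmetric matrices these exhaust the antiholomorphic partials, so $ (X.F_h)(I_{2n})=0 $ for all $ X\in\mathfrak p_\C^- $ if and only if $ \bar\partial h(iI_n)=0 $. The one point requiring care is to keep the intrinsic complex structure of $ \calH_n $ separate from the complexification of $ \mathfrak g $ (the two occurrences of $ i $ play different roles); everything else is bookkeeping.

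Combining the last two paragraphs, $ \mathfrak p_\C^-F_f=0 $ holds iff $ \bar\partial\big(f\big|_\rho g\big)(iI_n)=0 $ for every $ g\in\Sp_{2n}(\R) $, iff $ \bar\partial f $ vanishes at every point $ g.iI_n $, iff (by transitivity of the action \eqref{eq:018} together with the holomorphy-preservation noted above) $ f $ is holomorphic on $ \calH_n $. This establishes (ii) and completes the proof.
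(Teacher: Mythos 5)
Your proof is correct and takes essentially the same route as the paper: the paper treats (i), (iii), (iv) as elementary -- precisely the verifications you spell out -- and for (ii) it simply defers to the computation in \cite[proof of Lem.\,7]{asgari_schmidt01}, which is the same argument you sketch (identify $ \mathfrak p_\C^- $ as the matrices $ \left(\begin{smallmatrix}W&-iW\\-iW&-W\end{smallmatrix}\right) $ with $ W $ complex symmetric, differentiate along one-parameter subgroups, and observe that what survives is the Cauchy--Riemann operator $ \sum_{r\leq s}W_{r,s}\partial_{\bar z_{r,s}} $). Your reduction to the identity via the cocycle identity $ F_f(g\spacedcdot)=F_{f|_\rho g} $ is a tidy way to organize that computation, but it is the same approach, carried out at the same (or a slightly greater) level of detail than the paper itself provides.
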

	
	\begin{proof}
		The claims \ref{lem:021:1}, \ref{lem:021:3}, and \ref{lem:021:4}  are elementary, and \ref{lem:021:2} is proved by generalizing \cite[proof of Lem.\,7]{asgari_schmidt01} in a straightforward way. We leave the details to the reader.
 	\end{proof}
	
	Let $ \mathcal A_{cusp}(\Gamma\backslash\Sp_{2n}(\R),\rho,V)^{\mathfrak p_\C^-} $ denote the space of smooth functions $ \varphi:\Sp_{2n}(\R)\to V $ with the following properties:
	\begin{enumerate}[label=\textup{(V\arabic*)}]
		\item\label{enum:022:1} $ \varphi(\gamma\spacedcdot)=\varphi $ for all $ \gamma\in\Gamma $.
		\item\label{enum:022:2} $ \mathfrak p_\C^-\varphi=0 $.
		\item\label{enum:022:3} $ \varphi(gk)=\sigma_K(k)^{-1}\varphi(g) $ for all $ g\in\Sp_{2n}(\R) $ and $ k\in K $.
		\item\label{enum:022:4} $ \varphi $ is cuspidal, i.e., for every proper $ \Q $-parabolic subgroup $ P $ of $ \Sp_{2n} $, we have
		\[ \int_{(\Gamma\,\cap\, U_P(\R))\backslash U_P(\R)}\varphi(ug)\,du=0,\qquad g\in\Sp_{2n}(\R), \]
		where $ U_P $ is the unipotent radical of $ P $.
		\item\label{enum:022:5} $ \varphi $ is bounded, i.e., $ \sup_{g\in\Sp_{2n}(\R)}\norm{\varphi(g)}<\infty $.
	\end{enumerate}

	The following lemma is a generalization of \cite[Prop.\,4.1]{zunar23} from the scalar-valued to the vector-valued case.
	
	\begin{lemma}\label{lem:058}
		Suppose that $ \omega_n>0 $. Then, the assignment $ f\mapsto F_f $ defines a unitary isomorphism 
		\[ \Phi_{\rho,\Gamma}:S_\rho(\Gamma)\to\mathcal A_{cusp}(\Gamma\backslash\Sp_{2n}(\R),\rho,V)^{\mathfrak p_\C^-},\] 
		where $ \mathcal A_{cusp}(\Gamma\backslash\Sp_{2n}(\R),\rho,V)^{\mathfrak p_\C^-} $ is regarded as a subspace of the Hilbert space $ L^2(\Gamma\backslash\Sp_{2n}(\R),V) $.
	\end{lemma}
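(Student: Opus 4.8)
The plan is to verify that $f\mapsto F_f$ maps $S_\rho(\Gamma)$ into the target space (conditions \ref{enum:022:1}--\ref{enum:022:5}), to exhibit the inverse assignment, and to check that the map is isometric. Almost everything is immediate from Lemmas \ref{lem:021} and \ref{lem:026}, and the only substantial point is cuspidality \ref{enum:022:4}.

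Let $f\in S_\rho(\Gamma)$. Then \ref{enum:022:1} follows from \ref{def:016:1} via Lem.\,\ref{lem:021}\ref{lem:021:1}; smoothness of $F_f$ and \ref{enum:022:2} follow from the holomorphy of $f$ via Lem.\,\ref{lem:021}\ref{lem:021:2}; \ref{enum:022:3} is exactly Lem.\,\ref{lem:021}\ref{lem:021:3}; and \ref{enum:022:5} follows from Lem.\,\ref{lem:021}\ref{lem:021:4} together with the boundedness $\sup_{z}\norm{\rho(y^{\frac12})f(z)}<\infty$ supplied by Lem.\,\ref{lem:026}. For the inverse, given $\varphi$ in the target space I would set $f(x+iy)=\rho(y^{\frac12})^{-1}\varphi(n_xa_y)$; condition \ref{enum:022:3} gives $F_f=\varphi$ at once, after which Lem.\,\ref{lem:021}\ref{lem:021:2} makes $f$ holomorphic, Lem.\,\ref{lem:021}\ref{lem:021:1} with \ref{enum:022:1} gives \ref{def:016:1}, and Lem.\,\ref{lem:021}\ref{lem:021:4} with \ref{enum:022:5} supplies the boundedness hypothesis of Lem.\,\ref{lem:026}; that lemma then places $f$ in $S_\rho(\Gamma)$. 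Thus $\Phi_{\rho,\Gamma}$ is a linear bijection. Note that surjectivity never invokes \ref{enum:022:4}: the content of the statement is precisely that cuspidality is forced by holomorphy and boundedness.

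For unitarity, since $\Gamma$ has finite covolume and $F_f$ is bounded, $F_f\in L^2(\Gamma\backslash\Sp_{2n}(\R),V)$. Writing $g=n_xa_yk$ and using \eqref{eq:025}, the covariance \ref{enum:022:3} and the unitarity of $\sigma_K$ (Lem.\,\ref{lem:012}\ref{lem:012:2}) make $\scal{F_{f_1}(g)}{F_{f_2}(g)}$ independent of $k$ and equal to $\scal{\rho(y^{\frac12})f_1(z)}{\rho(y^{\frac12})f_2(z)}$; integrating over $K$ (of total mass $1$) recovers exactly the Petersson product \eqref{eq:026}.

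The main obstacle is \ref{enum:022:4}. I would argue as follows. Every proper $\Q$-parabolic $P$ of $\Sp_{2n}$ stabilizes a nonzero isotropic $\Q$-subspace $W$; clearing denominators and using that $\Sp_{2n}(\Z)$ acts transitively on primitive isotropic sublattices, I may write $W=\delta^{-1}W_0$ with $\delta\in\Sp_{2n}(\Z)$ and $W_0$ the span of the first $r=\dim W$ standard isotropic vectors. The center $Z$ of the unipotent radical of the maximal parabolic $P_{W_0}\supseteq\delta P\delta^{-1}$ consists of the symmetric translations $n_{\iota(S')}$, $S'\in\mathrm{Sym}_r(\R)$, where $\iota(S')=\left(\begin{smallmatrix}S'&0\\0&0\end{smallmatrix}\right)$; since $U_P\supseteq U_{P_W}\supseteq\delta^{-1}Z\delta$, the latter is a normal subgroup of $U_P$ lying in $\Gamma$ as a full-rank lattice (this is exactly where the defining property of the level $N_\Gamma$ enters). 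Factoring the constant-term integral of $F_f$ along $P$ through $\delta^{-1}Z\delta$ and using the elementary identity $F_{f|_\rho\delta^{-1}}(g)=F_f(\delta^{-1}g)$, the inner integral becomes, for $g=n_{x_0}a_{y_0}k_0$,
\[ \mathrm{const}\cdot\sigma_K(k_0)^{-1}\rho(y_0^{\frac12})\int\left(f\big|_\rho\delta^{-1}\right)\!\left(x_0+\iota(S')+iy_0\right)dS', \]
an average over a period torus in the $S'$-directions. By orthogonality of characters this isolates precisely the Fourier coefficients $a_T(\delta^{-1})$ of \eqref{eq:015} indexed by symmetric $T$ whose $r\times r$ block vanishes; for such $T$ the Koecher principle gives $T\geq0$, whence the vanishing of the first $r$ diagonal entries forces the first $r$ rows and columns to vanish, so $\mathrm{rank}\,T\leq n-r<n$, and hence $a_T(\delta^{-1})=0$ by \ref{def:016:2}. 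Therefore the inner integral vanishes for every argument, the constant term of $F_f$ along $P$ is zero, and $F_f$ is cuspidal. The delicate part is the bookkeeping of the unipotent lattice $\Gamma\cap\delta^{-1}Z\delta$ against the level $N_\Gamma$, so that the character average cleanly selects the modes with vanishing $r\times r$ block; once this is arranged, Koecher and \ref{def:016:2} finish the argument.
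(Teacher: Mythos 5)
Your proposal is correct and takes essentially the same route as the paper: the forward direction, the inverse $f(x+iy)=\rho\big(y^{\frac12}\big)^{-1}\varphi(n_xa_y)$, and the isometry computation are exactly the paper's, and the cuspidality argument you spell out is precisely the one the paper invokes by citing \cite[proof of Lem.\,5]{asgari_schmidt01}. One minor simplification: once character orthogonality selects the coefficients $a_T(\delta^{-1})$ with vanishing upper-left $r\times r$ block, such $T$ cannot be positive definite (a positive definite matrix has positive diagonal entries), so \ref{def:016:2} kills these coefficients directly, with no need for the Koecher principle.
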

	
	\begin{proof}
		Given $ f\in S_\rho(\Gamma) $, the function $ F_f:\Sp_{2n}(\R)\to V $ is obviously smooth, satisfies \ref{enum:022:1}--\ref{enum:022:3} and \ref{enum:022:5} by Lem.\,\ref{lem:021} and \eqref{eq:016}, and satisfies \ref{enum:022:4} by the argument given in \cite[proof of Lem.\,5]{asgari_schmidt01}. Moreover, $ \norm{F_f}_{L^2(\Gamma\backslash\Sp_{2n}(\R),V)}=\norm f_{S_\rho(\Gamma)} $ by \eqref{eq:025}, Lem.\,\ref{lem:021}\ref{lem:021:4}, and \eqref{eq:026}. Thus, $ \Phi_{\rho,\Gamma} $ is a well-defined linear isometry.
		
		On the other hand, given $ \varphi\in\mathcal A_{cusp}(\Gamma\backslash\Sp_{2n}(\R),\rho,V)^{\mathfrak p_\C^-} $, the function $ f:\calH_n\to V $, $ f(z)=\rho\Big(y^{-\frac12}\Big)\varphi(n_xa_y) $, obviously satisfies $ F_f=\varphi $. Moreover, $ f $ is holomorphic and satisfies \ref{def:016:1} by Lem.\,\ref{lem:021}\ref{lem:021:1}--\ref{lem:021:2}, satisfies \eqref{eq:016} by \ref{enum:022:5} and Lem.\,\ref{lem:021}\ref{lem:021:4}, and hence belongs to $ S_\rho(\Gamma) $ by Lem.\,\ref{lem:026}. Thus, $ \Phi_{\rho,\Gamma} $ is surjective.
	\end{proof}

	Given $ \varphi:\Sp_{2n}(\R)\to V $ and $ v\in V $, we define 
	\begin{equation}\label{eq:032}
		v^*=\scal\spacedcdot v\in V^*\qquad\text{and}\qquad v^*\varphi=\scal{\varphi(\spacedcdot)}v:\Sp_{2n}(\R)\to\C.
	\end{equation}
	In Sec.\ \ref{sec:042}, we will need the following lemma.
	
	\begin{lemma}\label{lem:029}
		Let $ \varphi:\Sp_{2n}(\R)\to V $ be such that \ref{enum:022:3} holds. Then, we have the following:
		\begin{enumerate}[label=\textup{(\roman*)}]
			\item\label{lem:029:1} If $ v^*\varphi $ vanishes identically for some $ v\in V\setminus\left\{0\right\} $, then $ \varphi $ vanishes identically.
			\item\label{lem:029:2} Suppose that $ \varphi $ does not vanish identically. Then, the assignment  
			\begin{equation}\label{eq:031}
				 v^*\mapsto v^*\varphi,\qquad v\in V,
			\end{equation}
			defines a $ K $-equivalence from $ \left(\sigma_K^*,V^*\right) $ to the right regular representation $ (R,U_\varphi) $ of $ K $ on $ U_\varphi=\left\{v^*\varphi:v\in V\right\}.$
		\end{enumerate}
	\end{lemma}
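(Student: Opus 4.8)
The plan is to reduce both parts to property \ref{enum:022:3} together with the unitarity and irreducibility of $\sigma_K$ (Lem.\,\ref{lem:012}\ref{lem:012:2}). The computational engine for everything is a single identity that I would establish first: for all $g\in\Sp_{2n}(\R)$, $k\in K$, and $v\in V$,
\[ (v^*\varphi)(gk)=\scal{\varphi(gk)}{v}=\scal{\sigma_K(k)^{-1}\varphi(g)}{v}=\scal{\varphi(g)}{\sigma_K(k)v}, \]
where the middle equality is \ref{enum:022:3} and the last one uses that $\sigma_K$ is unitary, so that $\sigma_K(k)^{-1}=\sigma_K(k)^*$ and the operator can be moved onto the second argument of the inner product.

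For \ref{lem:029:1}, I would argue as follows. If $v^*\varphi\equiv0$ for some $v\neq0$, then the displayed identity forces $\scal{\varphi(g)}{\sigma_K(k)v}=0$ for all $g$ and all $k$. Since $\sigma_K$ is irreducible and $v\neq0$, the vectors $\sigma_K(k)v$, $k\in K$, span a nonzero $K$-invariant subspace of $V$ and hence all of $V$; thus $\varphi(g)$ is orthogonal to every vector of $V$ for each $g$, i.e.\ $\varphi\equiv0$.

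For \ref{lem:029:2}, I would set $\Psi(v^*)=v^*\varphi$ and verify in turn that $\Psi\colon V^*\to U_\varphi$ is well defined and complex-linear (the conjugate-linearity of $v\mapsto v^*$ cancels that of $v\mapsto\scal{\varphi(\spacedcdot)}{v}$), surjective (by the very definition of $U_\varphi$), and injective (immediate from \ref{lem:029:1}, since $\varphi\not\equiv0$ forces $v=0$ whenever $\Psi(v^*)=0$). The intertwining property is where the identity pays off a second time: it gives $R(k)(v^*\varphi)=(\sigma_K(k)v)^*\varphi$, while the definition of the contragredient representation gives $\sigma_K^*(k)v^*=(\sigma_K(k)v)^*$, so that $\Psi(\sigma_K^*(k)v^*)=(\sigma_K(k)v)^*\varphi=R(k)\Psi(v^*)$. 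This exhibits $\Psi$ as the asserted $K$-equivalence.

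I do not expect a serious obstacle; the content is essentially bookkeeping. The one point that must be handled with care is keeping track of the conjugate-linearity of $v\mapsto v^*$ and ensuring that the transfer of $\sigma_K(k)^{-1}$ from $\varphi(g)$ onto $v$ matches exactly the contragredient action $\sigma_K^*(k)v^*=(\sigma_K(k)v)^*$, so that the intertwining comes out with the correct $k$ (and not $k^{-1}$).
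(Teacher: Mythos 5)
Your proposal is correct and follows essentially the same route as the paper: the same key identity $(v^*\varphi)(gk)=\scal{\varphi(g)}{\sigma_K(k)v}$ obtained from \ref{enum:022:3} and the unitarity of $\sigma_K$, the same span argument via irreducibility of $\sigma_K$ for \ref{lem:029:1}, and the same intertwining computation $R(k)(v^*\varphi)=(\sigma_K(k)v)^*\varphi=\Psi\left(\sigma_K^*(k)v^*\right)$ for \ref{lem:029:2}. Your extra attention to the cancellation of the two conjugate-linearities is a point the paper leaves implicit, but it is standard bookkeeping and does not change the argument.
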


	\begin{proof}
		\ref{lem:029:1} If the function $ v^*\varphi $ vanishes identically for some $ v\in V\setminus\left\{0\right\} $, then so does, for every $ k\in K $, the right translate
		\begin{equation}\label{eq:030}
			 \left(v^*\varphi\right)(\spacedcdot k)
			 \overset{\eqref{eq:032}}=\scal{\varphi(\spacedcdot k)}v
			\overset{\ref{enum:022:3}}=\scal{\sigma_K(k)^{-1}\varphi(\spacedcdot)}v=\scal{\varphi(\spacedcdot)}{\sigma_K(k)v}.
		\end{equation}
		Since the set $ \sigma_K(K)v $ spans $ V $ by the irreducibility of $ \sigma_K $, it follows that $ \varphi $ vanishes identically.
		
		\ref{lem:029:2} The linear operator in question is injective by \ref{lem:029:1}, is obviously surjective, and is $ K $-equivariant since
		by \eqref{eq:030}, for all $ v\in V $ and $ k\in K $, the assignment \eqref{eq:031} takes $ \sigma_K^*(k)v^* $ to $ R(k)\left(v^*\varphi\right) $.
 	\end{proof}

	\section{Cuspidal automorphic forms on $ \Sp_{2n}(\R) $}\label{sec:042}
	
	Following \cite[\S1.3 and \S1.8]{borel_jacquet79}, we define the $ (\mathfrak g,K) $-module $ \mathcal A_{cusp}(\Gamma\backslash\Sp_{2n}(\R)) $ of cuspidal automorphic forms for $ \Gamma $ to consist of smooth functions $ \varphi:\Sp_{2n}(\R)\to\C $ with the following properties:
	\begin{enumerate}[label=\textup{(A\arabic*)}]
		\item \label{enum:027:1} $ \varphi(\gamma\spacedcdot)=\varphi $ for all $ \gamma\in\Gamma $.
		\item\label{enum:027:2} $ \varphi $ is $ \mathcal Z(\mathfrak g) $-finite, i.e., $ \dim_\C\mathcal Z(\mathfrak g)\varphi<\infty $.
		\item\label{enum:027:3} $ \varphi $ is right $ K $-finite, i.e., $ \dim_\C\mathrm{span}_\C\left\{\varphi(\spacedcdot k):k\in K\right\}<\infty $.
		\item\label{enum:027:4} $ \varphi $ is cuspidal.
		\item\label{enum:027:5} $ \varphi $ satisfies the following (under the above assumptions mutually equivalent) conditions: 
		\begin{enumerate}[label=\textup{(A5-\arabic*)}]
			\item $ \varphi $ is bounded.
			\item $ \varphi\in L^2(\Gamma\backslash\Sp_{2n}(\R)) $.
		\end{enumerate}
	\end{enumerate}

	Since $ \mathrm{Ad}(K)\mathfrak p_\C^-=\mathfrak p_\C^- $, the subspace 
	\[ \mathcal A_{cusp}(\Gamma\backslash\Sp_{2n}(\R))^{\mathfrak p_\C^-}=\left\{\varphi\in\mathcal A_{cusp}(\Gamma\backslash\Sp_{2n}(\R)):\mathfrak p_\C^-\varphi=0\right\} \] 
	of $ \mathcal A_{cusp}(\Gamma\backslash\Sp_{2n}(\R)) $ is $ K $-invariant. Let $ \mathcal A_{cusp}(\Gamma\backslash\Sp_{2n}(\R))^{\mathfrak p_\C^-}_{[\rho_K]} $ denote its $ \rho_K $-isotypic component, i.e., the algebraic sum of its $ K $-invariant subspaces that are equivalent to $ \rho_K $. In the following lemma, we use the notation \eqref{eq:032}.
	
	\begin{lemma}\label{lem:036}
		The assignment 
		\begin{equation}\label{eq:033}
			\varphi\otimes v^*\mapsto v^*\varphi
		\end{equation}
		defines a $ K $-equivariant linear isomorphism
		\[ \Psi_{\rho,\Gamma}:\mathcal A_{cusp}(\Gamma\backslash\Sp_{2n}(\R),\rho,V)^{\mathfrak p_\C^-}\otimes_\C V^*\to\mathcal A_{cusp}(\Gamma\backslash\Sp_{2n}(\R))^{\mathfrak p_\C^-}_{[\rho_K]}, \]
		where the action of $ K $ on $ \mathcal A_{cusp}(\Gamma\backslash\Sp_{2n}(\R),\rho,V)^{\mathfrak p_\C^-}\otimes_\C V^* $ is by $ 1\otimes \sigma_K^* $.
	\end{lemma}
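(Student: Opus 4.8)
The plan is to show that $\Psi_{\rho,\Gamma}$ is a well-defined $K$-equivariant linear map and then to prove bijectivity, the surjectivity via an explicit inverse on each irreducible $K$-summand. First I would note that $(\varphi,v^*)\mapsto v^*\varphi$ is bilinear, hence descends to the tensor product, and check that $v^*\varphi=\scal{\varphi(\spacedcdot)}{v}$ lands in the target. Conditions \ref{enum:027:1}, \ref{enum:027:4}, and \ref{enum:027:5} pass from $\varphi$ to $v^*\varphi$ by applying the continuous linear functional $\scal{\spacedcdot}{v}$, and $\mathfrak p_\C^-(v^*\varphi)=v^*(\mathfrak p_\C^-\varphi)=0$ since $\mathfrak p_\C^-$ acts by left-invariant differential operators that commute with $\scal{\spacedcdot}{v}$. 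Right $K$-finiteness \ref{enum:027:3} and membership in the $\rho_K$-isotypic component follow from Lem.\,\ref{lem:029}\ref{lem:029:2}: the space $U_\varphi=\{v^*\varphi:v\in V\}$ is $K$-equivalent to $\left(\sigma_K^*,V^*\right)$, which is equivalent to $\rho_K$ by Lem.\,\ref{lem:012}\ref{lem:012:2}. The same lemma gives the $K$-equivariance of $\Psi_{\rho,\Gamma}$ for the action $1\otimes\sigma_K^*$ on the source.

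The only condition needing real work is $\mathcal Z(\mathfrak g)$-finiteness \ref{enum:027:2}, which is absent from \ref{enum:022:1}--\ref{enum:022:5}. Fixing a highest weight vector $v^{top}$ of $\rho$, the function $\left(v^{top}\right)^*\varphi$ is, by Lem.\,\ref{lem:012}\ref{lem:012:3} together with Lem.\,\ref{lem:029}\ref{lem:029:2}, a highest weight vector (of weight $\sum_{r=1}^n\omega_re_r$) in $U_\varphi$ for the right $K$-action, hence annihilated by $\mathfrak k_\C^+$; by \ref{enum:022:2} it is also annihilated by $\mathfrak p_\C^-$. With respect to the positive system $\Delta_K^+\cup\left(-\Delta_n^+\right)$ it is therefore a highest weight vector for $\mathfrak g_\C$, so it generates a standard cyclic $\mathfrak g_\C$-module (cf.\ Lem.\,\ref{lem:011}\ref{lem:011:2}) on which $\mathcal Z(\mathfrak g)$ acts by the associated infinitesimal character. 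Since $\mathcal Z(\mathfrak g)$ commutes with the right $K$-action and $U_\varphi$ is the $K$-span of $\left(v^{top}\right)^*\varphi$, every $v^*\varphi$ is a $\mathcal Z(\mathfrak g)$-eigenvector for that character, yielding \ref{enum:027:2}. I expect this infinitesimal-character step to be the main obstacle.

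For injectivity, suppose $\sum_i v_i^*\varphi_i\equiv0$ with $\{v_i^*\}$, equivalently $\{v_i\}$, linearly independent. Right-translating by $k\in K$ and using \ref{enum:022:3} gives $\sum_i\scal{\varphi_i(g)}{\sigma_K(k)v_i}=0$, equivalently $\sum_i\scal{\sigma_K(k)v_i}{\varphi_i(g)}=0$, for all $g\in\Sp_{2n}(\R)$ and $k\in K$. Multiplying by $\overline{\scal{\sigma_K(k)v_j}{b}}$, integrating over $K$, and invoking Schur orthogonality for the irreducible unitary representation $\sigma_K$ yields a relation of the form $\sum_i\scal{v_j}{v_i}\varphi_i(g)=0$ for all $j$ and $g$; since the Gram matrix of $\{v_i\}$ is invertible, all $\varphi_i\equiv0$, so the original tensor vanishes.

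For surjectivity, it suffices to cover each irreducible $K$-summand $W\cong\rho_K$ of the target. Fixing a $K$-isomorphism $T:V^*\to W$, I would define $\varphi:\Sp_{2n}(\R)\to V$ by letting $\varphi(g)$ be the Riesz vector of the conjugate-linear functional $v\mapsto\left(Tv^*\right)(g)$, i.e.\ $\scal{\varphi(g)}{v}=\left(Tv^*\right)(g)$. The $K$-equivariance of $T$ then forces $\varphi(gk)=\sigma_K(k)^{-1}\varphi(g)$, while smoothness, \ref{enum:022:1}, \ref{enum:022:4}, \ref{enum:022:5}, and $\mathfrak p_\C^-\varphi=0$ are inherited coordinatewise from $W\subseteq\mathcal A_{cusp}(\Gamma\backslash\Sp_{2n}(\R))^{\mathfrak p_\C^-}$; thus $\varphi\in\mathcal A_{cusp}(\Gamma\backslash\Sp_{2n}(\R),\rho,V)^{\mathfrak p_\C^-}$ and $v^*\varphi=Tv^*$, so $W\subseteq\operatorname{image}(\Psi_{\rho,\Gamma})$. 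Letting $W$ range over the isotypic summands gives surjectivity, completing the proof that $\Psi_{\rho,\Gamma}$ is a $K$-equivariant linear isomorphism.
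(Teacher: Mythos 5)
Your proof is correct, and its surjectivity step is essentially the paper's: the paper likewise decomposes $ \mathcal A_{cusp}(\Gamma\backslash\Sp_{2n}(\R))^{\mathfrak p_\C^-}_{[\rho_K]} $ into irreducible $ K $-subspaces $ U_r $, fixes $ K $-equivalences $ E_r:\left(\sigma_K^*,V^*\right)\to U_r $, and defines preimages $ \varphi_r $ by exactly your Riesz-type formula $ \scal{\varphi_r(g)}v=\left(E_r(v^*)\right)(g) $. The other two steps take genuinely different routes. For $ \mathcal Z(\mathfrak g) $-finiteness, both you and the paper reduce to a highest weight vector $ v^{top} $ and use that $ \left(v^{top}\right)^*\varphi $ is annihilated by $ \mathfrak k_\C^+ $ and $ \mathfrak p_\C^- $; the paper then quotes $ \mathcal Z(\mathfrak g)\subseteq\mathcal U(\mathfrak k)\oplus\mathcal U(\mathfrak g)\left(\mathfrak p_\C^-\oplus\mathfrak k_\C^+\right) $ \cite[Lem.\,8.17]{knapp86} and concludes from right $ K $-finiteness that $ \mathcal Z(\mathfrak g)\left(v^{top}\right)^*\varphi\subseteq\mathcal U(\mathfrak k)\left(v^{top}\right)^*\varphi $ is finite-dimensional, whereas you view $ \left(v^{top}\right)^*\varphi $ as a highest weight vector for the positive system $ \Delta_K^+\cup\left(-\Delta_n^+\right) $ and invoke the infinitesimal character of the standard cyclic module it generates. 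Your version yields the stronger conclusion that $ \mathcal Z(\mathfrak g) $ acts by a character on all of $ U_\varphi $, at the small cost of checking that $ \Delta_K^+\cup\left(-\Delta_n^+\right) $ is indeed a positive system for $ \Delta $ (it is: every element of it is positive on a suitable regular element of $ \mathfrak h_\C $). For injectivity, the paper argues indirectly: using that the $ U_r $ are in direct sum \cite[Prop.\,1.18(a)]{knapp_vogan95} and that $ \Psi_{\rho,\Gamma} $ restricts to bijections $ \C\varphi_r\otimes_\C V^*\to U_r $, it shows that the $ \varphi_r $ span the source space --- via a $ \mathfrak k_\C^+ $-invariants argument and Lem.\,\ref{lem:029}\ref{lem:029:1} --- and deduces injectivity from that spanning property. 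Your Schur-orthogonality computation proves injectivity directly, without appealing to the direct sum decomposition of the target, which decouples the two halves of bijectivity and is arguably more elementary; what it does not give you as a by-product is the paper's explicit statement that every element of $ \mathcal A_{cusp}(\Gamma\backslash\Sp_{2n}(\R),\rho,V)^{\mathfrak p_\C^-} $ is a linear combination of the $ \varphi_r $, though that is of course recoverable once bijectivity is known. (One cosmetic point: Lem.\,\ref{lem:029}\ref{lem:029:2} assumes $ \varphi\not\equiv0 $, so your well-definedness and $ \mathcal Z(\mathfrak g) $-finiteness arguments should dispose of the trivial case $ \varphi\equiv0 $ separately, as the paper implicitly does.)
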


	\begin{proof}
		The assignment \eqref{eq:033} is $ K $-equivariant by Lem.\,\ref{lem:029}\ref{lem:029:2}. Moreover, since $ \sigma_K^*\cong\rho_K $ by Lem.\ \ref{lem:012}\ref{lem:012:2}, it follows easily that the linear operator $ \Psi_{\rho,\Gamma} $ is well-defined, the only slightly non-trivial point being the $ \mathcal Z(\mathfrak g) $-finiteness of the function $ v^*\varphi $ given $ \varphi\in\mathcal A_{cusp}(\Gamma\backslash\Sp_{2n}(\R),\rho,V)^{\mathfrak p_\C^-} $ and $ v\in V $. To prove the latter, by the $ K $-equivariance of \eqref{eq:033}, we may assume that $ v $ is a highest weight vector for $ \rho $. Then, $ v^* $ is a highest weight vector for $ \sigma_K^* $ by Lem.\,\ref{lem:012}\ref{lem:012:3}, so $ \sigma_K^*(\mathfrak k_\C^+)v^*=0 $ by \S\ref{subsec:008}, hence, again by the $ K $-equivariance of \eqref{eq:033}, $ v^*\varphi $ is annihilated by $ \mathfrak k_\C^+ $. As $ v^*\varphi $ is moreover $ K $-finite on the right by Lem.\,\ref{lem:029}\ref{lem:029:2} and annihilated by $ \mathfrak p_\C^- $ by \ref{enum:022:2}, it is $ \mathcal Z(\mathfrak g) $-finite since $ \mathcal Z(\mathfrak g)\subseteq \mathcal U(\mathfrak k)\oplus\mathcal U(\mathfrak g)(\mathfrak p_\C^-\oplus\mathfrak k_\C^+) $ by \cite[Lem.\,8.17]{knapp86}.
		
		Next, to prove that $ \Psi_{\rho,\Gamma} $ is bijective, we note that $ \mathcal A_{cusp}(\Gamma\backslash\Sp_{2n}(\R))^{\mathfrak p_\C^-}_{[\rho_K]} $ has a direct sum decomposition
		\begin{equation}\label{eq:035}
			\mathcal A_{cusp}(\Gamma\backslash\Sp_{2n}(\R))^{\mathfrak p_\C^-}_{[\rho_K]}=\bigoplus_{r}U_r
		\end{equation}
		into irreducible $ K $-invariant subspaces $ U_r\overset K\cong (\rho_K,V)\overset K\cong(\sigma_K^*,V^*) $ (see, e.g., \cite[Prop.\,1.18(a)]{knapp_vogan95}). For each $ r $, we fix a $ K $-equivalence $ E_r:(\sigma_K^*,V^*)\to U_r $ and define a function $ \varphi_r:\Sp_{2n}(\R)\to V $ by requiring that for every $  g\in\Sp_{2n}(\R) $,
		\begin{equation}\label{eq:034}
			\scal{\varphi_r(g)}v=\left(E_r(v^*)\right)(g),\qquad v\in V.
		\end{equation}
		The function $ \varphi_r $ has the property \ref{enum:022:3} since for all $ k\in K $ and $ v\in V $, we have
 		\[ \begin{aligned}
 			\scal{\varphi_r(\spacedcdot k)}v
 			&\overset{\eqref{eq:034}}=R(k)E_r(v^*)
 			=E_r\sigma_K^*(k)v^*
 			=E_r\left(\sigma_K(k)v\right)^*\\
 			&\overset{\eqref{eq:034}}=\scal{\varphi_r(\spacedcdot)}{\sigma_K(k)v}
 			=\scal{\sigma_K(k)^{-1}\varphi_r(\spacedcdot)}{v},
 		\end{aligned} \]
 		where the last equality holds by the unitarity of $ \sigma_K $. Since moreover, for any $ v\in V $, the function $ \scal{\varphi_r(\spacedcdot)}v $ belongs to $ \mathcal A_{cusp}(\Gamma\backslash\Sp_{2n}(\R))^{\mathfrak p_\C^-} $, it follows that the function $ \varphi_r $ belongs to $ \mathcal A_{cusp}(\Gamma\backslash\Sp_{2n}(\R),\rho,V)^{\mathfrak p_\C^-} $. Thus, by \eqref{eq:034}, the operator $ \Psi_{\rho,\Gamma} $ restricts to a $ K $-equivalence 
 		\begin{equation}\label{eq:036}
 			\left(1\otimes\sigma_K^*,\C\varphi_r\otimes_\C V^*\right)\xrightarrow{\sim}\left(R,U_r\right).
 		\end{equation}
 		In particular, by \eqref{eq:035}, $ \Psi_{\rho,\Gamma} $ is surjective, the functions $ \varphi_r $ are linearly independent, and to prove that $ \Psi_{\rho,\Gamma} $ is injective, it suffices to prove that the functions $ \varphi_r $ span $ \mathcal A_{cusp}(\Gamma\backslash\Sp_{2n}(\R),\rho,V)^{\mathfrak p_\C^-} $. To this end, let $ \varphi\in\mathcal A_{cusp}(\Gamma\backslash\Sp_{2n}(\R),\rho,V)^{\mathfrak p_\C^-} $, and fix a highest weight vector $ v^{top} $ for $ \rho $. As in the first part of the proof, the function $ \left(v^{top}\right)^*\varphi\in\mathcal A_{cusp}(\Gamma\backslash\Sp_{2n}(\R))^{\mathfrak p_\C^-}_{[\rho_K]} $ is annihilated by $ \mathfrak k_\C^+ $, hence by \eqref{eq:035} it belongs to
 		\[	\bigoplus_r U_r^{\mathfrak k_\C^+}
 			\overset{\eqref{eq:036}}=\bigoplus_r\C\,\Psi_{\rho,\Gamma}\left(\varphi_r\otimes \left(v^{top}\right)^*\right)
 			=\bigoplus_r\C\left(v^{top}\right)^*\varphi_r. \]
 		By Lem.\,\ref{lem:029}\ref{lem:029:1}, it follows that $ \varphi $ belongs to $ \sum_r\C\varphi_r $, which proves the claim.
 	\end{proof}

	Let us fix a highest weight vector $ v^{top} $ for $ \rho $. By restricting the operator $ \Psi_{\rho,\Gamma} $ of Lem.\,\ref{lem:036} to the subspace of $ \mathfrak k_\C^+ $-invariants,	\[ \mathcal A_{cusp}(\Gamma\backslash\Sp_{2n}(\R),\rho,V)^{\mathfrak p_\C^-}\otimes_\C\, \C\left(v^{top}\right)^*, \]
	we obtain the following corollary.

	\begin{corollary}\label{cor:051}
		The assignment $ \varphi\mapsto\left(v^{top}\right)^*\varphi $
		defines a linear isomorphism 
		 \begin{equation}\label{eq:114}
		 	\mathcal A_{cusp}(\Gamma\backslash\Sp_{2n}(\R),\rho,V)^{\mathfrak p_\C^-}\cong
		 	\left(\mathcal A_{cusp}(\Gamma\backslash\Sp_{2n}(\R))_{[\rho_K]}^{\mathfrak p_\C^-}\right)^{\mathfrak k_\C^+},
		 \end{equation}
	 	where, on the right-hand side, we employ the notation \eqref{eq:113} for the subspace of $ \mathfrak k_\C^+ $-invariants.
	\end{corollary}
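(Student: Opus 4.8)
The plan is to deduce this directly from Lem.\,\ref{lem:036} by passing to $\mathfrak k_\C^+$-invariants on both sides. Since $\Psi_{\rho,\Gamma}$ is a $K$-equivariant isomorphism and all functions involved are right $K$-finite (so that the differentiated $\mathfrak k_\C$-action is well-defined on the relevant spaces), both $\Psi_{\rho,\Gamma}$ and its inverse intertwine the actions of $\mathfrak k_\C$; in particular $\Psi_{\rho,\Gamma}$ restricts to a linear isomorphism between the subspace of $\mathfrak k_\C^+$-invariants of its domain and that of its codomain. The codomain's $\mathfrak k_\C^+$-invariants are exactly the right-hand side $\left(\mathcal A_{cusp}(\Gamma\backslash\Sp_{2n}(\R))_{[\rho_K]}^{\mathfrak p_\C^-}\right)^{\mathfrak k_\C^+}$ appearing in \eqref{eq:114}, so it remains to identify the domain's invariants.

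First I would compute the $\mathfrak k_\C^+$-invariants of the domain. As the $K$-action there is $1\otimes\sigma_K^*$, the differentiated $\mathfrak k_\C$-action acts only on the second tensor factor, whence
\[ \left(\mathcal A_{cusp}(\Gamma\backslash\Sp_{2n}(\R),\rho,V)^{\mathfrak p_\C^-}\otimes_\C V^*\right)^{\mathfrak k_\C^+}=\mathcal A_{cusp}(\Gamma\backslash\Sp_{2n}(\R),\rho,V)^{\mathfrak p_\C^-}\otimes_\C\left(V^*\right)^{\mathfrak k_\C^+}. \]
By Lem.\,\ref{lem:012}\ref{lem:012:2} the representation $\sigma_K^*$ is irreducible, so by \S\ref{subsec:008} the space $\left(V^*\right)^{\mathfrak k_\C^+}$ is one-dimensional and spanned by a highest weight vector for $\sigma_K^*$; by Lem.\,\ref{lem:012}\ref{lem:012:3} that vector is $\left(v^{top}\right)^*$. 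Hence the domain's $\mathfrak k_\C^+$-invariants equal $\mathcal A_{cusp}(\Gamma\backslash\Sp_{2n}(\R),\rho,V)^{\mathfrak p_\C^-}\otimes_\C\C\left(v^{top}\right)^*$, which I would identify with $\mathcal A_{cusp}(\Gamma\backslash\Sp_{2n}(\R),\rho,V)^{\mathfrak p_\C^-}$ via $\varphi\mapsto\varphi\otimes\left(v^{top}\right)^*$.

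Composing this identification with the restriction of $\Psi_{\rho,\Gamma}$ and recalling from \eqref{eq:033} that $\Psi_{\rho,\Gamma}\!\left(\varphi\otimes\left(v^{top}\right)^*\right)=\left(v^{top}\right)^*\varphi$, I obtain precisely the asserted isomorphism $\varphi\mapsto\left(v^{top}\right)^*\varphi$ onto $\left(\mathcal A_{cusp}(\Gamma\backslash\Sp_{2n}(\R))_{[\rho_K]}^{\mathfrak p_\C^-}\right)^{\mathfrak k_\C^+}$. There is no serious obstacle here, as the statement is a formal consequence of Lem.\,\ref{lem:036}; the only point that deserves an explicit remark is that a $K$-equivariant isomorphism restricts to an isomorphism on $\mathfrak k_\C^+$-invariants, together with the one-dimensionality of $\left(V^*\right)^{\mathfrak k_\C^+}$ that pins the $\mathfrak k_\C^+$-invariants of the domain down to the single tensor slot $\C\left(v^{top}\right)^*$.
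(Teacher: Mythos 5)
Your proposal is correct and takes essentially the same route as the paper: the paper obtains the corollary precisely by restricting the $K$-equivariant isomorphism $\Psi_{\rho,\Gamma}$ of Lem.\,\ref{lem:036} to the subspace of $\mathfrak k_\C^+$-invariants $\mathcal A_{cusp}(\Gamma\backslash\Sp_{2n}(\R),\rho,V)^{\mathfrak p_\C^-}\otimes_\C\C\left(v^{top}\right)^*$. Your write-up only makes explicit what the paper leaves implicit, namely that $K$-equivariance between locally $K$-finite spaces gives $\mathfrak k_\C^+$-equivariance and that $\left(V^*\right)^{\mathfrak k_\C^+}=\C\left(v^{top}\right)^*$ is one-dimensional by Lem.\,\ref{lem:012}\ref{lem:012:3} and \S\ref{subsec:008}.
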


	The space $ \mathcal A_{cusp}(\Gamma\backslash\Sp_{2n}(\R))^{\mathfrak p_\C^-}_{[\rho_K]} $ has the following description in terms of the spectral decomposition of $ L^2_{cusp}(\Gamma\backslash\Sp_{2n}(\R)) $.
	
	\begin{lemma}\label{lem:111}
		We have 
		\begin{equation}\label{eq:038}
			\mathcal A_{cusp}(\Gamma\backslash\Sp_{2n}(\R))^{\mathfrak p_\C^-}_{[\rho_K]}=\left(L^2_{cusp}(\Gamma\backslash\Sp_{2n}(\R))_{[\pi_\rho^*]}\right)_{[\rho_K]}, 
		\end{equation}
		where, on the right-hand side, we employ twice the notation $ H_{[\tau]} $, introduced in \S\ref{subsec:110}, for the $ \tau $-isotypic component of a unitary representation $ H $.
	\end{lemma}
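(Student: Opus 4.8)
The plan is to transport both sides into the discretely decomposed cuspidal $ L^2 $-spectrum and then to isolate the single irreducible constituent $ \pi_\rho^* $ by means of the characterization in Lem.\,\ref{lem:011}\ref{lem:011:2}.

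First I would invoke the two standard structural facts about the cuspidal spectrum (cf.\ \cite{borel_jacquet79} and the work of Gelfand and Piatetski-Shapiro): the Hilbert space $ L^2_{cusp}(\Gamma\backslash\Sp_{2n}(\R)) $ decomposes as a Hilbert-space direct sum of irreducible unitary representations of $ \Sp_{2n}(\R) $, each occurring with finite multiplicity $ m(\pi)\in\Z_{\geq0} $, and the $ (\mathfrak g,K) $-module $ \mathcal A_{cusp}(\Gamma\backslash\Sp_{2n}(\R)) $ of cuspidal automorphic forms is precisely the module $ L^2_{cusp}(\Gamma\backslash\Sp_{2n}(\R))_K $ of $ K $-finite vectors. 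Passing to $ K $-finite vectors turns the spectral decomposition into an algebraic direct sum
\[ \mathcal A_{cusp}(\Gamma\backslash\Sp_{2n}(\R))=\bigoplus_{[\pi]}m(\pi)\,\pi_K, \]
where $ [\pi] $ ranges over the equivalence classes of irreducible unitary representations of $ \Sp_{2n}(\R) $. Since the orthogonal projection onto any isotypic component of $ L^2_{cusp}(\Gamma\backslash\Sp_{2n}(\R)) $ is $ \Sp_{2n}(\R) $-intertwining, hence commutes with the actions of $ \mathfrak g $ and of $ K $, both operations appearing in the statement — taking $ \mathfrak p_\C^- $-invariants and taking the $ \rho_K $-isotypic component — respect this decomposition.

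Next I would rewrite the left-hand side compatibly with the decomposition. Because $ \operatorname{Ad}(K)\mathfrak p_\C^-=\mathfrak p_\C^- $, the $ K $-span of any $ \mathfrak p_\C^- $-annihilated vector is again $ \mathfrak p_\C^- $-annihilated; thus the $ \rho_K $-isotypic component of the $ K $-stable subspace $ \mathcal A_{cusp}(\Gamma\backslash\Sp_{2n}(\R))^{\mathfrak p_\C^-} $ consists exactly of those vectors of $ K $-type $ \rho_K $ that are killed by $ \mathfrak p_\C^- $. Combined with the displayed decomposition, and using that the $ \mathfrak g $-action preserves each $ G $-isotypic summand $ m(\pi)\pi_K $, this gives
\[ \mathcal A_{cusp}(\Gamma\backslash\Sp_{2n}(\R))^{\mathfrak p_\C^-}_{[\rho_K]}=\bigoplus_{[\pi]}m(\pi)\left((\pi_K)_{[\rho_K]}\cap\ker\mathfrak p_\C^-\right), \]
while the right-hand side of the lemma equals $ m(\pi_\rho^*)\,\bigl((\pi_\rho^*)_K\bigr)_{[\rho_K]} $. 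It therefore suffices to determine, for each irreducible unitary $ \pi $, the space $ (\pi_K)_{[\rho_K]}\cap\ker\mathfrak p_\C^- $.

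The hard part — and the only genuinely representation-theoretic step — is the following dichotomy. If $ v\in(\pi_K)_{[\rho_K]} $ is nonzero with $ \mathfrak p_\C^- v=0 $, then its $ K $-span $ U=\operatorname{span}_\C\pi(K)v $ is a $ K $-invariant subspace equivalent to $ \rho_K $, and, again by $ \operatorname{Ad}(K)\mathfrak p_\C^-=\mathfrak p_\C^- $, it is annihilated by $ \mathfrak p_\C^- $. The uniqueness assertion of Lem.\,\ref{lem:011}\ref{lem:011:2} then forces $ \pi\cong\pi_\rho^* $; hence $ (\pi_K)_{[\rho_K]}\cap\ker\mathfrak p_\C^-=0 $ for every $ \pi\not\cong\pi_\rho^* $. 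For $ \pi=\pi_\rho^* $, Lem.\,\ref{lem:037} shows that $ \rho_K $ occurs with multiplicity one, so $ \bigl((\pi_\rho^*)_K\bigr)_{[\rho_K]} $ is exactly the $ \rho_K $-isotypic subspace furnished by Lem.\,\ref{lem:011}\ref{lem:011:2}, which is annihilated by $ \mathfrak p_\C^- $; consequently its intersection with $ \ker\mathfrak p_\C^- $ is all of it. Substituting these two facts into the displayed sum leaves only the $ \pi=\pi_\rho^* $ term, which equals $ m(\pi_\rho^*)\bigl((\pi_\rho^*)_K\bigr)_{[\rho_K]}=\left(L^2_{cusp}(\Gamma\backslash\Sp_{2n}(\R))_{[\pi_\rho^*]}\right)_{[\rho_K]} $, establishing the lemma. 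I expect the only delicate points to be the foundational identification $ \mathcal A_{cusp}=(L^2_{cusp})_K $ together with the discreteness of the cuspidal spectrum; once these are in place, the reduction to the single constituent $ \pi_\rho^* $ follows immediately from Lem.\,\ref{lem:011}.
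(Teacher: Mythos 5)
Your proposal reaches the result, and its decisive representation-theoretic step coincides with the paper's: given a nonzero vector of $K$-type $\rho_K$ killed by $\mathfrak p_\C^-$ inside an irreducible cuspidal constituent, its $K$-span (or rather an irreducible summand of it --- a priori it is a finite sum of copies of $\rho_K$) is annihilated by $\mathfrak p_\C^-$ because $\mathrm{Ad}(K)\mathfrak p_\C^-=\mathfrak p_\C^-$, so the uniqueness assertion of Lem.\,\ref{lem:011}\ref{lem:011:2} forces that constituent to be $\pi_\rho^*$, and Lem.\,\ref{lem:037} handles the reverse inclusion. Where you differ from the paper is in how you obtain a decomposition on which to run this dichotomy. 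The paper never invokes the spectral theory of all of $L^2_{cusp}(\Gamma\backslash\Sp_{2n}(\R))$: it fixes a single $\varphi$ in the left-hand side, notes that $\varphi\in L^2_{cusp}$ by \ref{enum:027:5}, and applies Harish-Chandra's finiteness result \cite[Lem.\,77]{harish66} to the closed $\Sp_{2n}(\R)$-cyclic subspace generated by $\varphi$, which splits into \emph{finitely many} irreducibles; projecting $\varphi$ onto each summand then runs the dichotomy. You instead invoke the Gelfand--Piatetski-Shapiro decomposition of the whole cuspidal spectrum with finite multiplicities, together with the identification $\mathcal A_{cusp}(\Gamma\backslash\Sp_{2n}(\R))=L^2_{cusp}(\Gamma\backslash\Sp_{2n}(\R))_K$.

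That identification, as stated, is false, and this is the one genuine defect in your write-up. In the paper's notation $H_K$ means the \emph{smooth $K$-finite} vectors, and such a vector in $L^2_{cusp}$ need not be $\mathcal Z(\mathfrak g)$-finite: taking unit vectors $v_j$ of one fixed $K$-type in infinitely many pairwise non-isomorphic constituents with Casimir eigenvalues $\lambda_j\to\infty$, the vector $\sum_j e^{-\abs{\lambda_j}}v_j$ is smooth and $K$-finite but generates an infinite-dimensional $\mathcal Z(\mathfrak g)$-module. For the same reason, your assertion that ``passing to $K$-finite vectors turns the spectral decomposition into an algebraic direct sum'' fails for $L^2_{cusp}(\Gamma\backslash\Sp_{2n}(\R))_K$. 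It is, however, true for $\mathcal A_{cusp}(\Gamma\backslash\Sp_{2n}(\R))$ itself, precisely because $\mathcal Z(\mathfrak g)$-finiteness is condition \ref{enum:027:2} of the definition: it confines the components of an automorphic form to finitely many infinitesimal characters, and Harish-Chandra's finiteness theorems together with the finite multiplicities then leave only finitely many nonzero components. So your argument is repaired simply by carrying \ref{enum:027:2} along rather than discarding it --- and this is exactly the subtlety the paper's route sidesteps, since \cite[Lem.\,77]{harish66} takes a single $\mathcal Z(\mathfrak g)$-finite, $K$-finite function as input and outputs the finite decomposition directly. One further point you gloss over: for the inclusion of the right-hand side into the left, both arguments need that $K$-finite vectors in cuspidal constituents are genuinely cuspidal automorphic forms (cuspidality and boundedness included, not just annihilation by $\mathfrak p_\C^-$); the paper points to \cite[end of \S2.2]{borel_jacquet79} for this, and your appeal to the ``foundational facts'' should be made to cover it explicitly.
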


	\begin{proof}[Proof of Lem.\,\ref{lem:111}]
		Let	$ \varphi\in\mathcal A_{cusp}(\Gamma\backslash\Sp_{2n}(\R))^{\mathfrak p_\C^-}_{[\rho_K]}\setminus\left\{0\right\} $, and let $ H $ denote the smallest closed $ \Sp_{2n}(\R) $-invariant subspace of $ L^2_{cusp}(\Gamma\backslash\Sp_{2n}(\R)) $ containing $ \varphi $. By Harish-Chandra's result \cite[Lem.\,77]{harish66} (see also \cite[Lem.\,4.2]{zunar23}), there exist $ m\in\Z_{>0} $ and mutually orthogonal irreducible closed $ \Sp_{2n}(\R) $-invariant subspaces $ H_1,\ldots,H_m $ of $ H $ such that $ H=\bigoplus_{r=1}^mH_r $. For each $ r\in\left\{1,\ldots,m\right\} $, the orthogonal projection $ p_r:H\twoheadrightarrow H_r $ is $ \Sp_{2n}(\R) $-equivariant, hence $ \varphi_r:=p_r(\varphi) $ is annihilated by $ \mathfrak p_\C^- $ and generates a non-trivial $ K $-invariant subspace $ \left<\varphi_r\right>_K $ of $ (H_r)_{[\rho_K]} $ that is also annihilated by $ \mathfrak p_\C^- $ since $ \mathrm{Ad}(K)\mathfrak p_\C^-=\mathfrak p_\C^- $. By Lem.\,\ref{lem:011}\ref{lem:011:2}, it follows that $ H_r $ is $ \Sp_{2n}(\R) $-equivalent to $ \pi_\rho^* $. Thus, 
		\[ \varphi=\sum_r\varphi_r\in\sum_r\left(H_r\right)_{[\rho_K]}\subseteq\left(L^2_{cusp}(\Gamma\backslash\Sp_{2n}(\R))_{[\pi_\rho^*]}\right)_{[\rho_K]}. \]
		This proves the left-to-right inclusion in \eqref{eq:038}. The reverse inclusion holds since by Lem.\,\ref{lem:037} and Lem.\,\ref{lem:011}\ref{lem:011:2}, the $ \rho_K $-isotypic component of $ \pi_\rho^* $ is annihilated by $ \mathfrak p_\C^- $ (see also \cite[end of \S2.2]{borel_jacquet79}).
	\end{proof}
	
	\section{K-finite matrix coefficients and their Poincaré series}\label{sec:105}
	
	Given a finite-dimensional complex Hilbert space $ U $ and a function $ \varphi\in L^1(\Sp_{2n}(\R), U) $, one proves as in \cite[beginning of \S4]{muic09} that the Poincaré series
	\begin{equation}\label{eq:118}
		P_\Gamma\varphi=\sum_{\gamma\in\Gamma}\varphi(\gamma\spacedcdot)
	\end{equation}
	converges absolutely almost everywhere on $ \Sp_{2n}(\R) $ and defines an element of $ L^1(\Gamma\backslash\Sp_{2n}(\R),U) $. Moreover, as a special case of Mili\v ci\'c's result \cite[Lem.\,6.6]{muic19}, by \cite[proof of Thm.\,3.10(i)]{muic09} and \cite[Lem.\,5.1]{zunar23} we have the following lemma.
	
	\begin{lemma}\label{lem:040}
		Let $ (\pi,H) $ be an integrable representation of $ \Sp_{2n}(\R) $. Given $  h,h'\in H_K $, let $ c_{h,h'} $ be the $ K $-finite matrix coefficient of $ \pi $ given by \eqref{eq:039}. Then, we have the following:
		\begin{enumerate}[label=\textup{(\roman*)}]
			\item\label{lem:040:1} The Poincaré series $ P_\Gamma c_{h,h'} $ converges absolutely and uniformly on compact subsets of $ \Sp_{2n}(\R) $.
			\item\label{lem:040:2} We have
			\[  \left(L^2_{cusp}(\Gamma\backslash\Sp_{2n}(\R))_{[\pi]}\right)_K=\mathrm{span}_\C\left\{P_\Gamma c_{h,h'}:h,h'\in H_K\right\}.  \]
		\end{enumerate}
	\end{lemma}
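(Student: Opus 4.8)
The plan is to recognize the statement as the specialization to $\Sp_{2n}(\R)$ of Mili\v ci\'c's general result \cite[Lem.\,6.6]{muic19}. Since $\Sp_{2n}(\R)$ is connected and semisimple with finite center and $(\pi,H)$ is assumed integrable, the hypotheses of that result are met, so both assertions follow once the general statement is transported to our setting; the transport for a lattice commensurable with $\Sp_{2n}(\Z)$ is the content of \cite[Lem.\,5.1]{zunar23}. I would, however, indicate the mechanism behind each part rather than merely cite it.

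For \ref{lem:040:1}, I would begin from the observation, recalled at the start of this section, that the integrability of $(\pi,H)$ makes every $K$-finite matrix coefficient $c_{h,h'}$ lie in $L^1(\Sp_{2n}(\R))$, so that $P_\Gamma c_{h,h'}$ already converges absolutely almost everywhere to an element of $L^1(\Gamma\backslash\Sp_{2n}(\R))$. To upgrade this to absolute and locally uniform convergence, I would follow \cite[proof of Thm.\,3.10(i)]{muic09}: right $K$-finiteness reduces the problem to finitely many matrix coefficients, each of which is continuous and, by Harish-Chandra's estimates, dominated by a power $\Xi^{1+\varepsilon}$ of the spherical function $\Xi$ whose integrability is equivalent to that of $\pi$; the resulting continuous integrable majorant then yields uniform convergence on each compact set by a standard comparison of the Poincar\'e sum with the Haar integral.

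For \ref{lem:040:2}, the inclusion $\mathrm{span}_\C\{P_\Gamma c_{h,h'}\}\subseteq\left(L^2_{cusp}(\Gamma\backslash\Sp_{2n}(\R))_{[\pi]}\right)_K$ is the easy direction: each $P_\Gamma c_{h,h'}$ is cuspidal by the rapid decay of integrable matrix coefficients, and since $P_\Gamma$ intertwines right translation while the right translates of $c_{h,h'}$ (as $h$ varies) realize $\pi$, the function $P_\Gamma c_{h,h'}$ lies in the $K$-finite $\pi$-isotypic part. The reverse inclusion—that the Poincar\'e series \emph{exhaust} this part—is the crux. I would establish it by orthogonality: unfolding gives $\scal{P_\Gamma c_{h,h'}}{\psi}_{\Gamma\backslash\Sp_{2n}(\R)}=\scal{c_{h,h'}}{\psi}_{\Sp_{2n}(\R)}$ for every cuspidal $\psi$, and the Schur-type nondegeneracy of the matrix coefficients of the integrable representation $\pi$ forces any $\psi$ in the isotypic part that is orthogonal to all $P_\Gamma c_{h,h'}$ to vanish. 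The main obstacle, and the precise point where integrability is indispensable, is upgrading this density statement to an equality of \emph{algebraic} $K$-finite isotypic parts: one must combine the orthogonality with the $K$-equivariance of the Poincar\'e-series map so as to match multiplicities $K$-type by $K$-type, which is exactly what \cite[Lem.\,6.6]{muic19} supplies.
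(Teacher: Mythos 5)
Your proposal takes essentially the same route as the paper: the paper derives this lemma directly as a special case of Mili\v ci\'c's result \cite[Lem.\,6.6]{muic19}, combined with \cite[proof of Thm.\,3.10(i)]{muic09} for the absolute and locally uniform convergence in \ref{lem:040:1} and \cite[Lem.\,5.1]{zunar23} for the passage to groups $\Gamma$ commensurable with $\Sp_{2n}(\Z)$ --- precisely the three references you invoke. Your added sketches of the mechanisms behind these citations are consistent with those sources, so nothing further is needed.
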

	
	In particular, if $ \omega_n>2n $, we have the following description of the space \eqref{eq:038}.
	
	\begin{proposition}\label{prop:115}
		Suppose that $ \omega_n>2n $. Then, we have
		 \begin{equation}\label{eq:043}
		 	\left(L^2_{cusp}(\Gamma\backslash\Sp_{2n}(\R))_{[\pi_\rho^*]}\right)_{[\rho_K]}=\mathrm{span}_\C\left\{P_\Gamma c_{f_{1,v}^*,f^*}:v\in V\text{ and }f\in\calH^2(\rho)_K\right\}.
		 \end{equation}
	\end{proposition}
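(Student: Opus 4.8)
The plan is to apply Mili\v ci\'c's result, Lem.\,\ref{lem:040}, to the representation $(\pi_\rho^*,\calH^2(\rho)^*)$, which is integrable by Lem.\,\ref{lem:041} since $\omega_n>2n$, and then to pass to $\rho_K$-isotypic components (for the right regular action of $K$) on both sides. Explicitly, Lem.\,\ref{lem:040}\ref{lem:040:2} gives
\[ \left(L^2_{cusp}(\Gamma\backslash\Sp_{2n}(\R))_{[\pi_\rho^*]}\right)_K=\mathrm{span}_\C\left\{P_\Gamma c_{h,h'}:h,h'\in\left(\calH^2(\rho)^*\right)_K\right\}, \]
and the asserted identity \eqref{eq:043} should drop out once I check two things: (i) that taking the $\rho_K$-isotypic component of the left-hand side returns precisely the space $\left(L^2_{cusp}(\Gamma\backslash\Sp_{2n}(\R))_{[\pi_\rho^*]}\right)_{[\rho_K]}$ appearing in \eqref{eq:043}, and (ii) that taking it on the right-hand side amounts to restricting the \emph{first} argument $h$ to the $\rho_K$-isotypic subspace $\left(\calH^2(\rho)^*\right)_{[\rho_K]}=\left\{f_{1,v}^*:v\in V\right\}$ furnished by Lem.\,\ref{lem:116}\ref{lem:116:2}, while $h'=f^*$ ranges freely over $\left(\calH^2(\rho)^*\right)_K$ with $f\in\calH^2(\rho)_K$.

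For (ii), the key observation is that both the right regular action and the Poincar\'e series map touch only the first argument of the matrix coefficient: directly from \eqref{eq:039} one has $R(k)c_{h,h'}=c_{\pi_\rho^*(k)h,h'}$ for $k\in K$, while $P_\Gamma$ commutes with $R(k)$ because right translation commutes with the summation over $\Gamma$ in \eqref{eq:118}. Hence, for each fixed $h'$, the assignment $h\mapsto P_\Gamma c_{h,h'}$ is a $K$-equivariant map from $\left(\pi_\rho^*\big|_K,\left(\calH^2(\rho)^*\right)_K\right)$ into the right regular $K$-representation on $L^2_{cusp}(\Gamma\backslash\Sp_{2n}(\R))$. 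Applying the $K$-equivariant orthogonal $\rho_K$-isotypic projection $E$ on $\calH^2(\rho)^*$ and using uniqueness of the isotypic projection, I get that the $\rho_K$-isotypic projection of $P_\Gamma c_{h,h'}$ equals $P_\Gamma c_{Eh,h'}$. Consequently the $\rho_K$-isotypic component of the span on the right is spanned by the $P_\Gamma c_{h,h'}$ with $h\in\left(\calH^2(\rho)^*\right)_{[\rho_K]}$ and $h'\in\left(\calH^2(\rho)^*\right)_K$; writing $h=f_{1,v}^*$ via Lem.\,\ref{lem:116}\ref{lem:116:2} and $h'=f^*$ with $f\in\calH^2(\rho)_K$ produces exactly the right-hand side of \eqref{eq:043}.

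For (i), I would invoke that $\rho_K$ occurs in $\pi_\rho^*$ with multiplicity one (Lem.\,\ref{lem:037}) together with the finiteness of the multiplicity of the discrete series $\pi_\rho^*$ in the cuspidal spectrum (as in the proof of Lem.\,\ref{lem:111}, via Harish-Chandra \cite[Lem.\,77]{harish66}). These force $\left(L^2_{cusp}(\Gamma\backslash\Sp_{2n}(\R))_{[\pi_\rho^*]}\right)_{[\rho_K]}$ to be finite-dimensional and to consist of smooth $K$-finite vectors, hence to be contained in the $(\mathfrak g,K)$-module $\left(L^2_{cusp}(\Gamma\backslash\Sp_{2n}(\R))_{[\pi_\rho^*]}\right)_K$; one then checks readily that its $\rho_K$-isotypic component coincides with the $\rho_K$-isotypic component of that $(\mathfrak g,K)$-module, so the two a priori different notions agree. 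I expect this reconciliation in (i)---matching the Hilbert-space isotypic component in \eqref{eq:043} with the algebraic $\rho_K$-isotypic component of the module produced by Lem.\,\ref{lem:040}---to be the main point requiring care; the equivariance bookkeeping in (ii) is then routine, the only subtlety being to keep track that the right $K$-action sees $h$ and not $h'$.
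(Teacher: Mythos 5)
Your proposal is correct and follows essentially the same route as the paper: integrability (Lem.\,\ref{lem:041}) plus Mili\v ci\'c's spanning result (Lem.\,\ref{lem:040}\ref{lem:040:2}), then passage to the $\rho_K$-isotypic component by exploiting that $R(k)$ and $P_\Gamma$ act only on the first argument of $c_{h,h'}$, and finally the identification \eqref{eq:112} from Lem.\,\ref{lem:116}\ref{lem:116:2}. The only difference is presentational: the paper compresses your steps (i) and (ii) into a citation of the argument of \cite[proof of Prop.\,5.2]{zunar23}, whereas you reconstruct that argument explicitly (correctly flagging the reconciliation of the Hilbert-space isotypic component with the algebraic one via finite multiplicity).
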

	
	\begin{proof}
		By Lem.\,\ref{lem:041}, the representation $ \pi_\rho^* $ is integrable, hence Lem.\,\ref{lem:040}\ref{lem:040:2} and the argument of \cite[proof of Prop.\,5.2]{zunar23} imply that
		\[ \left(L^2_{cusp}(\Gamma\backslash\Sp_{2n}(\R))_{[\pi_\rho^*]}\right)_{[\rho_K]}=\mathrm{span}_\C\left\{P_\Gamma c_{h,h'}:h\in\left(\calH^2(\rho)^*\right)_{[\rho_K]},h'\in\left(\calH^2(\rho)^*\right)_K\right\}, \]
		from which \eqref{eq:043} follows by \eqref{eq:112}.
	\end{proof}

	\begin{corollary}\label{cor:106}
		Suppose that $ \omega_n>2n $. Let $ v^{top} $ be a highest weight vector for $ \rho $. Then, 
		\begin{equation}\label{eq:052}
			\left(\left(L^2_{cusp}(\Gamma\backslash\Sp_{2n}(\R))_{[\pi_\rho^*]}\right)_{[\rho_K]}\right)^{\mathfrak k_\C^+}=\left\{P_\Gamma c_{f_{1,v^{top}}^*,f^*}:f\in\calH^2(\rho)_K\right\},
		\end{equation}
		where $ f_{1,v^{top}}\in\calH^2(\rho)_K $ is defined by \eqref{eq:048}.
	\end{corollary}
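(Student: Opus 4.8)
The plan is to exploit the fact that right translation, and hence the $(\mathfrak g,K)$-action, acts on a Poincar\'e series of matrix coefficients through the \emph{first} variable of the coefficient. Since $P_\Gamma$ sums over left $\Gamma$-translates while $\mathfrak g_\C$ acts by right-invariant differentiation, these two operations commute, and from \eqref{eq:039} one computes
\[ X\cdot P_\Gamma c_{h,h'}=P_\Gamma c_{\pi_\rho^*(X)h,h'},\qquad X\in\mathfrak g_\C,\ h,h'\in\calH^2(\rho)^*. \]
I would first record this identity and then use it to settle the inclusion ``$\supseteq$''. By Lem.\,\ref{lem:116}\ref{lem:116:2} together with Lem.\,\ref{lem:012}\ref{lem:012:3}, the vector $f_{1,v^{top}}^*$ corresponds, under the $K$-equivalence $v^*\mapsto f_{1,v}^*$, to the highest weight vector $\left(v^{top}\right)^*$ of $\left(\sigma_K^*,V^*\right)$; hence $f_{1,v^{top}}^*$ is a highest weight vector of the $K$-type $\rho_K$ inside $\calH^2(\rho)^*$ and is therefore annihilated by $\mathfrak k_\C^+$ by \S\ref{subsec:008}. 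The displayed identity then gives $X\cdot P_\Gamma c_{f_{1,v^{top}}^*,f^*}=P_\Gamma c_{\pi_\rho^*(X)f_{1,v^{top}}^*,f^*}=0$ for all $X\in\mathfrak k_\C^+$, so each such Poincar\'e series is $\mathfrak k_\C^+$-invariant and lies in the left-hand side of \eqref{eq:052}.

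For the reverse inclusion I would start from the spanning statement of Prop.\,\ref{prop:115}. For each fixed $f\in\calH^2(\rho)_K$, the same computation shows that the linear map $T_f\colon V^*\to\left(L^2_{cusp}(\Gamma\backslash\Sp_{2n}(\R))_{[\pi_\rho^*]}\right)_{[\rho_K]}$, $T_f(v^*)=P_\Gamma c_{f_{1,v}^*,f^*}$, intertwines $\left(\sigma_K^*,V^*\right)$ with the right regular $K$-action (using Lem.\,\ref{lem:116}\ref{lem:116:2} to transport the $K$-action on $f_{1,v}^*$ to $\sigma_K^*$ on $v^*$). By Schur's lemma each image $T_f(V^*)$ is either zero or an irreducible copy of $\rho_K$ whose highest weight vector is precisely $T_f\left(\left(v^{top}\right)^*\right)=P_\Gamma c_{f_{1,v^{top}}^*,f^*}$. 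Since $V^*$ is generated over $\mathcal U(\mathfrak k)$ by $\left(v^{top}\right)^*$, we obtain $T_f(V^*)=\mathcal U(\mathfrak k)\,P_\Gamma c_{f_{1,v^{top}}^*,f^*}$, and hence, by Prop.\,\ref{prop:115},
\[ \left(L^2_{cusp}(\Gamma\backslash\Sp_{2n}(\R))_{[\pi_\rho^*]}\right)_{[\rho_K]}=\sum_{f}T_f(V^*)=\mathcal U(\mathfrak k)\cdot\mathrm{span}_\C\left\{P_\Gamma c_{f_{1,v^{top}}^*,f^*}:f\in\calH^2(\rho)_K\right\}. \]

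It remains to pass from ``these highest weight vectors generate the whole $\rho_K$-isotypic space'' to ``these highest weight vectors span its entire $\mathfrak k_\C^+$-invariant subspace'', and this is the step I expect to require the most care. Writing the target as a finite direct sum $\bigoplus_{j=1}^m W_j$ of copies of $\rho_K$ (finiteness and complete reducibility coming from the discreteness of the cuspidal spectrum together with the multiplicity-one statement of Lem.\,\ref{lem:037}), the $\mathfrak k_\C^+$-invariants form the $m$-dimensional space of highest weight vectors. Via the component projections $\bigoplus_j W_j\twoheadrightarrow W_j$, a family of highest weight vectors spanning a $d$-dimensional subspace generates a $K$-submodule isomorphic to exactly $d$ copies of $\rho_K$, whose highest weight subspace is again that same span; applying this to the generating family above forces $\mathrm{span}_\C\left\{P_\Gamma c_{f_{1,v^{top}}^*,f^*}:f\right\}$ to coincide with the full $\mathfrak k_\C^+$-invariant subspace. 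Finally, since $f\mapsto P_\Gamma c_{f_{1,v^{top}}^*,f^*}$ is linear in $f$ (the coefficient \eqref{eq:039} is conjugate-linear in its second argument and $f\mapsto f^*$ is conjugate-linear), the indicated set already equals its own span, which completes the identification in \eqref{eq:052}.
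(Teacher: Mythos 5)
Your proposal is correct and takes essentially the same route as the paper's proof: both rest on Prop.\,\ref{prop:115} combined with the $K$-equivariance of the map $v^*\mapsto P_\Gamma c_{f_{1,v}^*,f^*}$ (via Lem.\,\ref{lem:116}\ref{lem:116:2}) and the highest-weight facts of Lem.\,\ref{lem:012}\ref{lem:012:3} and \S\ref{subsec:008}. The paper merely compresses your two inclusions into a single step --- equivariance upgrades \eqref{eq:043} to the weight-space identity \eqref{eq:115}, which at the highest weight is exactly \eqref{eq:052} because the top weight space of a $\rho_K$-isotypic component coincides with its $\mathfrak k_\C^+$-invariants --- so your derived-action identity and Schur-plus-counting argument are a more hands-on rendering of the same mechanism (two harmless quibbles: the derived action of right translation is by \emph{left}-invariant, not right-invariant, differential operators, and finiteness of the multiplicity of $\rho_K$ is never actually needed).
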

	
	\begin{proof}
		It follows easily from Lem.\,\ref{lem:116}\ref{lem:116:2} that for every $ f\in\calH^2(\rho)_K $, the linear operator 
		\[ \left(\sigma_K^*,V^*\right)\to L^2_{cusp}(\Gamma\backslash\Sp_{2n}(\R)),\qquad
		 v^*\mapsto P_\Gamma c_{f_{1,v}^*,f^*}, \]
		is $ K $-equivariant. Thus, \eqref{eq:043} implies that for every weight $ \lambda $ of $ \rho_K $, we have
		\begin{equation}\label{eq:115}
			\left(\left(L^2_{cusp}(\Gamma\backslash\Sp_{2n}(\R))_{[\pi_\rho^*]}\right)_{[\rho_K]}\right)_\lambda=\mathrm{span}_\C\left\{P_\Gamma c_{f_{1,v}^*,f^*}:v^*\in \left(V^*\right)_\lambda,f\in\calH^2(\rho)_K\right\},
		\end{equation}
		where we use the notation $ H_\lambda $ for the weight $ \lambda $ subspace of a locally $ K $-finite representation $ H $ of $ K $ (cf.\ \cite[Prop.\,1.18(a)]{knapp_vogan95} and \cite[(4.4)]{knapp86}). In the case when $ \lambda $ is the highest weight $ \sum_{r=1}^n\omega_re_r $ of $ \rho_K\cong\sigma_K^* $ (see Lem.\,\ref{lem:012}\ref{lem:012:1}--\ref{lem:012:2}), this equality is \eqref{eq:052} by \S\ref{subsec:008} and Lem.\,\ref{lem:012}\ref{lem:012:3}.
	\end{proof}
	
	Suppose that $ \omega_n>n $. We define a positive constant
	\[ C_\rho=\norm{f_{1,v^{top}}}_{\calH^2(\rho)}^2=\norm{p_{1,v^{top}}}_{\calD^2(\rho)}^2
	\overset{\eqref{eq:117}}=\int_{\calD_n}\norm{\rho\left(I_n-w^*w\right)^{\frac12}v^{top}}^2\,d\mathsf v_\calD(w), \]
	where $ v^{top} $ is any highest weight vector for $ \rho $ such that $ \norm{v^{top}}=1 $.
	In the following proposition, we use the notation \eqref{eq:032} and, given a function $ \varphi $ with domain $ \Sp_{2n}(\R) $, write $ \check\varphi=\varphi\left(\spacedcdot^{-1}\right) $.
	
	\begin{proposition}\label{prop:049}
		Suppose that $ \omega_n>n $.
		Let $ f\in\calH^2(\rho)_K $ and $ v\in V $. Then, we have the following:
		\begin{enumerate}[label=\textup{(\roman*)}]
			\item\label{prop:049:1} The $ K $-finite matrix coefficient $ c_{f,f_{1,v}} $ of $ \pi_\rho $ is given by
			\[ c_{f,f_{1,v}}=C_\rho\, v^*\check F_f. \]
			\item\label{prop:049:2} The $ K $-finite matrix coefficient $ c_{f_{1,v}^*,f^*} $ of $ \pi_\rho^* $ is given by
			 \begin{equation}\label{eq:054}
			 	c_{f_{1,v}^*,f^*}=C_\rho\, v^*F_f. 
			 \end{equation}
		\end{enumerate}
	\end{proposition}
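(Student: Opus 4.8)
The plan is to deduce both statements from a single reproducing-type identity: for every $ h\in\calH^2(\rho) $ and every $ v\in V $,
\[ \scal{h}{f_{1,v}}_{\calH^2(\rho)}=C_\rho\scal{h(iI_n)}v.\qquad(\ast) \]
In other words, $ C_\rho^{-1}f_{1,v} $ plays the role of the reproducing kernel of $ \calH^2(\rho) $ at $ iI_n $ paired with $ v $. Granting $ (\ast) $, part \ref{prop:049:1} is immediate: taking $ h=\pi_\rho(g)f=f\big|_\rho g^{-1} $ and recalling \eqref{eq:047}, I get
\[ c_{f,f_{1,v}}(g)=\scal{\pi_\rho(g)f}{f_{1,v}}_{\calH^2(\rho)}=C_\rho\scal{\big(f\big|_\rho g^{-1}\big)(iI_n)}v=C_\rho\scal{F_f(g^{-1})}v=C_\rho\big(v^*\check F_f\big)(g). \]
Here $ h=\pi_\rho(g)f $ is a smooth but in general not $ K $-finite vector, so I would first remark that both sides of $ (\ast) $ are continuous linear functionals of $ h $ — the right-hand one because point evaluation at $ iI_n $ is bounded on the weighted Bergman space $ \calH^2(\rho) $ — and hence it is enough to verify $ (\ast) $ on the dense subspace $ \calH^2(\rho)_K $.

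To prove $ (\ast) $, I would transport it to the disc model via the unitary equivalence $ h\mapsto h\big|_\rho\ell^{-1} $ of $ \calH^2(\rho) $ with $ \calD^2(\rho) $. As $ \ell^{-1}=\begin{pmatrix}iI_n&iI_n\\-I_n&I_n\end{pmatrix} $ has automorphy factor $ I_n-w $ at $ w $ and maps $ 0 $ to $ iI_n $, one gets $ \big(h\big|_\rho\ell^{-1}\big)(0)=h(iI_n) $; together with $ \scal{h}{f_{1,v}}_{\calH^2(\rho)}=\scal{h|_\rho\ell^{-1}}{p_{1,v}}_{\calD^2(\rho)} $ (because $ f_{1,v}=p_{1,v}\big|_\rho\ell $), the identity $ (\ast) $ becomes
\[ \scal{p}{p_{1,v}}_{\calD^2(\rho)}=C_\rho\scal{p(0)}v,\qquad p\in\calD^2(\rho). \]
By continuity it suffices to check this for $ p\in\calD^2(\rho)_K $, and by the decomposition \eqref{eq:009} both sides vanish on $ \calD^2(\rho)_d $ for $ d\geq1 $ — the left-hand side by the mutual orthogonality of the summands, the right-hand side because a homogeneous $ p $ of degree $ d\geq1 $ has $ p(0)=0 $. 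This leaves the case $ p=p_{1,v'}\in\calD^2(\rho)_0 $, for which, by \eqref{eq:117} and Lem.\,\ref{lem:005}, the desired equality reads $ \scal{Mv'}v=C_\rho\scal{v'}v $, where
\[ M=\int_{\calD_n}\rho\left(I_n-w^*w\right)\,d\mathsf v_\calD(w). \]

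The crux, and the step I expect to be the main obstacle, is therefore to show that $ M=C_\rho\,\mathrm{Id}_V $. Substituting $ w\mapsto uwu^\top $ for $ u\in\U(n) $, invoking the invariance of $ \mathsf v_\calD $ under $ \ell\,k_u\ell^{-1} $ together with the identity $ I_n-(uwu^\top)^*(uwu^\top)=\overline u\left(I_n-w^*w\right)u^\top $, I obtain $ M=\rho(\overline u)\,M\,\rho(u^\top) $; since $ \rho(u^\top)=\rho(\overline u)^*=\rho(\overline u)^{-1}=\sigma_K(k_u)^{-1} $ for $ u\in\U(n) $ (by Lem.\,\ref{lem:005}, the unitarity of $ \rho\big|_{\U(n)} $, and \eqref{eq:024}), this says that $ M $ commutes with $ \sigma_K(K) $. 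As $ \sigma_K $ is irreducible (Lem.\,\ref{lem:012}\ref{lem:012:2}), Schur's lemma forces $ M=c\,\mathrm{Id}_V $, and evaluating on a unit highest weight vector gives
\[ c=\scal{Mv^{top}}{v^{top}}=\int_{\calD_n}\norm{\rho\left(I_n-w^*w\right)^{\frac12}v^{top}}^2\,d\mathsf v_\calD(w)=C_\rho. \]
Finally, part \ref{prop:049:2} follows from part \ref{prop:049:1} by contragredient bookkeeping: using $ \pi_\rho^*(g)f_{1,v}^*=(\pi_\rho(g)f_{1,v})^* $ and the definition of the inner product on $ \calH^2(\rho)^* $, one finds $ c_{f_{1,v}^*,f^*}(g)=\scal{f}{\pi_\rho(g)f_{1,v}}_{\calH^2(\rho)} $, which by unitarity of $ \pi_\rho(g) $ equals $ c_{f,f_{1,v}}(g^{-1}) $; part \ref{prop:049:1} then gives $ c_{f,f_{1,v}}(g^{-1})=C_\rho\big(v^*\check F_f\big)(g^{-1})=C_\rho\big(v^*F_f\big)(g) $, which is \eqref{eq:054}.
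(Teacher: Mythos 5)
Your proof is correct, and it takes a genuinely different route from the paper's. The paper, following Mui\'c, never writes down a reproducing identity: it embeds $(\pi_\rho,\calH^2(\rho))$ into the left regular representation on $L^2(\Sp_{2n}(\R),V)$ via $f\mapsto F_f$, invokes Harish-Chandra's theorem (\cite[Thm.\,1]{harish66}, using that $F_f$ is $\calZ(\frakg)$-finite and right $K$-finite) to write $F_f=F_f*\check\alpha$ and thereby realize $v^*\check F_f$ as a matrix coefficient $c_{f,h(v)}$ against an a priori unknown vector $h(v)$, pins down $h$ up to a scalar by applying Schur's lemma to the $K$-equivariant map $v\mapsto h(v)$ into $\calH^2(\rho)_{[\sigma_K]}$ (via Lem.\,\ref{lem:116}\ref{lem:116:1}), and finally shows by a linearity trick that the resulting scalar $\lambda(f)$ is independent of $f$ and equals $C_\rho$ upon evaluation at $1_{\Sp_{2n}(\R)}$. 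You instead prove outright that $C_\rho^{-1}f_{1,v}$ reproduces evaluation at $iI_n$ paired with $v$, working in the disc model and combining the mutual orthogonality of the graded pieces in \eqref{eq:009} with Schur's lemma for the finite-dimensional irreducible $K$-representation $\sigma_K$, applied to the operator $M=\int_{\calD_n}\rho\left(I_n-w^*w\right)d\mathsf v_\calD(w)$; your identities $M=\rho(\overline u)M\rho(u^\top)$ and $\rho(u^\top)=\sigma_K(k_u)^{-1}$ check out, and parts \ref{prop:049:1} and \ref{prop:049:2} then follow by the short computations you give (your deduction of \ref{prop:049:2} from \ref{prop:049:1} coincides with the paper's). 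Your route is more elementary and explicit---it avoids the convolution theorem and the regular-representation embedding entirely, and it makes the reproducing-kernel interpretation visible from the start, dovetailing with Sec.\,\ref{sec:007}---at the cost of two standard facts that you correctly flag but should justify in a complete write-up: boundedness of point evaluation on the weighted Bergman space $\calD^2(\rho)$ and density of the $K$-finite vectors, both needed for your continuity reduction to $\calD^2(\rho)_K$. The paper's softer argument needs neither and follows a template that works whenever an integrable discrete series is realized on a function space, with no explicit bounded model required.
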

	
	\begin{proof}
		We follow the idea of the proofs of \cite[Lem.\,3-5]{muic10} and \cite[Prop.\,5.3]{zunar23}. First, we note that the assignment $ f\mapsto F_f $ defines a unitary $ \Sp_{2n}(\R) $-equivalence $ E_\rho $ from $ \left(\pi_\rho,\calH^2(\rho)\right) $ onto an irreducible closed $ \Sp_{2n}(\R) $-subrepresentation $ H_\rho $ of the left regular representation $ L $ of $ \Sp_{2n}(\R) $ on $ L^2(\Sp_{2n}(\R),V) $.
		
		Next, let $ f\in\calH^2(\rho)_K\setminus\left\{0\right\} $. The function $ F_f\in\left(H_\rho\right)_K $ is $ \mathcal Z(\mathfrak g) $-finite and, by Lem.\,\ref{lem:021}\ref{lem:021:3}, right $ K $-finite, hence by \cite[Thm.\,1]{harish66} there exists a smooth, compactly supported function $ \alpha:\Sp_{2n}(\R)\to\C $ such that $ F_f=F_f*\check\alpha $. Thus, denoting by $ \mathrm{pr}_{H_\rho} $ the orthogonal projection from $ L^2(\Sp_{2n}(\R),V) $ onto $ H_\rho $, we have
		\[ \begin{aligned}
			\left(v^*\check F_f\right)(g)
			&=\scal{\left(F_f*\check\alpha\right)\left(g^{-1}\right)}v\\
			&=\int_{\Sp_{2n}(\R)}\alpha(g')\scal{F_f\left(g^{-1}g'\right)}v\,dg'\\
			&=\scal{L(g)F_f}{\overline\alpha v}_{L^2(\Sp_{2n}(\R),V)}\\
			&=\scal{L(g)F_f}{\mathrm{pr}_{H_\rho}\left(\overline\alpha v\right)}_{H_\rho},\qquad v\in V,\ g\in\Sp_{2n}(\R).
		\end{aligned} \]
		Applying the inverse of the unitary $ \Sp_{2n}(\R) $-equivalence $ E_\rho $, we conclude that there exists a function $ h:V\to\calH^2(\rho) $ such that
		\begin{equation}\label{eq:045}
			 v^*\check F_f=c_{f,h(v)},\qquad v\in V. 
		\end{equation}
	
		One shows easily that $ h $ is a $ K $-equivariant linear operator from $ (\sigma_K,V) $ to $ \calH^2(\rho)_{[\sigma_K]} $. For example, to prove the $ K $-equivariance, we note that for all $ k\in K $ and $ v\in V $, we have
		\[ \begin{aligned}
			&\scal{\pi_\rho(g)f}{h(\sigma_K(k)v)}_{\calH^2(\rho)}
			\overset{\eqref{eq:045}}=\scal{\check F_f(g)}{\sigma_K(k)v}
			=\scal{\sigma_K(k)^{-1}F_f\big(g^{-1}\big)}{v}\\
			&\qquad=\scal{F_f\big(g^{-1}k\big)}{v}
			=\scal{\check F_f\big(k^{-1}g\big)}{v}
			=\left(L(k)v^*\check F_f\right)(g)
			\overset{\eqref{eq:045}}=\left(L(k)c_{f,h(v)}\right)(g)\\
			&\qquad=\scal{\pi_\rho\big(k^{-1}g\big)f}{h(v)}_{\calH^2(\rho)}
			=\scal{\pi_\rho(g)f}{\pi_\rho(k)h(v)}_{\calH^2(\rho)},\quad g\in\Sp_{2n}(\R),
		\end{aligned} \]
		where the third equality holds by Lem.\,\ref{lem:021}\ref{lem:021:3}. Since by the irreducibility of $ \pi_\rho $, the linear span of $ \pi_\rho(\Sp_{2n}(\R))f $ is dense in $ \calH^2(\rho) $, it follows that 
		\[h(\sigma_K(k)v)=\pi_\rho(k)h(v),\qquad k\in K,\ v\in V, \]
		i.e., $ h $ is $ K $-equivariant. Thus, by Lem.\,\ref{lem:116}\ref{lem:116:1} and Schur's lemma \cite[Prop.\,1.5]{knapp86}, it follows that $ h $ is, up to a scalar multiple, given by the assignment $ v\mapsto f_{1,v} $. 
		Thus, \eqref{eq:045} implies that there exists a function $ \lambda:\calH^2(\rho)_K\to\C $ such that
		\begin{equation}\label{eq:046}
			c_{f,f_{1,v}}=\lambda(f)\,v^*\check F_f,\qquad f\in\calH^2(\rho)_K,\ v\in V. 
		\end{equation}
		Let us fix a unit vector $ v^{top} $ of highest weight for $ \rho $. By Lem.\,\ref{lem:029}\ref{lem:029:1}, the assignment
		\[ f\mapsto c_{f,f_{1,v^{top}}}\overset{\eqref{eq:046}}=\left(v^{top}\right)^*\check F_{\lambda(f)f}\mapsto\lambda(f)f \]
		defines a linear operator on $ \calH^2(\rho)_K $, hence $ \lambda $ is constant on $ \calH^2(\rho)_K\setminus\left\{0\right\} $; more precisely,  for all $ f\in\calH^2(\rho)_K\setminus\left\{0\right\} $, we have
		\begin{equation}\label{eq:050}
			 \lambda(f)=\left(\frac{c_{f_{1,v^{top}},{f_{1,v^{top}}}}}{\left(v^{top}\right)^*\check F_{f_{1,v^{top}}}}\right)(1_{\Sp_{2n}(\R)})
		=\frac{\norm{f_{1,v^{top}}}_{\calH^2(\rho)}^2}{\norm{v^{top}}^2}=\frac{C_\rho}{1^2}=C_\rho, 
		\end{equation}
		where the first equality is obtained by letting $ f=f_{1,v^{top}} $ and $ v=v^{top} $ in \eqref{eq:046} and evaluating at $ 1_{\Sp_{2n}(\R)} $, and the second equality holds by \eqref{eq:039}, \eqref{eq:047}, and \eqref{eq:048}. The equalities \eqref{eq:046} and \eqref{eq:050} imply the claim \ref{prop:049:1}, which immediately implies \ref{prop:049:2}.
	\end{proof}
	
	By Cartan's KAK decomposition \cite[Thm.\,5.20]{knapp86}, given $ g\in\Sp_{2n}(\R) $, there exist matrices $ k,k'\in K $ and $ t\in\R^n $ such that
	\begin{equation}\label{eq:065}
		g=kh_tk',
	\end{equation}
	where $ h_t=\mathrm{diag}\left(e^{t_1},\ldots,e^{t_n},e^{-t_1},\ldots,e^{-t_n}\right)\in\mathrm{Diag}_{2n}(\R_{>0}) $. By \cite[Prop.\,5.28]{knapp86}, there exists $ M_0\in\R_{>0} $ such that for every $ f\in C_c(\Sp_{2n}(\R)) $, the Haar integral $ \int_{\mathrm{Sp_{2n}(\R)}}f(g)\,dg $ equals
	\begin{equation}\label{eq:067}
		M_0\int_{K}\int_{A^+}\int_Kf(kh_tk')\left(\prod_{1\leq r<s\leq n}\sinh(t_r-t_s)\right)\left(\prod_{1\leq r\leq s\leq n}\sinh(t_r+t_s)\right)\,dk\,dt\,dk',
	\end{equation}
	where $ A^+=\left\{t=(t_1,\ldots,t_n)\in\R^n:t_1>\ldots>t_n>0\right\} $.
	Moreover, a simple expression for matrix coefficients of Prop.\,\ref{prop:049} is implied by the following lemma.
	
	\begin{lemma}
		Suppose that $ \omega_n>n $.
		Let $ \mu\in\C[X_{r,s}:1\leq r,s\leq n] $ and $ v\in V $. Denoting $ d_t=\mathrm{diag}(t_1,\ldots,t_n) $ for $ t\in\R^n $, we have
		\begin{equation}\label{eq:066}
			F_{f_{\mu,v}}(k_uh_tk_{u'})=\mu\left(u\tanh(d_t)u^\top\right)\rho\left(\left(u'\right)^{\top}\cosh(d_t)^{-1}u^{\top}\right)v 
		\end{equation}
		for all $ u,u'\in\mathrm{U}(n) $ and $ t\in\R^n $.
	\end{lemma}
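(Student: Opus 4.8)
The plan is to push everything through the intertwiner $\ell$, where the three factors $k_u$, $h_t$, $k_{u'}$ become block-diagonal or symmetric-block matrices and the factor of automorphy becomes transparent. Write $j(M,z)=Cz+D$ for $M=\begin{pmatrix}A&B\\C&D\end{pmatrix}$, so that the slash action reads $\bigl(p\big|_\rho M\bigr)(z)=\rho(j(M,z))^{-1}p(M.z)$ and $j$ satisfies the standard cocycle identity $j(M_1M_2,z)=j(M_1,M_2.z)\,j(M_2,z)$. Since $f_{\mu,v}=p_{\mu,v}\big|_\rho\ell$ by \eqref{eq:048}, the definition \eqref{eq:047} of $F_{f_{\mu,v}}$ gives
\[ F_{f_{\mu,v}}(g)=\bigl(p_{\mu,v}\big|_\rho\ell g\bigr)(iI_n)=\mu\bigl((\ell g).(iI_n)\bigr)\,\rho\bigl(j(\ell g,iI_n)\bigr)^{-1}v, \]
where I have used $p_{\mu,v}(w)=\mu(w)v$ with $\mu(w)$ scalar. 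It therefore suffices to evaluate the point $(\ell g).(iI_n)$ and the factor $j(\ell g,iI_n)$ for $g=k_uh_tk_{u'}$.

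First I would compute the conjugates in $\GL_{2n}(\C)$ using $\ell^{-1}=\begin{pmatrix}iI_n&iI_n\\-I_n&I_n\end{pmatrix}$. Writing $u=P+iQ$, a direct block multiplication yields $\ell k_u\ell^{-1}=\begin{pmatrix}u&0\\0&\overline u\end{pmatrix}$, $\ell h_t\ell^{-1}=\begin{pmatrix}\cosh(d_t)&\sinh(d_t)\\\sinh(d_t)&\cosh(d_t)\end{pmatrix}$, and likewise for $u'$. Factoring $\ell g=(\ell k_u\ell^{-1})(\ell h_t\ell^{-1})(\ell k_{u'}\ell^{-1})\,\ell$ and tracking the orbit of $iI_n$ through these four maps — using $\ell.(iI_n)=0$, then the disc actions $w\mapsto u'w(u')^\top$, the fractional-linear map attached to $\ell h_t\ell^{-1}$, and $w\mapsto uwu^\top$ — gives the chain $iI_n\mapsto 0\mapsto 0\mapsto\tanh(d_t)\mapsto u\tanh(d_t)u^\top$. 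Hence $(\ell g).(iI_n)=u\tanh(d_t)u^\top$, which is exactly the argument of $\mu$ in \eqref{eq:066}; here I use unitarity in the form $\overline u^{-1}=u^\top$ to rewrite $uw\,\overline u^{-1}$ as $uwu^\top$.

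For the automorphy factor I would expand $j(\ell g,iI_n)$ by the cocycle identity along the same four-factor product, evaluating $j$ of each factor at the successive orbit points. The bottom blocks of the diagonal-type factors are $\overline u$ and $\overline{u'}$, the bottom block of $\ell h_t\ell^{-1}$ at $w=0$ is $\cosh(d_t)$, and crucially $j(\ell,iI_n)=\tfrac1{2i}(iI_n)+\tfrac12 I_n=I_n$; multiplying in order gives $j(\ell g,iI_n)=\overline u\,\cosh(d_t)\,\overline{u'}$. Since $\rho$ is a homomorphism and $\overline u^{-1}=u^\top$, $\overline{u'}^{-1}=(u')^\top$ by unitarity, this gives $\rho\bigl(j(\ell g,iI_n)\bigr)^{-1}=\rho\bigl((u')^\top\cosh(d_t)^{-1}u^\top\bigr)$, which together with the previous paragraph yields \eqref{eq:066}. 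The main obstacle is purely organizational: one must keep the non-commuting matrices $P,Q$ and the diagonal $e^{\pm d_t}$ in the correct order when checking the three block identities, and expand the cocycle against the right intermediate points. Routing the whole computation through $\ell$, so that $j(\ell,iI_n)=I_n$ and each conjugated factor is block-diagonal or block-symmetric, is precisely what makes the telescoping automatic and lets me avoid computing $g.(iI_n)$ and its automorphy factor directly.
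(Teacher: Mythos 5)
Your proof is correct, and it takes essentially the approach the paper relies on: the paper's own ``proof'' is a one-line citation of the scalar-valued computation in \cite[proof of Lem.\,6.2]{zunar23} together with the assertion that it generalizes elementarily, and your conjugation-through-$\ell$ argument is exactly that computation carried out in full. All the key steps check out --- $\ell k_u\ell^{-1}=\begin{pmatrix}u&0\\0&\overline u\end{pmatrix}$, $\ell h_t\ell^{-1}=\begin{pmatrix}\cosh(d_t)&\sinh(d_t)\\\sinh(d_t)&\cosh(d_t)\end{pmatrix}$, the orbit chain $iI_n\mapsto0\mapsto0\mapsto\tanh(d_t)\mapsto u\tanh(d_t)u^\top$, the cocycle expansion with $j(\ell,iI_n)=I_n$ giving $j(\ell g,iI_n)=\overline u\cosh(d_t)\overline{u'}$, and the unitarity step $\overline u^{-1}=u^\top$ --- so the result matches \eqref{eq:066} exactly.
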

	
	\begin{proof}
		A proof in the scalar-valued case (i.e., the case when $ \dim_\C V=1 $) is given in \cite[proof of Lem.\,6.2]{zunar23} and generalizes in an elementary way to the vector-valued case.
	\end{proof}

	For every function $ f:\calH_n\to V $, we define a Poincar\'e series
	\[ P_{\Gamma,\rho}f=\sum_{\gamma\in\Gamma}f\big|_\rho\gamma. \]
	The following vector-valued version of \cite[Lem.\,5.4]{zunar23} is elementary.
 	
 	\begin{lemma}\label{lem:055}
 		Let $ f:\calH_n\to V $. Then, the series $ P_{\Gamma,\rho}f $ converges absolutely and uniformly on compact subsets of $ \calH_n $ if and only if the series $ P_\Gamma F_f $ converges absolutely and uniformly on compact subsets of $ \Sp_{2n}(\R) $. Furthermore, assuming convergence, the following holds:
 		\begin{enumerate}[label=\textup{(\roman*)}]
 			\item We have
 			\begin{equation}\label{eq:056}
 				 F_{P_{\Gamma,\rho}f}=P_\Gamma F_f. 
 			\end{equation}
 			\item The function $ P_{\Gamma,\rho}f $ vanishes identically if and only if $ P_\Gamma F_f $ does.
 		\end{enumerate}
 	\end{lemma}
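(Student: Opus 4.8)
The plan is to deduce all three assertions from the single cocycle identity
\[ F_f(\gamma g)=\Big(f\big|_\rho(\gamma g)\Big)(iI_n)=\Big(\big(f\big|_\rho\gamma\big)\big|_\rho g\Big)(iI_n)=F_{f|_\rho\gamma}(g),\qquad\gamma\in\Gamma,\ g\in\Sp_{2n}(\R), \]
which follows at once from the definition $F_h(g)=\big(h\big|_\rho g\big)(iI_n)$ in \eqref{eq:047} and the fact that $\big|_\rho$ is a right action, so that $\big(f\big|_\rho\gamma\big)\big|_\rho g=f\big|_\rho(\gamma g)$. Summing over $\gamma$, this identity shows that $P_\Gamma F_f=\sum_\gamma F_{f|_\rho\gamma}$, so the two Poincar\'e series are termwise images of one another under the linear lift $h\mapsto F_h$.

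For the equivalence of convergence, the key input is Lem.\,\ref{lem:021}\ref{lem:021:4}, which gives, for $g=n_xa_yk$,
\[ \norm{F_f(\gamma g)}=\norm{F_{f|_\rho\gamma}(g)}=\norm{\rho\Big(y^{\frac12}\Big)\big(f\big|_\rho\gamma\big)(x+iy)}. \]
Since $\rho\big(y^{\frac12}\big)$ is invertible, this quantity and $\norm{\big(f\big|_\rho\gamma\big)(x+iy)}$ differ by factors bounded by $\norm{\rho\big(y^{\pm\frac12}\big)}_{\mathrm{op}}$. First I would note that the continuous section $x+iy\mapsto n_xa_y$ and the projection $g\mapsto g.(iI_n)$ set up, because $K$ is compact, a correspondence between compact subsets of $\calH_n$ and of $\Sp_{2n}(\R)$ over which $y$ ranges in a fixed compact set of positive definite matrices; there $\norm{\rho\big(y^{\pm\frac12}\big)}_{\mathrm{op}}$ is bounded by continuity. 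On such corresponding compacta the two series $\sum_\gamma\norm{F_f(\gamma g)}$ and $\sum_\gamma\norm{\big(f\big|_\rho\gamma\big)(x+iy)}$ are therefore uniformly comparable (the factor $k$ having dropped out of the norm), which yields the asserted equivalence of absolute, locally uniform convergence in both directions.

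Assuming convergence, part (i) follows by interchanging the locally uniformly convergent sum with the continuous operations defining the lift: applying $\big|_\rho g$ (precomposition by $g.$ followed by the fixed operator $\rho(Cz+D)^{-1}$) and then evaluating at $iI_n$ commute with the summation, so that $F_{P_{\Gamma,\rho}f}(g)=\sum_\gamma\big(f\big|_\rho(\gamma g)\big)(iI_n)=\sum_\gamma F_f(\gamma g)=(P_\Gamma F_f)(g)$. Part (ii) is then immediate from \eqref{eq:056} and the injectivity of $h\mapsto F_h$: by Lem.\,\ref{lem:021}\ref{lem:021:4}, $F_h\equiv0$ forces $\rho\big(y^{\frac12}\big)h(x+iy)=0$ and hence $h\equiv0$, and applying this to $h=P_{\Gamma,\rho}f$ gives $P_{\Gamma,\rho}f\equiv0\iff P_\Gamma F_f\equiv0$. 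I expect no genuine obstacle here, as the paper itself records the lemma as elementary; the one point needing care is making the comparison of the two norm-series uniform on corresponding compacta, i.e.\ checking that compact sets match up under the $\Sp_{2n}(\R)\leftrightarrow\calH_n$ dictionary and that $\norm{\rho\big(y^{\pm\frac12}\big)}_{\mathrm{op}}$ stays bounded there, everything else being bookkeeping with the right action $\big|_\rho$ and the definition of the lift.
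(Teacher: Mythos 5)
Your proof is correct, and it is exactly the elementary argument the paper has in mind: the paper gives no proof of Lem.\,\ref{lem:055}, merely noting that it is the elementary vector-valued version of \cite[Lem.\,5.4]{zunar23}. Your combination of the cocycle identity $F_f(\gamma g)=F_{f|_\rho\gamma}(g)$, the norm identity of Lem.\,\ref{lem:021}\ref{lem:021:4}, and the boundedness of $\norm{\rho\big(y^{\pm\frac12}\big)}_{\mathrm{op}}$ on corresponding compacta supplies precisely the omitted details.
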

		
	By \eqref{eq:061}, the following theorem is a restatement of Thm.\,\ref{thm:100}.
		
	\begin{theorem}\label{thm:062}
		Suppose that $ \omega_n>2n $. Then, for every $ f\in\calH^2(\rho)_K $, the Poincar\'e series $ P_{\Gamma,\rho}f $ converges absolutely and uniformly on compact subsets of $ \calH_n $. Moreover, we have
		\begin{equation}\label{eq:059}
			S_\rho(\Gamma)=\left\{P_{\Gamma,\rho}f:f\in\calH^2(\rho)_K\right\}.
		\end{equation}
	\end{theorem}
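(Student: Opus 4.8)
The plan is to pass, via the classical lift $f\mapsto F_f$, from the $V$-valued Poincar\'e series $P_{\Gamma,\rho}f$ to the scalar Poincar\'e series of the $K$-finite matrix coefficients $c_{f_{1,v}^*,f^*}$ of the integrable representation $\pi_\rho^*$, and then to read off the statement from the spanning result of Cor.\,\ref{cor:106}. For the convergence claim, fix $f\in\calH^2(\rho)_K$. Since $\omega_n>2n$, the representation $\pi_\rho^*$ is integrable by Lem.\,\ref{lem:041}, so Lem.\,\ref{lem:040}\ref{lem:040:1} guarantees that each scalar Poincar\'e series $P_\Gamma c_{f_{1,v}^*,f^*}$ converges absolutely and uniformly on compact subsets of $\Sp_{2n}(\R)$. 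Choosing an orthonormal basis $(v_j)_j$ of $V$ and rewriting Prop.\,\ref{prop:049}\ref{prop:049:2} as $\scal{F_f(\spacedcdot)}{v_j}=C_\rho^{-1}c_{f_{1,v_j}^*,f^*}$, I would estimate $\norm{F_f(g)}\leq\sum_j\abs{\scal{F_f(g)}{v_j}}=C_\rho^{-1}\sum_j\abs{c_{f_{1,v_j}^*,f^*}(g)}$ and sum over $\gamma\in\Gamma$; since the sum over $j$ is finite, $P_\Gamma F_f$ converges absolutely and locally uniformly, and Lem.\,\ref{lem:055} transfers this to $P_{\Gamma,\rho}f$.

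For the spanning claim, the plan is to compose the unitary isomorphism $\Phi_{\rho,\Gamma}$ of Lem.\,\ref{lem:058} with the isomorphism of Cor.\,\ref{cor:051} and the identification of Lem.\,\ref{lem:111}, so that $h\mapsto\left(v^{top}\right)^*F_h$ is a linear isomorphism of $S_\rho(\Gamma)$ onto $\bigl(\left(L^2_{cusp}(\Gamma\backslash\Sp_{2n}(\R))_{[\pi_\rho^*]}\right)_{[\rho_K]}\bigr)^{\mathfrak k_\C^+}$, which by Cor.\,\ref{cor:106} equals $\left\{P_\Gamma c_{f_{1,v^{top}}^*,f^*}:f\in\calH^2(\rho)_K\right\}$. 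The central computation is that for every $f\in\calH^2(\rho)_K$, combining $F_{P_{\Gamma,\rho}f}=P_\Gamma F_f$ (Lem.\,\ref{lem:055}) with Prop.\,\ref{prop:049}\ref{prop:049:2} gives
\[ \left(v^{top}\right)^*F_{P_{\Gamma,\rho}f}=P_\Gamma\bigl(\left(v^{top}\right)^*F_f\bigr)=C_\rho^{-1}P_\Gamma c_{f_{1,v^{top}}^*,f^*}. \]

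From here I would establish both inclusions by the same device. For $\left\{P_{\Gamma,\rho}f\right\}\subseteq S_\rho(\Gamma)$: the displayed identity places $\left(v^{top}\right)^*F_{P_{\Gamma,\rho}f}$ in the target of the isomorphism, so Cor.\,\ref{cor:051} produces a unique $\psi\in\mathcal{A}_{cusp}(\Gamma\backslash\Sp_{2n}(\R),\rho,V)^{\mathfrak p_\C^-}$ with $\left(v^{top}\right)^*\psi=\left(v^{top}\right)^*F_{P_{\Gamma,\rho}f}$. Both $\psi$ and $F_{P_{\Gamma,\rho}f}$ satisfy \ref{enum:022:3} (the latter by Lem.\,\ref{lem:021}\ref{lem:021:3}), so applying Lem.\,\ref{lem:029}\ref{lem:029:1} to their difference forces $F_{P_{\Gamma,\rho}f}=\psi$, whence $P_{\Gamma,\rho}f\in S_\rho(\Gamma)$ because the lift is injective and restricts to the isomorphism $\Phi_{\rho,\Gamma}$. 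Conversely, given $h\in S_\rho(\Gamma)$, Cor.\,\ref{cor:106} yields an $f$ with $\left(v^{top}\right)^*F_h=P_\Gamma c_{f_{1,v^{top}}^*,f^*}=\left(v^{top}\right)^*F_{P_{\Gamma,\rho}(C_\rho f)}$ by the displayed identity, and the same application of Lem.\,\ref{lem:029}\ref{lem:029:1} gives $h=P_{\Gamma,\rho}(C_\rho f)$ with $C_\rho f\in\calH^2(\rho)_K$.

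I expect the main obstacle to be exactly this last passage: recovering an equality of $V$-valued functions from an equality of their highest-weight components $\left(v^{top}\right)^*F_\bullet$. This is the point at which a general $\rho$ genuinely differs from a character, since for $\dim_\C V>1$ the highest-weight line carries only part of the data; the argument succeeds only because both functions transform under $K$ via $\sigma_K$ (property \ref{enum:022:3}), so that agreement on the $v^{top}$-line propagates across all of $V$ through the irreducibility packaged into Lem.\,\ref{lem:029}. Carrying the constant $C_\rho$ consistently and keeping the $K$-covariance aligned throughout the chain of isomorphisms is the part most prone to slips.
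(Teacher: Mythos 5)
Your proof is correct and follows essentially the same route as the paper's: the same convergence argument via Lem.\,\ref{lem:041}, Lem.\,\ref{lem:040}\ref{lem:040:1}, and Prop.\,\ref{prop:049}\ref{prop:049:2}, and the same chain of identifications (Lem.\,\ref{lem:058}, Cor.\,\ref{cor:051}, Lem.\,\ref{lem:111}, Cor.\,\ref{cor:106}) combined with Prop.\,\ref{prop:049}\ref{prop:049:2}, Lem.\,\ref{lem:055}, property \ref{enum:022:3}, and Lem.\,\ref{lem:029}\ref{lem:029:1} to upgrade equality of highest-weight components $\left(v^{top}\right)^*F_\bullet$ to equality of the $V$-valued lifts. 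The only difference is presentational: the paper establishes the single chain of set equalities \eqref{eq:057} and then applies $\Phi_{\rho,\Gamma}^{-1}$, whereas you verify the two inclusions element by element and absorb the constant $C_\rho$ into $f$ by linearity, which is the same argument.
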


	\begin{proof}
		Let $ f\in\calH^2(\rho)_K $. It follows from Lem.\,\ref{lem:041}, Lem.\,\ref{lem:040}\ref{lem:040:1}, and Prop.\,\ref{prop:049}\ref{prop:049:2} that the Poincar\'e series $ P_\Gamma\,v^*F_f $ converges absolutely and uniformly on compact subsets of $ \Sp_{2n}(\R) $ for every $ v\in V $, hence so does the Poincar\'e series $ P_\Gamma F_f $. Consequently, by Lem.\,\ref{lem:055}, the Poincar\'e series $ P_{\Gamma,\rho}f $ converges absolutely and uniformly on compact subsets of $ \calH_n $.
		
		Let $ v^{top} $ be a highest weight vector for $ \rho $. By Cor.\,\ref{cor:051}, we have
		 \begin{equation}\label{eq:057}
		 	\begin{aligned}
			\left(v^{top}\right)^*\left(\mathcal A_{cusp}(\Gamma\backslash\Sp_{2n}(\R),\rho,V)^{\mathfrak p_\C^-}\right)
			&\overset{\phantom{\eqref{eq:054}}}=\left(\mathcal A_{cusp}(\Gamma\backslash\Sp_{2n}(\R))_{[\rho_K]}^{\mathfrak p_\C^-}\right)^{\mathfrak k_\C^+}\\
			&\overset{\eqref{eq:038}}=\left(\left(L^2_{cusp}(\Gamma\backslash\Sp_{2n}(\R))_{[\pi_\rho^*]}\right)_{[\rho_K]}\right)^{\mathfrak k_\C^+}\\
			&\overset{\eqref{eq:052}}=\left\{P_\Gamma c_{f_{1,v^{top}}^*,f^*}:f\in\calH^2(\rho)_K\right\}\\
			&\overset{\eqref{eq:054}}=\left\{P_\Gamma \left(v^{top}\right)^*F_f:f\in\calH^2(\rho)_K\right\}\\
			&\overset{\phantom{\eqref{eq:054}}}=\left(v^{top}\right)^*\left\{P_\Gamma F_f:f\in\calH^2(\rho)_K\right\}.
		\end{aligned} 
		 \end{equation}
		Since the elements of both $ \mathcal A_{cusp}(\Gamma\backslash\Sp_{2n}(\R),\rho,V)^{\mathfrak p_\C^-} $  and $ \left\{P_\Gamma F_f:f\in\calH^2(\rho)_K\right\} $ satisfy \ref{enum:022:3} (see Lem.\,\ref{lem:021}\ref{lem:021:3}), by Lem.\,\ref{lem:029}\ref{lem:029:1} the equality \eqref{eq:057} implies that
		\[ \mathcal A_{cusp}(\Gamma\backslash\Sp_{2n}(\R),\rho,V)^{\mathfrak p_\C^-}=\left\{P_\Gamma F_f:f\in\calH^2(\rho)_K\right\}. \]
		By applying the inverse of the unitary isomorphism $ \Phi_{\rho,\Gamma} $ of Lem.\,\ref{lem:058}, by \eqref{eq:056} we obtain \eqref{eq:059}.
	\end{proof}

	\section{Relation to the reproducing kernel function for $ S_\rho(\Gamma) $}\label{sec:007}
	
	Suppose that $ \omega_n>2n $. The reproducing kernel function for $ S_\rho(\Gamma) $ is the unique function 
	\[ K_\rho^\Gamma:\calH_n\times\calH_n\to\mathrm{End}_\C(V) \] 
	such that for all $ z\in\calH_n $ and $ v\in V $, the following holds:
	\begin{enumerate}
		\item The function $ K_\rho^\Gamma(\spacedcdot,z)v:\calH_n\to V $ belongs to $ S_\rho(\Gamma) $.
		\item $ \scal{f(z)}v=\scal{f}{K_\rho^\Gamma(\spacedcdot,z)v}_{S_\rho(\Gamma)} $ for all $ f\in S_\rho(\Gamma) $.
	\end{enumerate}
	By \cite[p.\ 10-30]{godement57_10}, there exists a constant $ c_\rho\in\C^\times $, depending only on $ \rho $, such that
	\[ K_\rho^\Gamma(\spacedcdot,z)v=c_\rho P_{\Gamma,\rho}K_{\rho,z,v},\qquad z\in\calH_n,\ v\in V, \]
	where $ K_{\rho,z,v} $ is the function $ \calH_n\to V $ defined by
	\[ K_{\rho,z,v}(\zeta)=\rho\left(\frac1{2i}\left(\zeta-\overline z\right)\right)^{-1}v,\qquad\zeta\in\calH_n. \] 
	Since by \eqref{eq:048}, $ K_{\rho,iI_n,v}=f_{1,v} $ for every $ v\in V $, it follows that
	\[ K_\rho^\Gamma(\spacedcdot,iI_n)v=c_\rho P_{\Gamma,\rho}f_{1,v},\qquad v\in V. \]
	In other words, Lem.\,\ref{lem:107} holds.

	\section{A non-vanishing result}\label{sec:008}
	
	In this section, we prove a non-vanishing result for Siegel cusp forms $ P_{\Gamma,\rho}f_{\mu,v} $ of Thm.\,\ref{thm:100} using the following variant of Mui\'c's integral non-vanishing criterion \cite[Thm.\,4.1]{muic09}.
	
	\begin{proposition}\label{prop:063}
		Let $ \Gamma $ be a discrete subgroup of $ \Sp_{2n}(\R) $. Let $ \varphi\in L^1(\Sp_{2n}(\R),U) $, where $ U $ is a finite-dimensional complex Hilbert space. Assume that there exists a Borel measurable subset $ C\subseteq\Sp_{2n}(\R) $ with the following properties:
			\begin{enumerate}[label=\textup{(C\arabic*)}]
				\item\label{enum:063:1} $ CC^{-1}\cap\Gamma=\left\{1_{\mathrm{Sp}_{2n}(\R)}\right\} $.
				\item\label{enum:063:2} $ \int_C\norm{\varphi(g)}_U\,dg>\frac12\int_{\Sp_{2n}(\R)}\norm{\varphi(g)}_U\,dg $.
			\end{enumerate}
		Then, the Poincar\'e series $ P_\Gamma\varphi $ defined by \eqref{eq:118} converges absolutely almost everywhere on $ \Sp_{2n}(\R) $, and $ P_\Gamma\varphi\in L^1(\Gamma\backslash\Sp_{2n}(\R),U)\setminus\left\{0\right\} $.
	\end{proposition}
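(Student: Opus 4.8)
The plan is to follow the scalar argument of \cite[Thm.\,4.1]{muic09} essentially verbatim, replacing the absolute value by the Hilbert norm $\norm{\spacedcdot}_U$ throughout; the finite-dimensionality of $U$ plays no essential role beyond guaranteeing that $L^1(\Sp_{2n}(\R),U)$ and $L^1(\Gamma\backslash\Sp_{2n}(\R),U)$ are well-behaved. The absolute almost-everywhere convergence of $P_\Gamma\varphi$ and its membership in $L^1(\Gamma\backslash\Sp_{2n}(\R),U)$ have already been recorded at the beginning of Sec.\,\ref{sec:105}, so the only substantive point is the non-vanishing $P_\Gamma\varphi\not\equiv0$.

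First I would observe that condition \ref{enum:063:1} forces the translates $\left\{\gamma C:\gamma\in\Gamma\right\}$ to be pairwise disjoint: if $\gamma_1 c_1=\gamma_2 c_2$ with $c_1,c_2\in C$ and $\gamma_1\neq\gamma_2$, then $\gamma_2^{-1}\gamma_1=c_2 c_1^{-1}\in\left(CC^{-1}\cap\Gamma\right)\setminus\left\{1_{\Sp_{2n}(\R)}\right\}$, contradicting \ref{enum:063:1}. In particular $C$ is disjoint from $\gamma C$ for every $\gamma\neq1_{\Sp_{2n}(\R)}$, so $\bigcup_{\gamma\neq1_{\Sp_{2n}(\R)}}\gamma C\subseteq\Sp_{2n}(\R)\setminus C$.

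Next I would estimate $\norm{P_\Gamma\varphi}_U$ on $C$ from below. For almost every $g\in C$, isolating the term $\gamma=1_{\Sp_{2n}(\R)}$ and applying the reverse triangle inequality gives
\[ \norm{(P_\Gamma\varphi)(g)}_U\geq\norm{\varphi(g)}_U-\sum_{\gamma\in\Gamma\setminus\left\{1_{\Sp_{2n}(\R)}\right\}}\norm{\varphi(\gamma g)}_U. \]
Integrating over $C$, interchanging sum and integral (Tonelli, the integrand being nonnegative), and using the left-invariance of the unimodular Haar measure to write $\int_C\norm{\varphi(\gamma g)}_U\,dg=\int_{\gamma C}\norm{\varphi(g)}_U\,dg$, the disjointness above yields
\[ \int_C\norm{(P_\Gamma\varphi)(g)}_U\,dg\geq\int_C\norm{\varphi(g)}_U\,dg-\int_{\Sp_{2n}(\R)\setminus C}\norm{\varphi(g)}_U\,dg. \]
Condition \ref{enum:063:2} makes the right-hand side strictly positive, since $\int_{\Sp_{2n}(\R)\setminus C}\norm\varphi_U=\int_{\Sp_{2n}(\R)}\norm\varphi_U-\int_C\norm\varphi_U<\int_C\norm\varphi_U$. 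As $P_\Gamma\varphi$ is left $\Gamma$-invariant, were it the zero element of $L^1(\Gamma\backslash\Sp_{2n}(\R),U)$ it would vanish almost everywhere on $\Sp_{2n}(\R)$, making the left-hand side vanish; the strict positivity therefore forces $P_\Gamma\varphi\neq0$.

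I do not expect a genuine obstacle here, as the argument is a direct transcription of the scalar case. The only points requiring care are the passage of $\norm{\spacedcdot}_U$ through the reverse triangle inequality (valid in any normed space), the interchange of summation and integration (licensed by the already-established $L^1$-convergence together with Tonelli's theorem for the nonnegative integrand $\norm{\varphi(\gamma g)}_U$), and the unimodularity of $\Sp_{2n}(\R)$, which justifies the substitution $g\mapsto\gamma g$.
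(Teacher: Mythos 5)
Your proof is correct and is essentially the argument the paper invokes: the paper's own proof simply says to take the convergence argument from \cite[beginning of \S4]{muic09} and the non-vanishing argument from \cite[proof of Thm.\,5.3]{zunar19}, replacing $\abs\spacedcdot$ by $\norm\spacedcdot_U$, and what you have written out (disjointness of the translates $\gamma C$ from \ref{enum:063:1}, the reverse triangle inequality, Tonelli plus left-invariance of the Haar measure, and the strict positivity of $\int_C\norm\varphi_U-\int_{\Sp_{2n}(\R)\setminus C}\norm\varphi_U$ from \ref{enum:063:2}) is precisely that argument spelled out in full.
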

	
	\begin{proof}
		A proof of convergence (resp., non-vanishing) is obtained by replacing every occurence of the complex absolute value symbol $ \abs\spacedcdot $ in \cite[beginning of \S4]{muic09} (resp., \cite[proof of Thm.\,5.3]{zunar19}) by $ \norm\spacedcdot_U $.
	\end{proof}
	
	Following \cite{zunar23}, we define a function $ M:\Z_{>0}\to\left[0,1\right] $,
	\[ M(N)=\left(\sqrt{1+\frac{4n}{N^2}}+\sqrt{\frac{4n}{N^2}}\right)^{-2}, \]
	and, for every $ t\in\R_{>0} $, let
	\[ A_t^+=\left\{x=(x_1,\ldots,x_n)\in\mathbb R^n:t>x_1>\ldots>x_n>0\right\}. \]
	We have the following generalization of \cite[Prop.\,6.5]{zunar23} from the scalar-valued to the vector-valued case.

	\begin{theorem}\label{thm:101}
		Suppose that $ \omega_n>2n $. Let $ \mu\in\C[X_{r,s}:1\leq r,s\leq n]\setminus\left\{0\right\} $ and $ v\in V\setminus\left\{0\right\} $. Let $ \varphi_{\rho,\mu,v}:\mathrm U(n)\times A_1^+\to\R_{\geq0} $,
		\[ \varphi_{\rho,\mu,v}(u,x)=\abs{\mu\left(ud_x^{\frac12}u^\top\right)}\,\norm{\rho\left((I_n-d_x)^{\frac12}u^{\top}\right)v}\,\frac{\prod_{1\leq r<s\leq n }(x_r-x_s)}{\prod_{r=1}^n(1-x_r)^{n+1}}, \]
		where $ d_x=\mathrm{diag}(x_1,\ldots,x_n) $.
		Let $ N_0=N_0(\rho,\mu,v) $ be the smallest integer $ N\in\Z_{\geq3} $ such that
		\begin{equation}\label{eq:064}
			\int_{A_{M(N)}^+}\int_{\mathrm U(n)}\varphi_{\rho,\mu,v}(u,x)\,du\,dx>\frac12\int_{A_1^+}\int_{\mathrm U(n)}\varphi_{\rho,\mu,v}(u,x)\,du\,dx, 
		\end{equation}
		where $ du $ is the Haar measure on $ \mathrm U(n) $ such that $ \int_{\mathrm U(n)}du=1 $. Then, for every $ N\in\Z_{\geq N_0} $ and for every subgroup $ \Gamma $ of finite index in $ \Gamma_n(N) $, we have that
		\[ P_\Gamma F_{f_{\mu,v}}\not\equiv0\qquad\text{and}\qquad P_{\Gamma,\rho}f_{\mu,v}\not\equiv0. \]
	\end{theorem}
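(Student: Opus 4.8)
The plan is to apply the vector-valued non-vanishing criterion Prop.\,\ref{prop:063} to the function $ \varphi\coloneqq F_{f_{\mu,v}} $, with the role of $ C $ played by a Cartan "ball" whose radius is calibrated by $ M(N) $. I first record that $ \varphi\in L^1(\Sp_{2n}(\R),V) $, so that $ P_\Gamma\varphi $ is defined. Since $ \omega_n>2n $, the representation $ \pi_\rho $ is integrable by Lem.\,\ref{lem:041}, so for every $ w\in V $ the matrix coefficient $ c_{f_{\mu,v},f_{1,w}} $ (note $ f_{\mu,v}\in\calH^2(\rho)_K $ by \eqref{eq:061}) lies in $ L^1(\Sp_{2n}(\R)) $; by Prop.\,\ref{prop:049}\ref{prop:049:1} this coefficient equals $ C_\rho\,w^*\check F_{f_{\mu,v}} $, and the unimodularity of $ \Sp_{2n}(\R) $ then gives $ w^*F_{f_{\mu,v}}\in L^1(\Sp_{2n}(\R)) $ as well. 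Letting $ w $ range over an orthonormal basis of $ V $ and using $ \norm{\varphi(g)}\leq\sum_w\abs{w^*\varphi(g)} $, I conclude $ \varphi\in L^1(\Sp_{2n}(\R),V) $.

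Next I would introduce the set $ C $ exactly as in the scalar-valued case \cite[proof of Prop.\,6.5]{zunar23}: using the Cartan decomposition \eqref{eq:065}, let $ C $ consist of those $ g=k_uh_tk_{u'} $ whose $ A $-part satisfies $ \tanh^2t_1<M(N) $, equivalently whose coordinates $ x_r=\tanh^2t_r $ lie in $ A_{M(N)}^+ $. The crucial observation is that condition \ref{enum:063:1}, namely $ CC^{-1}\cap\Gamma=\left\{1_{\Sp_{2n}(\R)}\right\} $, involves neither $ \rho $, $ \mu $, nor $ v $: it is a purely geometric statement about $ C $ and $ \Gamma $. Since $ \Gamma $ has finite index in $ \Gamma_n(N) $ with $ N\geq N_0\geq3 $, it holds verbatim by the argument of \cite[proof of Prop.\,6.5]{zunar23} (this is precisely where the definition of $ M(N) $ is calibrated). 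Thus only \ref{enum:063:2} remains to be matched with \eqref{eq:064}.

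The heart of the argument is the computation of the two integrals in \ref{enum:063:2} via the Cartan integration formula \eqref{eq:067} and the matrix-coefficient formula \eqref{eq:066}. Since $ (u')^\top\in\U(n) $ and $ \rho\big|_{\U(n)} $ is unitary, the factor $ \rho\big((u')^\top\big) $ in \eqref{eq:066} is a unitary operator, so $ \norm{\varphi(k_uh_tk_{u'})}=\abs{\mu\big(u\tanh(d_t)u^\top\big)}\,\norm{\rho\big(\cosh(d_t)^{-1}u^\top\big)v} $ is independent of $ u' $, and integrating out $ k_{u'} $ contributes only the factor $ \int_K dk'=1 $. Under the substitution $ x_r=\tanh^2t_r $, which carries $ A^+ $ onto $ A_1^+ $, one has $ \tanh(d_t)=d_x^{\frac12} $ and $ \cosh(d_t)^{-1}=(I_n-d_x)^{\frac12} $, while a routine Jacobian computation turns the Cartan density into
\[ \Big(\prod_{1\leq r<s\leq n}\sinh(t_r-t_s)\Big)\Big(\prod_{1\leq r\leq s\leq n}\sinh(t_r+t_s)\Big)\,dt=\frac{\prod_{1\leq r<s\leq n}(x_r-x_s)}{\prod_{r=1}^n(1-x_r)^{n+1}}\,dx. \]
Hence $ \norm{\varphi(k_uh_tk_{u'})} $ times the Cartan density equals exactly $ \varphi_{\rho,\mu,v}(u,x) $, yielding
\[ \int_{C}\norm{\varphi(g)}\,dg=M_0\!\int_{A_{M(N)}^+}\!\int_{\U(n)}\!\varphi_{\rho,\mu,v}(u,x)\,du\,dx \]
together with the analogous identity over all of $ \Sp_{2n}(\R) $ with $ A_1^+ $ replacing $ A_{M(N)}^+ $. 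The common constant $ M_0 $ cancels, so \ref{enum:063:2} is equivalent to \eqref{eq:064} and therefore holds for every $ N\geq N_0 $.

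Finally, Prop.\,\ref{prop:063} yields $ P_\Gamma F_{f_{\mu,v}}=P_\Gamma\varphi\in L^1(\Gamma\backslash\Sp_{2n}(\R),V)\setminus\left\{0\right\} $, so $ P_\Gamma F_{f_{\mu,v}}\not\equiv0 $; as this Poincar\'e series converges locally uniformly (Thm.\,\ref{thm:062} and Lem.\,\ref{lem:055}) it is continuous, hence does not vanish identically as a function. The remaining claim $ P_{\Gamma,\rho}f_{\mu,v}\not\equiv0 $ then follows from the equivalence in Lem.\,\ref{lem:055}. The main obstacle is the measure bookkeeping of the third step; conceptually, the only genuinely new ingredient beyond \cite{zunar23} is that the vector-valued integrand splits as $ \abs{\mu(\spacedcdot)}\,\norm{\rho(\spacedcdot)v} $ with the $ u' $-dependence eliminated by the unitarity of $ \rho\big|_{\U(n)} $, which is what allows \ref{enum:063:1} to be imported unchanged from the scalar case.
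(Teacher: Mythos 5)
Your proposal is correct and takes essentially the same route as the paper: Prop.\,\ref{prop:063} applied to $ F_{f_{\mu,v}} $ with $ C=C_{\artanh\sqrt{M(N)}} $, condition \ref{enum:063:1} imported unchanged from the scalar-valued lemmas of \cite{zunar23}, and condition \ref{enum:063:2} identified with \eqref{eq:064} via the Cartan integration formula \eqref{eq:067}, the formula \eqref{eq:066}, the unitarity of $ \rho\big|_{\U(n)} $, and the substitution $ x_r=\tanh^2 t_r $, concluding with Lem.\,\ref{lem:055}. The only minor differences are that you derive $ F_{f_{\mu,v}}\in L^1(\Sp_{2n}(\R),V) $ from Prop.\,\ref{prop:049}\ref{prop:049:1} together with unimodularity rather than directly from Prop.\,\ref{prop:049}\ref{prop:049:2} as the paper does, and that the paper makes explicit what your ``therefore holds for every $ N\geq N_0 $'' uses tacitly, namely that $ M(N)\nearrow1 $ monotonically, which (with the finiteness and positivity of the full integral) both guarantees that $ N_0 $ is well-defined and propagates \eqref{eq:064} from $ N_0 $ to all $ N\geq N_0 $.
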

	
	\begin{proof}
		We will apply Prop.\,\ref{prop:063} to the Poincar\'e series 
		\[ P_\Gamma F_{f_{\mu,v}}=F_{P_{\Gamma,\rho}f_{\mu,v}}, \]
		with the set $ C $ of the form
		\[ C_R=K\left\{h_t:t\in\left[0,R\right[^n\right\}K \]
		(see \eqref{eq:065}) for a suitable $ R\in\R_{>0} $. 
		
		Let $ N\in\Z_{\geq3} $. Since $ \Gamma_n(N)\cap K=\left\{1_{\mathrm{Sp}_{2n}(\R)}\right\} $, the lemmas \cite[Lemmas 6.3 and 6.4]{zunar23} imply that, for every subgroup $ \Gamma $ of $ \Gamma_n(N) $, the set $ C_R $ satisfies the condition \ref{enum:063:1} of Prop.\,\ref{prop:063} if the inequality
		\begin{equation}\label{eq:068}
			\tanh^2R\leq M(N) 
		\end{equation}
		holds. In particular, the set $ C_{\artanh \sqrt{M(N)}} $ satisfies \ref{enum:063:1} for every subgroup $ \Gamma $ of finite index in $ \Gamma_n(N) $.
		
		On the other hand, from the formulas \eqref{eq:067} and \eqref{eq:066}, by a computation analogous to the one in \cite[proof of Prop.\,6.5]{zunar23}, it follows that for every $ R\in\R_{>0} $, we have
		\begin{equation}\label{eq:069}
			\int_{C_R}\norm{F_{f_{\mu,v}}(g)}\,dg=M_0\int_{A^+_{\tanh^2R}}\int_{\mathrm U(n)}\varphi_{\rho,\mu,v}(u,x)\,du\,dx, 
		\end{equation}
		where $ M_0\in\R_{>0} $ is defined by \eqref{eq:067}.
		In particular, since $ F_{f_{\mu,v}}\in L^1(\Sp_{2n}(\R),V) $ by Lem.\,\ref{lem:041} and Prop.\,\ref{prop:049}\ref{prop:049:2}, the integrals in the inequality \eqref{eq:064} are finite. Thus, since $ M(N)\nearrow1 $ as $ N\to\infty $, by the monotone convergence theorem the integer $ N_0 $ is well-defined, and for every $ N\in\Z_{\geq N_0} $, the inequality \eqref{eq:064} holds, i.e., by \eqref{eq:069}, the set $ C_{\artanh \sqrt{M(N)}} $ satisfies the condition \ref{enum:063:2} of Prop.\,\ref{prop:063}. This finishes the proof of the theorem.
	\end{proof}

	\begin{remark*}
		While computationally demanding, determining the constants $ N_0(\rho,\mu,v) $ of Thm.\,\ref{thm:101} is feasible in some special cases. In \cite[Table 1]{zunar23}, the values $ N_0(\rho,\mu,v) $ are listed for the following values of arguments: $ \rho=\det^m:\GL_n(\C)\to\C^\times $ with $ n\in\left\{1,2\right\} $ and $ m\in\left\{2n+1,2n+2,\ldots,2n+8\right\} $, $ \mu=\det^l $ with $ l\in\left\{0,1,\ldots,12\right\} $, and $ v\neq0 $. 
	\end{remark*}

\bibliographystyle{amsplain}
\linespread{.96}

\end{document}